\documentclass[11pt]{amsart}

\usepackage{amssymb,amsthm}
\usepackage{amsmath}

\newtheorem{thm}{Theorem}
\newtheorem{cor}[thm]{Corollary}
\newtheorem{theorem}{Theorem}[section]
\newtheorem{corollary}[theorem]{Corollary}

\newtheorem{lemma}[theorem]{Lemma}


\raggedbottom

\def\irr#1{{\rm  Irr}(#1)}

\def\phi{{\varphi}}

\begin{document}


\title[A question of Snyder]{Bounding group orders by large character degrees: A question of Snyder}

\author{Mark L. Lewis}

\address{Department of Mathematical Sciences, Kent State University, Kent, OH 44242}

\email{lewis@math.kent.edu}

\keywords{Character Degrees, solvable groups, Camina pairs, Gagola characters, Suzuki $2$-groups}
\subjclass[2000]{Primary 20C15}

\begin{abstract}
Let $G$ be a nonabelian finite group and let $d$ be an irreducible character degree of $G$.  Then there is a positive integer $e$ so that $|G| = d(d+e)$.  Snyder has shown that if $e > 1$, then $|G|$ is bounded by a function of $e$.  This bound has been improved by Isaacs and by Durfee and Jensen.  In this paper, we will show for groups having a nontrivial, abelian normal subgroup that $|G| \le e^4 - e^3$.  Given that there are a number of solvable groups that meet this bound, it is best possible. Our work makes use of results regarding Camina pairs, Gagola characters, and Suzuki $2$-groups.
\end{abstract}

\maketitle

This paper is dedicated in memory of David Chillag.

\section{Introduction}

Throughout this note, $G$ will be a finite nonabelian group.  We write $d$ for the degree of some nonlinear irreducible character degree of $G$.  Our results are true for any choice of $d$, but they are most interesting when $d$ is the maximal irreducible character degree of $G$, so it is little loss if the reader wishes to make that assumption.

We know that $d$ divides $|G|$, so there is an integer $e$ so that $|G| = d (d+e)$.  Since $d^2 < |G|$, we know that $e$ is a positive integer.  Berkovich has shown that $e = 1$ if and only if $G$ is a $2$-transitive Frobenius group (see Theorem 7 of \cite{Berk}).  It is well known that there are $2$-transitive groups of arbitrarily large order, and so, $d$ may be arbitrarily large.  In this note, we will focus on the case when $e > 1$.

Under the hypothesis that $e > 1$, Snyder has proved that $|G| \le (2e)!$ (see \cite{Snyder}). He also showed that if $e = 2$, then $|G| \le 8$ and if $e = 3$, then $|G| \le 54$, and in both of these cases, there exist examples of these orders; hence, the bounds given are best possible for $e = 2$ and $3$.

Isaacs has shown that $|G| \le B e^6$ for some universal constant $B$ and in many cases that $|G| \le e^6 + e^4$ (see \cite{large}).  Finally, Durfee and Jensen have proved in \cite{DuJe} (without using the classification of nonabelian simple groups) that $|G| \le e^6 - e^4$.  When $G$ is solvable and either $e$ is a prime or $e$ is divisible by at least two distinct primes, they prove that $|G| \le e^4 - e^3$.  Hence, the only time it is possible that $G$ is solvable and $|G| > e^4 - e^3$ is when $e$ is a prime power that is not prime.  Notice that when $e = 2$ and $e = 3$, the expression $e^4 - e^3$ yields the bound found by Snyder.

Isaacs also shows that there exists a (solvable) group $G$ for every prime power $q$ of order $q^3 (q-1)$ where $d = q (q-1)$.  It is easy to compute that $e = q$, so $d = e^2 - e$ and $|G| = e^4 - e^3$.  On the other hand, there are no known groups $G$ where $|G| > e^4 - e^3$, so it seems likely that $|G| \le e^4 - e^3$ is the correct bound.  In this paper, we prove this bound for groups with a nontrivial, abelian subgroup.

\begin{thm} \label{thm2}
Let $G$ be a group with a nontrivial, abelian normal subgroup, and let $d$ and $e$ be defined as above.  If $e > 1$, then $d \le e^2 - e$ and $|G| \le e^4 - e^3$.
\end{thm}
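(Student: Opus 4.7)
The strategy combines Ito's theorem with a Clifford-theoretic analysis of $\chi_A$, where $\chi \in \irr{G}$ with $\chi(1) = d$ and $A$ is the given abelian normal subgroup. I first reduce to the case where $\chi$ is faithful: if $K = \ker{\chi} \neq 1$, the quotient $\bar G = G/K$ carries the nonlinear character $\bar\chi$ of degree $d$, so $d^2 \leq |\bar G| = d(d+e)/|K|$, forcing $e \geq d(|K|-1) \geq d$ and hence $d \leq e \leq e(e-1)$ (using $e \geq 2$). So I may assume $\chi$ is faithful, and I then replace $A$ by a minimal normal subgroup of $G$ contained in it; now $A$ is elementary abelian and $A \not\leq \ker{\chi}$.

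Applying Clifford's theorem, $\chi_A = e_0(\lambda_1 + \cdots + \lambda_t)$, where $\{\lambda_i\}$ is the $G$-orbit of some nontrivial $\lambda \in \irr{A}$, the inertia group $T = I_G(\lambda)$ has index $t$ in $G$, and $d = e_0 t$. In the easy sub-case $t = 1$, $\lambda$ is $G$-invariant, so minimality of $A$ gives $\ker{\lambda} = 1$ and $A$ is cyclic of prime order $p$; then $|\chi(g)|^2 = d^2$ on all of $A$, so
\[
d^2 p = \sum_{g \in A}|\chi(g)|^2 \leq \sum_{g \in G}|\chi(g)|^2 = |G| = d(d+e),
\]
giving $e \geq d(p-1) \geq d$ and again $d \leq e \leq e(e-1)$.

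The hard case is $t > 1$, where $\chi = \psi^G$ is induced from some $\psi \in \irr{T}$ with $\psi(1) = d/t$. If $\psi$ is nonlinear, I apply the theorem inductively to $(T, \psi)$: a short calculation shows $e_T = e + d(t-1)/t \geq 2$, and $A$ remains abelian normal in $T$, so the inductive hypothesis yields $\psi(1) \leq e_T^2 - e_T$, from which one recovers the bound on $d$ after some bookkeeping. If instead $\psi$ is linear, then $d = t$, $|T| = d+e$, and $\chi$ is monomial, induced from a linear character of $T$. This sub-case is the main obstacle, as it is extremal: Isaacs' Suzuki-$2$-group examples realize $d = e^2 - e$ precisely here. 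To handle it, I expect to identify $(G, T)$ (or a suitable refinement) as a Camina pair with $\chi$ of Gagola type and invoke the classification of Suzuki $2$-groups, together with orthogonality relations on $G \setminus T$, to conclude $|T| \leq e^2$, i.e.\ $d \leq e^2 - e$. Once this is established, the bound $|G| \leq e^4 - e^3$ follows immediately by monotonicity of $d(d+e)$ in $d$.
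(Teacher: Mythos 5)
Your opening reductions (passing to a faithful $\chi$, and the $t=1$ case via the second orthogonality-type estimate $d^2|A| \le |G|$) are correct, but they dispose only of the trivial configurations; the two remaining sub-cases, which carry all the content, do not go through as described. The inductive step for nonlinear $\psi$ is vacuous: with $\psi(1) = d/t$ one has $e_T = (d+e) - d/t = e + \psi(1)(t-1)$, so for $t \ge 2$ the quantity $e_T^2 - e_T \ge (e+\psi(1))(e+\psi(1)-1)$ already exceeds $\psi(1)$ for purely arithmetic reasons. Hence the inductive conclusion $\psi(1) \le e_T^2 - e_T$ holds automatically and places no constraint whatsoever on $d = t\psi(1)$ in terms of $e$; for instance with $e=2$, $t=2$ it is consistent with arbitrarily large $d$, so no ``bookkeeping'' can recover $d \le e^2 - e$ from it. The difficulty is intrinsic to this way of inducting: $e_T$ grows linearly with $d$, so bounds expressed in $e_T$ cannot be converted into bounds in $e$.

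The sub-case you correctly identify as extremal ($\psi$ linear, $\chi$ monomial) is not proved at all --- ``I expect to identify $(G,T)$ as a Camina pair \dots and invoke the classification of Suzuki $2$-groups'' is a statement of intent, and the intended identification is not the right one: $T$ is the inertia group of $\lambda$, of index $d$ and in general not normal, so $(G,T)$ is not a Camina pair. The structure that actually works (and is what the paper uses) is the pair $(G,N)$ with $N$ the unique minimal normal subgroup, obtained from the Durfee--Jensen domination results under the contrary assumption $d \ge e^2 - e$ (Lemma \ref{gagola 1}); there the stabilizer of $\lambda \in \irr N$ is a Sylow $p$-subgroup, $d = e(|N|-1)$, $e^2 = |G:N|_p/|N|$, and the whole theorem reduces to the inequality $|N|^2 \le |G:N|_p$ of Theorem \ref{thm1}. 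Proving that inequality is the bulk of the paper and requires three separate arguments: Zsigmondy primes, Frobenius complements and the three subgroups lemma for odd $p$; a computation of the full automorphism group of the Suzuki $2$-groups $A(n,\Theta)$ for $p=2$ with $G$ solvable; and ${\rm SL}_2(2^n)$-module dimension arguments for $p=2$ with $G$ nonsolvable (remember the hypothesis is only a nontrivial abelian normal subgroup, not solvability, so the Suzuki-group classification alone cannot suffice). None of this machinery, nor any substitute for it, appears in your proposal, so the proof has a genuine gap exactly where the theorem is hard.
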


Since all solvable groups have a nontrivial, abelian normal subgroup, we obtain the bound for solvable groups.

\begin{cor}
Let $G$ be a solvable group, and let $d$ and $e$ be defined as
above.  If $e > 1$, then $|G| \le e^4 - e^3$.
\end{cor}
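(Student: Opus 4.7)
The plan is to exploit Clifford theory relative to $A$. By Ito's theorem, $d$ divides $[G:A]$, hence $|A|$ divides $d+e$ and in particular $|A| \le d+e$. Fix an irreducible constituent $\lambda \in \irr{A}$ of $\chi_A$, let $T = I_G(\lambda)$, and write $t = [\chi_A, \lambda]$, so that $d = t\cdot [G:T]$ and $\chi = \psi^G$ where $\psi \in \irr{T}$ is the Clifford correspondent, with $\psi(1) = t$. The argument then splits according to the position of $T$ between $A$ and $G$.

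If $T = G$, then $\lambda$ is $G$-invariant, $\chi_A = d\lambda$, and $|\chi(a)| = d$ for every $a \in A$. The orthogonality relation $\sum_{g \in G}|\chi(g)|^2 = |G|$, applied to the sub-sum over $A$ alone, gives $|A|d^2 \le d(d+e)$, so $|A| \le 1 + e/d$; since $|A|\ge 2$, this forces $d \le e$, which is stronger than $d \le e^2 - e$ whenever $e \ge 2$.

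If $T = A$, then $t = 1$, $\chi = \lambda^G$ vanishes on $G\setminus A$, $|A| = d+e$, and $[G:A] = d$. Since $I_G(\lambda) = A$, any element of $G$ centralizing $A$ must fix $\lambda$ and hence lie in $A$; thus $\cent{G}{A} = A$, so $G/A$ embeds faithfully in $\aut{A}$ and acts regularly on the $G$-orbit of $\lambda$. If $G/A$ acts fixed-point-freely on $A\setminus\{1\}$, then $(G,A)$ is a Frobenius group with $d \mid d+e-1$, giving $d \le e-1$. Otherwise $(G,A)$ is a more intricate Camina-type pair, and the classification results on Camina pairs, Gagola characters, and (when $A$ is a $2$-group) Suzuki $2$-groups cited in the abstract are brought to bear to force $d \le e^2-e$. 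This sub-case is the technical heart, since Isaacs' extremal examples $|G| = q^3(q-1)$ live precisely here and the bound is sharp.

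If $A < T < G$, I would argue by induction on $|G|$. One has $|T| = t(d+e)$ and $\psi(1) = t$, so the Snyder parameter for $(T,\psi)$ is $e_T := (d+e) - t$, which satisfies $e_T \ge e > 1$ because $t \le d$; moreover $A$ is still a nontrivial abelian normal subgroup of $T$. When $\psi$ is nonlinear, the induction hypothesis applied to $T$ yields $|T| \le e_T^3(e_T-1)$, and combining with $d = [G:T]\cdot t$ produces the desired inequality after a short calculation. When $\psi$ is linear, $\chi$ is monomial and vanishes off the union of $G$-conjugates of $T$, reducing to a variant of the $T=A$ analysis. The principal obstacle throughout is the Camina/Gagola regime in the $T=A$ sub-case, where the cited structural theorems are indispensable.
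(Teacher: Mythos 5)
Your proposal has two genuine gaps, and they sit exactly where the real work of the theorem lies. First, the inductive step in the case $A < T < G$ does not work: you set $e_T = (d+e) - t$ and apply the inductive bound $|T| \le e_T^4 - e_T^3$, but since $|T| = t(d+e)$ while $e_T$ is of the same order of magnitude as $d+e$ whenever $t$ is small, that inequality is (nearly) vacuous and carries no information relating $d$ to $e$. For example with $t=1$ the conclusion is $d+e \le (d+e-1)^4 - (d+e-1)^3$, which is true for every $d$, so no ``short calculation'' can extract $d \le e^2-e$ from it. Second, in the case $T=A$ your ``otherwise'' branch --- the non-Frobenius situation, which you correctly identify as containing Isaacs' extremal examples --- is not proved at all; you only assert that results on Camina pairs, Gagola characters and Suzuki $2$-groups ``are brought to bear.'' But nothing in your Clifford set-up shows that $(G,A)$ actually is a Camina or Gagola pair: $A$ was an arbitrary nontrivial abelian normal subgroup, it need not be minimal normal, and the inertia-group condition $T=A$ does not give that $\chi$ vanishes on all but two classes. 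In the paper this structure is obtained quite differently: one assumes $d \ge e^2-e$ and invokes Durfee--Jensen's domination theorems (Theorem 5.2 and Lemma 4.2 of their paper) to produce a Gagola character and a minimal normal subgroup $N$ on which $G$ acts transitively; only then does the Camina/Gagola machinery apply, and the actual content is Theorem \ref{thm1}, namely $|N|^2 \le |G:N|_p$, whose proof (split into $p$ odd, $p=2$ solvable via Suzuki $2$-groups and their automorphisms, $p=2$ nonsolvable) occupies most of the paper. Citing that machinery as a black box for your sub-case is not available to you, since it is proved for Gagola pairs, not for the configuration your case analysis produces.

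For comparison, the paper's proof of this corollary is a one-liner: a solvable group has a nontrivial abelian normal subgroup, so Theorem \ref{thm2} applies, and Theorem \ref{thm2} in turn follows from Lemma \ref{gagola 1}, Lemma \ref{gagola 2}, Corollary \ref{gagola 3} and Theorem \ref{thm1}. Your cases $T=G$ (giving $d\le e$ via column orthogonality) and the Frobenius sub-case of $T=A$ (giving $d \le e-1$) are fine as far as they go, but they only dispose of the easy configurations; the remaining two branches are where the bound $d \le e^2-e$ must actually be earned, and as written they are not established.
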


We also are able to make use of the results of Durfee and Jensen to improve the bound for all groups.

\begin{cor} \label{cor1}
Let $G$ be a group with $d$ and $e$ defined as above.  If $e > 1$, then $d < e^2$ and $|G| < e^4 + e^3$.
\end{cor}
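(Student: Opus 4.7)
My plan would be to split into two cases according to whether $G$ has a nontrivial abelian normal subgroup, using Theorem \ref{thm2} in one case and the analysis of Durfee and Jensen in the other.

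First, I would observe that the two conclusions are equivalent, so the whole corollary reduces to showing $d<e^{2}$. Indeed, since $|G|=d(d+e)$, the inequality $d<e^{2}$ yields
\[
|G|=d(d+e)<e^{2}(e^{2}+e)=e^{4}+e^{3},
\]
and conversely $d\ge e^{2}$ would give $|G|\ge e^{2}(e^{2}+e)=e^{4}+e^{3}$. Thus the second conclusion follows automatically from the first.

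Next I would dispose of the case where $G$ has a nontrivial abelian normal subgroup: Theorem \ref{thm2} applies immediately and yields the strictly stronger bound $d\le e^{2}-e<e^{2}$, finishing this case.

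The remaining case is when $G$ has no nontrivial abelian normal subgroup, i.e.\ the solvable radical of $G$ is trivial and every minimal normal subgroup of $G$ is a direct product of isomorphic nonabelian simple groups. This is where I would invoke \cite{DuJe}. Durfee and Jensen's proof of $|G|\le e^{6}-e^{4}$ proceeds via a case distinction on a minimal normal subgroup $M$ of $G$: the worst-case bound $e^{6}-e^{4}$ comes from an elementary abelian $M$, whereas the nonabelian case is handled separately and yields a much tighter estimate (morally because, for groups close to nonabelian simple groups, the maximal irreducible degree $d$ is of the same order of magnitude as $\sqrt{|G|}$, which forces $e=|G|/d-d$ to be large compared with $\sqrt{d}$). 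The plan is to extract from their argument the specific sub-bound for the nonabelian-socle situation and verify that it is bounded above by $e^{4}+e^{3}$ for all relevant $e>1$.

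The main obstacle I anticipate is precisely this last step: locating in \cite{DuJe} the quantitative statement for the nonabelian-minimal-normal-subgroup case, and confirming either directly or by a short check over the small values of $e$ that no potential exception slips through with $d\ge e^{2}$. Once that is done, combining Case 1 (Theorem \ref{thm2}) and Case 2 (Durfee--Jensen) gives $d<e^{2}$ in every group with $e>1$, and the estimate $|G|<e^{4}+e^{3}$ follows from the equivalence recorded in the first paragraph.
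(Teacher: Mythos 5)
Your reduction to showing $d<e^{2}$ is fine, and your Case 1 (nontrivial abelian normal subgroup, via Theorem \ref{thm2}) matches what the paper does implicitly. But your Case 2 contains a genuine gap: you never produce an argument, only a plan to ``extract from \cite{DuJe} the specific sub-bound for the nonabelian-socle situation,'' and no such usable statement is identified. The only unconditional bound from Durfee--Jensen cited in this paper is $|G|\le e^{6}-e^{4}$, which is far weaker than $e^{4}+e^{3}$; indeed the paper explicitly notes that even for simple groups the sharper bound $e^{4}-e^{3}$ is open, so there is no clean ``nonabelian-socle'' estimate of the kind you hope for waiting to be quoted. As written, the proposal proves nothing in the case where $G$ has no nontrivial abelian normal subgroup.

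The step you are missing is that no case distinction is needed at all: the paper argues by contradiction from $d\ge e^{2}$. Under that assumption, Corollary 3.4 of \cite{DuJe} applies to an \emph{arbitrary} group $G$ (no hypothesis on normal subgroups) and produces a character dominating all of $\irr G$, and Lemma 4.1 of \cite{DuJe} then yields a normal subgroup $N$ with $(G,N)$ a Gagola pair. In particular $N$ is elementary abelian, so the assumption $d\ge e^{2}$ itself forces $G$ into the situation of Theorem \ref{thm1}; combined with Corollary \ref{gagola 3} this gives $d\le e^{2}-e<e^{2}$, a contradiction. Thus your ``Case 2 with $d\ge e^{2}$'' simply cannot occur, and the route through a quantitative nonabelian-socle bound is both unavailable and unnecessary.
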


At this time, it is still an open question as to whether there exists any groups where $e^4 - e^3 < |G| < e^4 + e^3$.  We do not have any examples of such groups, but at this time, we have not been able to prove that such a group cannot exist.  By Theorem \ref{thm2}, we know that if such a group does exist, then all the normal subgroups of $G$ must be nonabelian.  An important subcase, which we believe is still open is whether the bound $|G| \le e^4 - e^3$ can be proved when $G$ is a simple group.

Like \cite{DuJe}, \cite{large}, and \cite{Snyder}, we consider groups first studied by Gagola.  Gagola studied groups that have an irreducible character that vanish on all but two conjugacy classes.  He proved that if $G$ has a character $\chi \in \irr G$ so that $\chi$ vanishes on all but two conjugacy classes of $G$.  We will say that $\chi$ is a {\it Gagola character}.  Gagola proved that such Gagola characters are unique. Furthermore, he proved that $G$ has a unique minimal normal subgroup $N$.  He proved that $N$ is an elementary abelian $p$-group for some prime $p$.  We say $(G,N)$ is a {\it $p$-Gagola pair} if $G$ has a Gagola character and $N$ is the unique minimal normal subgroup of $G$ and is a $p$-group.  The main result of this paper is the following theorem.

\begin{thm} \label{thm1}
Let $(G,N)$ be a $p$-Gagola pair some prime $p$.  If $P$ is a Sylow $p$-subgroup of $G$, then $d \le e^2 - e$ and $|N|^2 \le |P:N| = |G:N|_p$.
\end{thm}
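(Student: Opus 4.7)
The plan is to extract rigid structural information about the Sylow $p$-subgroup $P$ from the restricted Gagola character $\chi_P$, and then bound $|N|$ via a Chevalley--Warning argument on a pencil of alternating forms.

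First I would record the standard Gagola identities. Orthogonality of $\chi$ with $1_G$ together with $[\chi,\chi]=1$ force $\chi(n) = -e$ on $N \setminus \{1\}$, giving $d = e(|N|-1)$ and $|G| = e^2|N|(|N|-1)$. Writing $e_p, e_{p'}$ for the $p$- and $p'$-parts of $e$ and using that $|N|-1$ is coprime to $p$, this yields $|P:N| = e_p^2 = |G:N|_p$, so the theorem is equivalent to the single inequality $|N| \le e_p$ (which also forces $d = e(|N|-1) \le e^2-e$). A direct computation gives $\chi_N = e\sum_{\lambda \ne 1_N}\lambda$, and since $\chi$ is irreducible Clifford's theorem makes $G$ transitive on $\irr N \setminus \{1_N\}$, an orbit whose size $|N|-1$ is coprime to $p$; consequently $P$ fixes every $\lambda \in \irr N$ and $N \le Z(P)$.

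Next I would analyze $\chi_P = \sum_\tau m_\tau\tau$ by grouping constituents over each nontrivial $\lambda \in \irr N$. For such $\lambda$, Frobenius reciprocity gives $\sum_{\tau \text{ over } \lambda}\tau(1)^2 = |P:N| = e_p^2$, matching against $\chi_N$ gives $\sum_{\tau \text{ over } \lambda} m_\tau \tau(1) = e$, and the Gagola values of $\chi_P$ yield $[\chi_P,\chi_P] = e_{p'}^2(|N|-1)$. Cauchy--Schwarz per $\lambda$ gives $\sum_{\tau \text{ over } \lambda} m_\tau^2 \ge e_{p'}^2$; summing over the $|N|-1$ nontrivial $\lambda$ matches the global total and forces equality throughout. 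Combined with $\tau(1) \mid e_p$ and the integrality of each $m_\tau$, this pins the decomposition: for each $\lambda \ne 1_N$ there is a unique $\tau_\lambda \in \irr P$ lying over $\lambda$, with $\tau_\lambda(1) = e_p$ and $m_{\tau_\lambda} = e_{p'}$, and in particular $\tau_\lambda(1)^2 = |P:N|$ (full ramification).

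Third, I would translate this full-ramification data into group-theoretic structure. From $\tau_\lambda(1)^2 = |P:Z(\tau_\lambda)|$ together with $N \le Z(\tau_\lambda)$ we get $Z(\tau_\lambda) = N$, hence $Z(P) = \bigcap_\lambda Z(\tau_\lambda) = N$. Since $\ker\tau_\lambda \le Z(\tau_\lambda) = N$, we have $\ker\tau_\lambda = \ker\lambda$, so $P/\ker\lambda$ is a $p$-group of order $p\cdot e_p^2$ with center of order $p$ admitting a faithful irreducible character of degree $e_p$; this forces $P/\ker\lambda$ to be extraspecial, so $(P/\ker\lambda)' = N/\ker\lambda$ and $P'\ker\lambda = N$ for every $\lambda \ne 1_N$. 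As $\ker\lambda$ runs through every hyperplane of $N$ this forces $P' = N$, so $P/N$ is abelian; bilinearity of the commutator into the elementary abelian $N$ then makes $P/N$ elementary abelian, and the commutator descends to an alternating $\mathbb{F}_p$-bilinear map $c: P/N \times P/N \to N$ such that $\lambda \circ c$ is non-degenerate for every $\lambda \ne 1_N$.

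Finally, choosing a basis of $\irr N$ produces a pencil $\sum_i a_i(\lambda_i \circ c)$ of alternating forms on $P/N$ whose every nonzero member is non-degenerate. In particular $\dim_{\mathbb{F}_p}(P/N)$ is even, and the Pfaffian of this pencil is an anisotropic homogeneous polynomial of degree $\tfrac12\dim(P/N)$ in $\dim N$ variables over $\mathbb{F}_p$. Chevalley--Warning then forces $\dim(P/N) \ge 2\dim N$, i.e., $|P:N| \ge |N|^2$, and together with $|P:N| = e_p^2$ this gives $|N| \le e_p \le e$, completing the proof. I expect the main obstacle to be the third step: carefully deducing the extraspecial structure of each $P/\ker\lambda$ from the uniqueness and degree of $\tau_\lambda$, since that is what gives $P' = N$ with $P/N$ elementary abelian and cleanly sets up the bilinear form for the final Pfaffian count.
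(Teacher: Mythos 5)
Your opening reduction is fine and matches the paper: orthogonality gives $d=e(|N|-1)$ and $|G|=e^2|N|(|N|-1)$, so $|P:N|=|G:N|_p$ and the whole theorem reduces to $|N|\le e$ (this is Lemma \ref{gagola 2} and Corollary \ref{gagola 3}); and your closing Pfaffian/Chevalley--Warning step is a correct proof of the class-two case, which the paper simply imports from MacDonald as Lemma \ref{abel pair}. The fatal problem is the pivotal claim in your second step. From transitivity of $G$ on $\irr{N}\setminus\{1_N\}$, an orbit of $p'$-size, you may only conclude that each $\lambda$ is fixed by \emph{some} Sylow $p$-subgroup depending on $\lambda$ (equivalently, that your fixed $P$ fixes at least one $\lambda$), not that $P$ fixes every $\lambda$. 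Indeed, by Gagola's Corollary 2.3 the stabilizer of each $1\ne x\in N$ is \emph{exactly} a Sylow $p$-subgroup, so $N\le Z(P)$ would force all of these stabilizers to contain, hence equal, $P$; since they form a full conjugacy class of Sylow subgroups, $P$ would be normal in $G$. Gagola pairs that are not $p$-closed exist --- they are precisely the cases the paper fights hardest against (Lemma \ref{aff}, the nonsolvable $p=2$ case, and the Suzuki $2$-group configuration of Theorem \ref{2pairclass}). For such $G$, only the $N_G(P)$-orbit of one $\lambda$ is $P$-invariant and fully ramified in $P$; the per-$\lambda$ decomposition of $\chi_P$, the conclusion $Z(P)=N$, and the everywhere-nondegenerate pencil of alternating forms on $P/N$ are therefore unavailable.

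Even in the $p$-closed case, where $N\le Z(P)$ does hold and every nontrivial $\lambda$ is fully ramified (so $(P,N)$ is a Camina pair), your third step contains an unjustified jump: a $p$-group with center of order $p$ admitting a faithful irreducible character of degree equal to the square root of the index of its center (a group of central type) need not be extraspecial; its derived subgroup can properly contain its center. So $P'=N$ and elementary abelianness of $P/N$ do not follow, and without class-two structure the commutator does not descend to the $N$-valued bilinear map your Pfaffian count requires. This is exactly where the real content of the theorem lies: the paper must deal with subgroups of nilpotence class three when $p$ is odd (Lemma \ref{class3} and the inverting involution), with Suzuki $2$-groups $A(n,\Theta)$ and their full automorphism groups in the solvable $p=2$ case, and with Gagola's CFSG-based ${\rm SL}_2(q)$ results and module computations in the nonsolvable $p=2$ case. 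If class-two structure at $P$ came for free from the local character-theoretic data, Lemma \ref{abel pair} would finish the proof in a page and none of that machinery would be needed --- a reliable sign that the purely local argument at $P$ cannot succeed as stated.
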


The proof of this result splits into three cases: when $p$ is odd. when $p = 2$ and $G$ is solvable, and when $p = 2$ and $G$ is nonsolvable.  We have to work the hardest to prove the result when $p = 2$ and $G$ is solvable.  In particular, when $p = 2$ and $G$ is solvable, we need to study in detail Suzuki $2$-groups, and we compute the full automorphism group of a Suzuki $2$-group, which to our knowledge has not been published before.

Durfee and Jensen prove that if $G$ has a nontrivial, abelian normal subgroup, then $G$ has a normal subgroup $N$ so that $(G,N)$ is a $p$-Gagola pair for some prime $p$.  Thus, if there exists a group $G$ with $e^4 - e^3 > |G|$, then $d > e^2 - e$ and all the nontrivial, normal subgroups of $G$ are nonabelian.

We would like to particularly thank David Gluck and Stephen Gagola, Jr. for a number of useful conversations while we were preparing this this paper.

\section{Gagola pairs}

We first use the results in \cite{DuJe} to reduce the initial
question to a question regarding Gagola pairs.  Since $|G| = d
(d+e)$, to prove $|G| \le e^4 - e^3$ it suffices to prove that $d \le e^2 - e$ and to prove $|G| \le e^4 + e^3$, it suffices to prove that $d \le e^2$.  To prove this, we need to introduce some terminology from \cite{DuJe}.  If $\chi, \psi \in \irr G$, then we say $\chi$ {\it dominates} $\psi$ if $\psi \chi = \psi (1) \chi$.  Note that since solvable groups have a nontrivial, abelian normal subgroup, this next lemma applies when $G$ is solvable.

\begin{lemma} \label{gagola 1}
If $G$ has a nontrivial, abelian normal subgroup and if $e > 1$ and $d \ge e^2 - e$ where $d$ and $e$ are defined as above, then $G$ has a Gagola character $\chi$ with $\chi (1) = d$.
\end{lemma}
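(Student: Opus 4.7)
The plan is to show that the given $\chi \in \irr{G}$ of degree $d$ is itself a Gagola character; by Gagola's uniqueness theorem for such characters this immediately yields the conclusion $\chi(1) = d$. Let $A$ be a minimal nontrivial abelian normal subgroup of $G$ and write $\chi_A = e_\chi\sum_{i=1}^t \lambda_i$ by Clifford's theorem, where the $\lambda_i$ form a single $G$-orbit on $\irr{A}$ and $d = e_\chi t$. Column orthogonality combined with $|G| = d(d+e)$ gives the running identity $\sum_{g \neq 1}|\chi(g)|^2 = de$.

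First I would dispatch the case $A \leq \ker{\chi}$. Here $\chi$ descends to a nonlinear irreducible character of the nontrivial group $G/A$, so $d^2 < |G/A| = d(d+e)/|A|$, which simplifies to $d(|A|-1) < e$. With $|A| \geq 2$ this forces $d < e$, contradicting $d \geq e^2 - e \geq e$ (using $e \geq 2$). Hence every $\lambda_i$ is nontrivial, and the analogous orthogonality computation on $A$ gives $\sum_{a \in A \setminus \{1\}} |\chi(a)|^2 = d^2(|A|/t - 1)$; comparing with the global bound $de$ yields $|A|/t \leq 1 + e/d \leq e/(e-1)$, which forces $|A| \geq e$ and leaves very little slack in the Clifford decomposition.

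The heart of the argument is to convert this tight Clifford control into a Gagola conclusion via the domination machinery of Durfee--Jensen \cite{DuJe}. Recall that $\chi$ dominates $\psi \in \irr{G}$ when $\psi\chi = \psi(1)\chi$, which is equivalent to saying that the support of $\chi$ is contained in $\ker{\psi}$. Using the character-sum identity $\sum_{\psi \neq \chi,\, \psi(1) > 1} \psi(1)^2 = de - |G:G'|$ together with the pointwise estimate on $|\chi(a)|$ from the previous step, I would argue that $\chi$ dominates a rich enough collection of $\psi \in \irr{G}$ for the intersection $N = \bigcap_\psi \ker{\psi}$ (over the dominated $\psi$) to consist of exactly two $G$-conjugacy classes $\{1\}$ and $C$. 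The support of $\chi$ is then contained in $N = \{1\} \cup C$, and since $\sum_g |\chi(g)|^2 = |G| > d^2$ prevents the support from being $\{1\}$ alone, we must have support$(\chi) = \{1\} \cup C$, so $\chi$ is a Gagola character.

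The main obstacle will be this final domination step, which must extract the right $\psi$'s under the tight threshold $d \geq e^2 - e$ (the extremal examples of Isaacs noted in the introduction attain equality, so the argument cannot afford any numerical slack). I expect the proof to combine the Clifford-theoretic bound $|A|/t \leq e/(e-1)$ with the specific domination lemmas of \cite{DuJe} in a careful accounting that balances $|G:G'|$ against the degrees and supports of the other nonlinear characters.
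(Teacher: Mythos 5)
Your preliminary computations are fine (the identity $\sum_{g\neq 1}|\chi(g)|^2 = de$, the exclusion of $A \le \ker{\chi}$, and the Clifford estimate $|A|/t \le 1 + e/d$ all check out), but the proof has a genuine gap exactly where you flag "the main obstacle": nothing in the proposal actually establishes that $\chi$ dominates any other irreducible character, let alone that the intersection of kernels of the dominated characters consists of $\{1\}$ together with a single conjugacy class. The phrase "I would argue that $\chi$ dominates a rich enough collection of $\psi$" is a placeholder for the entire content of the lemma. This is precisely the heavy lifting done in Durfee--Jensen: their Theorem 5.2 shows, under the hypotheses that $G$ has a nontrivial abelian normal subgroup, $e>1$, and $d \ge e^2-e$, that there is a character dominating \emph{all} other irreducible characters, and their Lemma 4.2 then produces a minimal normal subgroup $N$ with $\chi$ vanishing off $N$ and $G$ acting transitively on $N\setminus\{1\}$, which is what collapses the support of $\chi$ to two classes. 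The paper's proof of this lemma is nothing more than a citation of those two results (plus Lemma 2.1 of the same paper to get $\chi(1)=d$); your plan would have to reprove them, and the numerical bookkeeping you propose ($\sum_{\psi\neq\chi,\ \psi(1)>1}\psi(1)^2 = de - |G{:}G'|$ together with the pointwise bounds on $|\chi(a)|$) is not by itself sharp enough to force domination -- indeed the tightness you correctly note at $d = e^2-e$ means any such counting argument must use the abelian normal subgroup in an essential structural way, not merely through the inequality $|A| \ge e$.

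A secondary, smaller issue: you assume the character you start with (an arbitrary $\chi$ of degree $d$) is itself the Gagola character. The statement only asserts that \emph{some} Gagola character of degree $d$ exists; the paper's route produces a dominating character first and then identifies its degree as $d$ via Lemma 2.1 of Durfee--Jensen. If $d$ is not the maximal degree this distinction matters for how the argument is organized, so you should either restrict to the maximal-degree case explicitly or follow the existence-first order of the cited results.
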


\begin{proof}
We may apply Theorem 5.2 of \cite{DuJe} to see that $G$ has a
character $\chi \in \irr G$ which dominates all the other
irreducible characters of $G$ and by Lemma 2.1 of \cite{DuJe}, $\chi (1) = d$. By Lemma 4.2 of \cite{DuJe}, we know that $G$ has a minimal normal subgroup $N$ such that $\chi$ vanishes off of $N$ and $G$ acts transitively on $N \setminus \{ 1 \}$.  This implies that $\chi$ vanishes on all but two conjugacy classes of $G$, and so, $\chi$ is a Gagola character.
\end{proof}

When we have a Gagola pair, we can compute $d$ and $e$ in terms of a Sylow subgroup of $G$.

\begin{lemma} \label{gagola 2}
Let $(G,N)$ be a $p$-Gagola pair, and let $P$ be a Sylow
$p$-subgroup of $G$.  If $\chi$ is the associated Gagola character and $d = \chi(1)$, then $e^2 = |P:N|$ and $d = e(|N| - 1)$.
\end{lemma}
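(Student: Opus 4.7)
The plan is to read off both $d$ and $|G|$ directly from the two-class structure of $\chi$, and then to analyze a Clifford correspondent in order to pin down $|P:N|$. Since $\chi$ vanishes off $\{1\}\cup(N\setminus\{1\})$ and is constant (say, equal to $a$) on the single $G$-class $N\setminus\{1\}$, the identities $[\chi,1_G]=0$ and $[\chi,\chi]_G=1$ yield
\[
d+(|N|-1)a=0 \qquad\text{and}\qquad d^{2}+(|N|-1)a^{2}=|G|.
\]
Solving these gives $a=-d/(|N|-1)$ and $|G|=d^{2}|N|/(|N|-1)$. Combining with $|G|=d(d+e)$ produces $d=e(|N|-1)$, which is the first claim, and substituting back yields $|G|=|N|(|N|-1)e^{2}$.

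Since $N$ is a normal $p$-subgroup we have $N\le P$, and since $|N|-1$ is coprime to $p$, the $p$-part of $|G:N|=(|N|-1)e^{2}$ equals the $p$-part of $e^{2}$; in other words $|P:N|=(e^{2})_{p}$. Thus the second claim reduces to showing $e$ is a power of $p$. To set this up I would use Clifford theory: by Clifford's theorem applied to $\chi_{N}$ (whose non-trivial constituents each appear with multiplicity $e$, as an inner-product calculation shows, and therefore form a single $G$-orbit), $G$ is transitive on $\irr{N}\setminus\{1_N\}$. Fix $\lambda\in\irr{N}\setminus\{1_N\}$, let $T=T_{G}(\lambda)$, so $|G:T|=|N|-1$, and let $\psi\in\irr{T}$ be the Clifford correspondent with $\psi^{G}=\chi$; then $\psi(1)=e$ and $\psi_{N}=e\lambda$. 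Counting orders gives $|T|=|N|e^{2}$, and the identity
\[
1=[\psi,\psi]_{T}=\frac{|N|e^{2}+\sum_{t\in T\setminus N}|\psi(t)|^{2}}{|T|}
\]
forces $\psi$ to vanish on $T\setminus N$.

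Now the main step: for any prime $q\neq p$, let $Q$ be a Sylow $q$-subgroup of $T$. Since $N$ is a $p$-group, $Q\cap N=1$, so every non-identity element of $Q$ lies outside $N$, where $\psi$ vanishes. Hence $\psi_{Q}=(e/|Q|)\rho_{Q}$, where $\rho_{Q}$ is the regular character of $Q$; for this to be a genuine character, $|Q|$ must divide $e$. But $|Q|$ equals the full $q$-part of $|T|=|N|e^{2}$, namely $(e_{q})^{2}$, so $(e_{q})^{2}\mid e$, which forces $e_{q}=1$. Thus $e$ has no prime divisor other than $p$, and we conclude $|P:N|=(e^{2})_{p}=e^{2}$.

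I expect the only subtle point to be the vanishing of $\psi$ off $N$: everything hinges on the equality case of the $[\psi,\psi]_{T}$ inequality, which follows immediately once the order $|T|=|N|e^{2}$ is known from the index computation. Once that is in hand, the Sylow-$q$ argument forcing $e$ to be a $p$-power is routine and requires no appeal to structure theorems or Gagola's deeper results.
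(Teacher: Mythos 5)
Your argument is correct, but it takes a genuinely different route from the paper. The paper simply quotes Gagola's structural results: his Lemma 2.2 gives that the stabilizer of a constituent $\lambda$ of $\chi_N$ is a Sylow $p$-subgroup $P$ and that $\lambda$ is fully ramified with respect to $P/N$, and his Lemma 2.1 and Corollary 2.3 give transitivity on $N\setminus\{1\}$ with $P$ a point stabilizer; the identity $e=\sqrt{|P:N|}$ then falls out of a three-line arithmetic computation with $|G|=d(d+e)$. You instead work almost from scratch: using only the two-class support of $\chi$ (so that $\chi$ is supported on $\{1\}\cup(N\setminus\{1\})$, which is Gagola's Lemma 2.1, and that $N$ is abelian so its characters are linear), the first and second orthogonality relations give $d=e(|N|-1)$ and $|G|=e^{2}|N|(|N|-1)$; then the Clifford correspondent $\psi$ over $\lambda$ has degree $e$, satisfies $\psi_N=e\lambda$, and is forced by the equality case of $[\psi,\psi]_T=1$ to vanish off $N$, after which the classical restriction-to-a-Sylow-$q$-subgroup divisibility trick shows $e_q=1$ for every prime $q\neq p$, so $e$ is a $p$-power and $|P:N|=|G:N|_p=e^{2}$. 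In effect you re-prove the portion of Gagola's Lemma 2.2 that the paper cites — your computation shows $|T_G(\lambda)|=e^{2}|N|$ is a $p$-power, so the inertia group is a full Sylow $p$-subgroup and $\lambda$ is fully ramified there — rather than invoking it. What each approach buys: the paper's proof is shorter given the references, while yours is essentially self-contained (beyond the basic facts that the nonvanishing class is $N\setminus\{1\}$ and $N$ is elementary abelian, which both proofs presuppose) and recovers Gagola's inertia-group statement as a byproduct. The only slight imprecision is rhetorical: the constituents of $\chi_N$ form a single $G$-orbit by Clifford's theorem itself; the multiplicity computation is what then shows the orbit has size $|N|-1$, giving $|G:T|=|N|-1$, not the other way around.
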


\begin{proof}
Let $\chi \in \irr G$ be the Gagola character for $G$.  Let $\lambda \in \irr N$ be a constituent of $\chi_N$.  By Lemma 2.2 of \cite{Gagola}, we may choose $\lambda$ so that $P$ is the stabilizer of $\lambda$, and it is shown in that lemma that $\lambda$ is fully-ramified with respect to $P/N$.  Let $b$ be the integer so that $b^2 = |P:N|$.  We have $\chi (1) = b |G:P|$.  By Corollary 2.3 of \cite{Gagola}, we know that $P$ is a point stabilizer in the action of $G$ on $N \setminus \{ 1 \}$, and from Lemma 2.1 of \cite{Gagola}, we know that $G$ acts transitively on $N \setminus \{ 1 \}$.  By the Fundamental Counting Principle, we have $|G:P| = |N| - 1$, and so, $\chi (1) = b(|N| - 1)$.

We now need to show that $b = e$.  We have
$$
|G| = |G:P| |P| = (|N| - 1)|P:N||N| = d(d+e).
$$
Dividing by $d$, we obtain
$$
d + e = \frac {(|N| - 1)|P:N||N|}{b(|N|-1)} = \frac {|P:N|}b |N| = b|N|,
$$
since $|P:N| = b^2$.  Thus, $e = b|N| - b (|N| - 1) = b$, as
desired.
\end{proof}

We next prove that the first conclusion of Theorem \ref{thm1} is a consequence of the second conclusion.

\begin{corollary} \label{gagola 3}
Let $(G,N)$ be a $p$-Gagola pair, and let $P$ be a Sylow
$p$-subgroup of $G$.  If $|P:N| \ge |N|^2$, then $d \le e^2 - e$.
\end{corollary}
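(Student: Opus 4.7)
The plan is to deduce the conclusion directly from Lemma \ref{gagola 2}, which already expresses both $d$ and $e$ in terms of the Sylow data of the $p$-Gagola pair $(G,N)$. Since there is very little room to maneuver, I do not anticipate a real obstacle here; the corollary is essentially an arithmetic consequence of that lemma.

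First I would invoke Lemma \ref{gagola 2} to rewrite the two quantities of interest: we have $e^2 = |P:N|$ and $d = e(|N|-1)$. The hypothesis $|P:N| \ge |N|^2$ then translates into $e^2 \ge |N|^2$. Since both $e$ and $|N|$ are positive integers, taking the positive square root gives the key inequality
\[
e \ge |N|.
\]

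Next I would substitute this inequality into the expression for $d$. Using $d = e(|N|-1)$ and replacing $|N|$ by the upper bound $e$, we obtain
\[
d \;=\; e(|N|-1) \;\le\; e(e-1) \;=\; e^2 - e,
\]
which is exactly the desired conclusion. Notice that the step $e \ge |N|$ is where all the content of the hypothesis is used; the rest is a one-line substitution, so no further case analysis or auxiliary lemmas are needed. This also confirms the slogan preceding the corollary, namely that the first conclusion of Theorem \ref{thm1} is a formal consequence of the second, since the bulk of the work in Theorem \ref{thm1} will go into establishing $|N|^2 \le |P:N|$, after which the present corollary delivers $d \le e^2 - e$ for free.
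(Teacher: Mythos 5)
Your argument is correct and is essentially identical to the paper's proof: both apply Lemma \ref{gagola 2} to get $e^2 = |P:N|$ and $d = e(|N|-1)$, deduce $|N| \le e$ from the hypothesis, and substitute to obtain $d \le e(e-1) = e^2 - e$. Nothing further is needed.
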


\begin{proof}
By Lemma \ref{gagola 2}, we have $d = e (|N| - 1)$ and $e^2 = |P:N|$.  Assuming $|N|^2 \le |P:N|$ implies that $|N|^2 \le e^2$, and so, $|N| \le e$.  This yields $d \le e(e - 1) = e^2 - e$, as desired.
\end{proof}

We now prove that Theorem \ref{thm2} is a corollary to Theorem \ref{thm1}.

\begin{proof}[Proof of Theorem \ref{thm2}]
By Lemma \ref{gagola 1}, we know since $G$ has a nontrivial, abelian normal subgroup that it has a Gagola character.  Hence, there is a normal $p$-subgroup $N$ so that $(G,N)$ is a $p$-Gagola pair.  By Theorem \ref{thm2}, we know that $|G:N|_p \ge |N|^2$, and applying Corollary \ref{gagola 3}, we obtain $d \le e^2 - e$.  We then obtain $|G| \le (e^2 - e)((e^2 - e) + e) = (e^2 - e)e^2 = e^4 - e^3$.
\end{proof}

With this, we can use the results of Durfee and Jensen to prove Corollary \ref{cor1}.

\begin{proof}[Proof of Corollary \ref{cor1}]
Suppose $d \ge e^2$.  In Corollary 3.4 of \cite{DuJe}, Durfee and Jensen prove that there is a character $\chi \in \irr G$ that dominates all the other characters in $\irr G$.  We then use Lemma 4.1 of \cite{DuJe} to see that $G$ has a normal subgroup $N$ so that $(G,N)$ is a Gagola pair.  We then apply Theorem \ref{thm1} and Corollary \ref{gagola 3} to see that $d \le e^2 - e$.  Since $e > 1$, this contradicts $d \ge e^2$.  Therefore, we conclude that $d < e^2$ and $|G| < e^2 (e^2 + e) = e^4 + e^3$.
\end{proof}

Therefore, for the rest of this paper, we will be concerned with proving the second conclusion of Theorem \ref{thm1} that if $(G,N)$ is a $p$-Gagola pair, then $|G:N|_p \ge |N|^2$.

It turns out that Gagola pairs are examples of a more general
construction that has been studied with some depth.  Let $G$ be a group with a normal subgroup $N$ with $1 < N < G$. We say that $(G,N)$ is a Camina pair if for every $g \in G \setminus N$, the conjugacy class of $g$ contains $gN$. Camina pairs have been studied in a number of different places. There are a number of different conditions that are equivalent to being Camina pairs.  For example, $(G,N)$ is a Camina pair if and only if $|C_G (g)| = |C_{G/N} (gN)|$ for all $g \in G \setminus N$.   In addition $(G,N)$ is a Camina pair if and only if for every every pair of elements $g \in G$ and $x \in N$, there exists an element $y \in G$ so that $[g,y] = x$.  Also, $(G,N)$ is a Camina pair if and only if there is a unique character in $\irr {G \mid \nu}$ for each nonprincipal character $\nu \in \irr N$. Based on this last condition, it follows that if $(G,N)$ is a Gagola pair, then $(G,N)$ is a Camina pair.


%
%

We start with a well-known result regarding Camina pairs where $G$ is a $p$-group and $G/N$ is abelian.

\begin{lemma}\label{abel pair}
If $(G,N)$ is a Camina pair where $G$ is a $p$-group for some prime $p$ and $G/N$ is abelian, then $|G:N| \ge |N|^2$.
\end{lemma}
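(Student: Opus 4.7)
Since $G/N$ is abelian, $G' \le N$, and the Camina condition---that for each $g \in G \setminus N$ and $x \in N$ there is $y \in G$ with $[g,y] = x$---forces $G' = N$ and $[g,G] = N$ for every $g \in G \setminus N$. In particular $y \mapsto [g,y]$ is a surjection $G \twoheadrightarrow N$ whose fibers are the cosets of $C_G(g)$, so $|C_G(g)| = |G:N|$. The commutator descends to a biadditive, alternating form $\beta : (G/N)\times(G/N) \to N/[N,G]$, and the Camina condition translates to: $\beta(\bar g,\cdot)$ surjects onto $N/[N,G]$ for every $\bar g \ne 0$ in $G/N$.

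The plan is to reduce to the case where $N$ is elementary abelian and central in $G$, and then to invoke a geometric inequality on alternating forms. Replacing $G$ by $G/[N,G]$ preserves the Camina $p$-pair structure and places $N/[N,G]$ in the center of the quotient; replacing further by $G/([N,G]\Phi(N))$ also makes $N$ elementary abelian. In this class-$2$ reduction, biadditivity of $\beta$ combined with $W := N$ being $p$-torsion forces $V := G/N$ to be $p$-torsion as well: if $pv \ne 0$ for some $v$, then $\beta(pv,\cdot) = p\beta(v,\cdot) = 0$ would contradict surjectivity. Hence both $V$ and $W$ are $\mathbb F_p$-vector spaces and $\beta$ is $\mathbb F_p$-bilinear alternating with $\beta(\bar g,\cdot): V \to W$ surjective for every $\bar g \ne 0$.

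The core geometric step is the inequality $\dim V \ge 2\dim W$ for such a form. For each nonzero $\lambda \in W^\ast$, the composition $\lambda \circ \beta$ is a non-degenerate alternating form on $V$: a nontrivial radical element $\bar g$ would satisfy $\lambda(\beta(\bar g, V)) = 0$, contradicting surjectivity of $\beta(\bar g,\cdot)$ onto $W$. Hence $\lambda \mapsto \lambda\circ\beta$ embeds $W^\ast$ into $\wedge^2 V^\ast$ with nonzero image lying in the non-degenerate locus. The Pfaffian, a polynomial of degree $\dim V/2$ on $\wedge^2 V^\ast$, then has no nontrivial zero on this subspace; by the Chevalley--Warning theorem, a polynomial of degree $\dim V/2$ on an $\mathbb F_p$-vector space of dimension exceeding $\dim V/2$ must have a nontrivial zero, so $\dim W \le \dim V/2$, i.e., $|W|^2 \le |V|$.

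The main obstacle I anticipate is lifting the conclusion back to the original $G$: the quotient argument a priori yields only $|G:N| \ge |N/([N,G]\Phi(N))|^2$, whereas we want $|G:N| \ge |N|^2$. Closing the gap requires showing that in a Camina $p$-pair with abelian top quotient one automatically has $[N,G] = \Phi(N) = 1$, i.e., $N$ is already elementary abelian and central. I expect this structural fact to follow from the character-theoretic reformulation of Camina (uniqueness of $\chi \in \mathrm{Irr}(G\mid\nu)$ for each nonprincipal $\nu \in \mathrm{Irr}(N)$, together with the attendant ramification and degree constraints $\chi(1)^2 = |G:N|$ in the class-$2$ case), and this is where the real work lies.
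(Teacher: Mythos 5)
Your opening moves are correct and match the paper's: from $G/N$ abelian you get $G' \le N$, and the Camina condition forces $N \le G'$, so $G' = N$. At that point the paper simply quotes Theorem 3.2 of Macdonald's paper \cite{MacD1}, which gives $|N|^2 \le |G:N|$ for exactly this situation; your attempt to replace that citation with a self-contained argument is where the trouble lies. The part you do carry out is fine and rather elegant: in the quotient $\bar G = G/([N,G]\Phi(N))$ the commutator induces an $\mathbb F_p$-bilinear alternating map $\beta$ on $\bar G/\bar N$ with values in $\bar N$, the Camina condition makes every $\beta(\bar g,\cdot)$ surjective, and the Pfaffian/Chevalley--Warning argument correctly yields $\dim \bar N \le \tfrac12 \dim (\bar G/\bar N)$. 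But, as you yourself note, this only proves $|G:N| \ge |N/([N,G]\Phi(N))|^2$, not $|G:N| \ge |N|^2$.

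The gap you defer to "the real work" cannot be closed along the route you propose. Your plan is to show that the hypotheses force $[N,G] = \Phi(N) = 1$, i.e.\ that $N$ is central and elementary abelian. Since $G' = N$, the hypothesis of the lemma is exactly that $G$ is a Camina $p$-group, and $[N,G] = \gamma_3(G)$; so $[N,G]=1$ is the assertion that every Camina $p$-group has nilpotence class at most $2$. That is false: Camina $p$-groups of class $3$ exist (Macdonald constructs examples in \cite{MacD1}, and the Dark--Scoppola theorem that the class is at most $3$ is sharp), and for those $N = G'$ is not central, so no amount of character-theoretic ramification bookkeeping will produce $[N,G]=1$. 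Consequently your reduction genuinely loses the factor $|[N,G]\Phi(N)|^2$, and the class-$3$ (and non-exponent-$p$) configurations must be handled by a separate argument -- which is precisely what Macdonald's Theorem 3.2, invoked by the paper, supplies. If you want a self-contained proof, your bilinear-form argument settles the case $N \le Z(G)$ with $N$ elementary abelian, but you still owe an argument (not merely a hoped-for structural fact) covering the remaining Camina $p$-groups with $G'=N$ noncentral.
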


\begin{proof}
Since $G/N$ is abelian, we have $G' \le N$.  On the other hand, because $(G,N)$ is a Camina pair, we know that every element in $N$ lies in $G'$, so $G' = N$.  Under this hypothesis, it is proved in Theorem 3.2 of \cite{MacD1} that $|N|^2 \le |G:N|$.
\end{proof}

This next result should be compared with Theorem 6.2 of \cite{Gagola}.  In particular, we generalize a result regarding Gagola pairs to Camina pairs.

\begin{lemma} \label{comm}
Let $(G,N)$ be a Camina pair with $N$ a $p$-group for some prime $p$, and let $H$ be a $p'$-subgroup of $G$ such that ${\bf O}_p (G)H$ is normal in $G$.  If $G$ is not a Frobenius group with Frobenius kernel $N$, then $N < [O_p (G), H]$.
\end{lemma}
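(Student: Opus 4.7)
The plan is to establish $N \le [P,H]$ unconditionally (with $P = O_p(G)$ and $K := [P,H]$) and then to strengthen this to a strict inclusion by showing that equality $N = K$ forces $G$ to be Frobenius with kernel $N$.

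A preliminary observation is that $K \trianglelefteq G$. Since $PH \trianglelefteq G$, for every $g \in G$ the conjugate $H^g$ is a $p$-complement of $PH$; Schur--Zassenhaus then gives $H^g = H^p$ for some $p \in P$. Combined with the fact that $K \trianglelefteq PH$ (a short commutator calculation using the identities $[ab,c]=[a,c]^b[b,c]$ and $[a,bc]=[a,c][a,b]^c$), this yields $K^g = [P, H^p] = K^p = K$.

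To prove $N \le K$, I would pass to the quotient $\bar G := G/(K \cap N)$ and denote by $\bar P, \bar H, \bar N, \bar K$ the images of $P, H, N, K$. Since $K \cap N \trianglelefteq G$ and $K \cap N \le N$, the pair $(\bar G, \bar N)$ is again a Camina pair with $\bar N$ a $p$-group, $\bar H$ a $p'$-subgroup, $\bar P \bar H \trianglelefteq \bar G$, and $\bar K \cap \bar N = 1$. In this reduced setting $[\bar N, \bar H] \le \bar K \cap \bar N = 1$, so $\bar H$ centralizes $\bar N$. The Frattini argument applied to the $p$-complement $\bar H$ of $\bar P \bar H$ yields $\bar G = \bar P \cdot N_{\bar G}(\bar H)$. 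Assume $H \neq 1$ (otherwise the conclusion is vacuous) and fix $\bar h \in \bar H \setminus \{\bar 1\}$. For any $\bar n \in \bar N$ the Camina condition produces $\bar y = \bar p \bar y' \in \bar G$ with $\bar p \in \bar P$, $\bar y' \in N_{\bar G}(\bar H)$, and $[\bar h, \bar y] = \bar n$. A direct calculation gives
\[
[\bar h, \bar y] \;=\; \bigl(\bar h^{-1} \bar h^{\bar y'}\bigr) \cdot [\bar h, \bar p]^{\bar y'},
\]
writing $\bar n$ as a product of an element of $\bar H$ (since $\bar y'$ normalizes $\bar H$) and an element of $\bar K$ (since $\bar K \trianglelefteq \bar G$). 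Because $\bar n \in \bar N \le \bar P$ and $\bar P \bar H = \bar P \rtimes \bar H$, uniqueness of the decomposition forces the $\bar H$-factor to be trivial and $\bar n = [\bar h, \bar p]^{\bar y'} \in \bar K \cap \bar N = 1$. As $\bar n$ was arbitrary, $\bar N = \bar 1$, i.e., $K \cap N = N$, so $N \le K$.

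For the strict inequality, suppose $N = K$; the aim is to conclude that $G$ is Frobenius with kernel $N$. From $[P, H] = N$, the standard coprime-action identity $[P, H, H] = [P, H]$ gives $[N, H] = N$, and $P = N C_P(H)$. It then suffices to establish (a) $C_P(H) \le N$ (so $P = N$) and (b) $C_N(H) = 1$ (so $H$ acts fixed-point-freely on $N$). For (a), a hypothetical $y \in C_P(H) \setminus N$ would lie in $G \setminus N$ with $H \le C_G(y)$; the Camina identity $|C_G(y)| = |C_{G/N}(yN)|$ would then be compared against the structure of $C_{G/N}(yN)$, which already contains $\bar P$ since $H$ centralizes $P/N$, to force a numerical contradiction. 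For (b), any $1 \neq n \in C_N(H)$ would give $H \le C_G(n)$ with $H \not\le N$, directly contradicting the Frobenius criterion and, via the assumption $N = K$, leading back through the commutator-generation argument to a contradiction.

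The main obstacle is expected to be the strict inequality, particularly step (a): the Camina identity is an equality rather than an inequality, so ruling out $y \in C_P(H) \setminus N$ requires genuine structural use of the Camina hypothesis applied to $p$-elements of $P$ outside $N$, very likely together with auxiliary facts about $P/N$ (such as its abelianness, or sharper consequences of the coprime action of $H$ on $P$).
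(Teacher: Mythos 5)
Your first half is correct, and in fact it is more self-contained than the paper's treatment of the same containment: the paper obtains $N \le [{\bf O}_p(G),H]$ from the Chillag--MacDonald result that $NH$ is a Frobenius group (so $C_N(H)=1$, whence $N=[N,H]\le[{\bf O}_p(G),H]$ by coprime action), whereas your passage to $G/(K\cap N)$, the Frattini decomposition, and the uniqueness of the $\bar P$-versus-$\bar H$ factorization use only the commutator characterization of Camina pairs. That part I would accept as written.

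The strictness half, however, is a genuine gap, and it is exactly the part you flag as the obstacle. Two concrete problems. First, in your step (a) the claim that $C_{G/N}(yN)$ already contains the image of $P$ is unjustified: $N=[P,H]$ says that $H$ centralizes $P/N$, so $P/N \le C_{G/N}(hN)$ for $h \in H$, but it says nothing about an element $y \in C_P(H)\setminus N$ centralizing $P/N$ (which need not be abelian), so the intended numerical comparison with $|C_G(y)|=|C_{G/N}(yN)|$ does not get started. Second, even granting (a) and (b), the conclusion that $G$ is Frobenius with kernel $N$ does not follow: (a)+(b) give ${\bf O}_p(G)=N$ and $C_N(H)=1$, but Frobenius-ness with kernel $N$ needs $C_N(g)=1$ for every $g \in G\setminus N$, and nothing you have rules out fixed points of $p$-elements outside $N$ or of $p'$-elements not conjugate into $H$; note also that $C_N(H)=1$ is weaker than $H$ acting fixed-point-freely. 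The paper closes this step in one line by citing Chillag--MacDonald \cite{ChMc} twice: $NH$ is a Frobenius group, so $N\cap N_G(H)=1$; coprime action plus the Frattini argument give $G={\bf O}_p(G)N_G(H)=[{\bf O}_p(G),H]N_G(H)$; hence if $N=[{\bf O}_p(G),H]$ then $N_G(H)$ complements $N$, and Proposition 3.2 of \cite{ChMc} (in a Camina pair, a complemented $N$ forces $G$ to be Frobenius with kernel $N$) gives the contradiction. That complementation result is the missing idea; without invoking it (or reproving it) your plan for the strict inequality does not close.
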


\begin{proof}
We know that $H$ acts coprimely on ${\bf O}_p (G)$, so
$$
{\bf O}_p (G) = [{\bf O}_p (G),H] C_{{\bf O}_p (G)} (H).
$$
We know from
\cite{ChMc} that $NH$ is a Frobenius group so $N \cap N_G (H) = 1$.  Since ${\bf O}_p (G) H$ is normal in $G$, we may use
the Frattini argument to see that $G = {\bf O}_p (G) N_G (H) = [{\bf
O}_p (G),H] N_G (H)$.  If $N = [{\bf O}_p (G),H]$, then $N$ is
complemented in $G$ by $N_G (H)$.  By Proposition 3.2 of \cite{ChMc}, this
implies that $G$ is a Frobenius group with Frobenius kernel $N$, a
contradiction.
\end{proof}

The following lemma considers the case when we have a ``large'' normal abelian subgroup.

\begin{lemma} \label{abel}
Let $(G,N)$ be a $p$-Gagola pair.  Suppose $M$ is a normal abelian $p$-subgroup of $G$ with $N < M$ and $P$ is a Sylow $p$-subgroup of $G$.  Then $|P:M| \ge |M:N|$.  If in addition, $|N| \le |M:N|$, then $|N|^2 \le |P:N|$.
\end{lemma}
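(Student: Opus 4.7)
The plan is to use the Clifford-theoretic description of the Gagola character $\chi$ to compute multiplicities of linear characters of $M$ in $\chi_M$, and then use integrality to force $|M:N|$ to divide $e$. From the proof of Lemma~\ref{gagola 2}, one may choose a constituent $\lambda \in \irr N$ of $\chi_N$ whose stabilizer in $G$ is exactly $P$, and the unique character $\psi \in \irr P$ over $\lambda$ is fully ramified over $P/N$ with $\psi_N = e\lambda$ and $\chi = \psi^G$. Applying the induction formula (with $N$ normal in $G$) yields
\[
\chi_N = e\sum_{\mu}\mu,
\]
where $\mu$ runs over the $G$-orbit of $\lambda$, a set of size $|N|-1$. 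Since $M$ is a normal abelian $p$-subgroup, $M \le {\bf O}_p(G) \le P$, so every $\mu$ in this orbit is $M$-invariant.

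The central step is to compute $[\chi_M,\theta]_M$ for a linear $\theta \in \irr M$ extending some orbit element $\mu$; because $\lambda$ is $M$-invariant and $M/N$ is abelian, such extensions exist and there are exactly $|M:N|$ of them above each $\mu$. Using that $\chi$ vanishes off $N$ together with the explicit expression for $\chi_N$ above, the standard inner-product identity gives
\[
[\chi_M,\theta]_M = \frac{1}{|M|}\sum_{n\in N}\chi(n)\overline{\theta(n)} = \frac{e|N|}{|M|}[\mu,\theta_N]_N = \frac{e}{|M:N|},
\]
where the last equality uses that $\theta_N = \mu$. Since a character multiplicity must be a non-negative integer, this forces $|M:N|$ to divide $e$; in particular $|M:N| \le e$.

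Both conclusions now follow from $|P:N| = e^2$ (Lemma~\ref{gagola 2}): we have $|P:M| = e^2/|M:N| \ge e \ge |M:N|$, which is the first conclusion, and if in addition $|N| \le |M:N|$, then $|N| \le |M:N| \le e$, whence $|N|^2 \le e^2 = |P:N|$. The only nontrivial obstacle is the multiplicity computation; but given the explicit form of $\chi_N$ provided by the Gagola structure, and the fact that $\chi$ vanishes off $N$, the calculation (and hence the divisibility $|M:N| \mid e$ that drives everything) is essentially forced.
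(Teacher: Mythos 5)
Your proof is correct, but it follows a genuinely different route from the paper. The paper's argument is entirely local to the Sylow subgroup $P$ and never mentions $\chi$, $e$, or the vanishing of the Gagola character: since $\lambda$ is fully ramified with respect to $P/N$, every member of $\irr {M \mid \lambda}$ lies under the unique character of $P$ over $\lambda$, so $\irr {M \mid \lambda}$ is a single orbit under $P/M$ (note $M$, being abelian, acts trivially on its own characters), whence $|P:M| \ge |\irr {M \mid \lambda}| = |M:N|$, and the second conclusion follows by multiplying indices. You instead work globally with the Gagola character: using $\chi_N = e\sum_\mu \mu$, the fact that $\chi$ vanishes off $N$, and integrality of the multiplicity $[\chi_M,\theta] = e/|M:N|$, you obtain the divisibility $|M:N| \mid e$, and then convert to the stated inequalities via $e^2 = |P:N|$ from Lemma \ref{gagola 2}. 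Your computation is sound (and in fact yields a slightly stronger intermediate statement, the divisibility rather than just the inequality, though the paper's orbit argument implicitly gives this too since the orbit size divides $|P:M|$), but it needs more inputs: the identification $\chi = \psi^G$, the vanishing of $\chi$ off $N$, and the value of $e$. Two small remarks: the paragraph on $M$-invariance of the orbit characters is unnecessary, since $M$ is abelian and contains $N$, so $M$ centralizes $N$ and every character of $N$ extends to $M$ automatically; and in the displayed computation the step replacing $[\chi_N,\theta_N]$ by $e[\mu,\theta_N]$ deserves the one-line justification that the orbit characters are distinct, hence pairwise orthogonal, and $\theta_N = \mu$ is one of them.
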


\begin{proof}
Let $\lambda \in \irr N$ with $\lambda \ne 1_N$.  By \cite{Gagola}, we know that $\lambda$ is fully ramified with respect to $P/N$.  It follows that $P/M$ must act transitively on $\irr {M \mid \lambda}$.  We conclude that $|P:M| \ge |\irr {M \mid \lambda}| = |M:N|.$  If $|M:N| \ge |N|$, then $|P:N| \ge |N|^2$, and the result is proved.
\end{proof}

\section{Frobenius complements}

We start with presenting two general lemmas regarding solvable Frobenius complements.  We would not be surprised if these results were known.  A group $G$ is said to be a {\it Z-group} if all of its Sylow subgroups are cyclic.  It is known that if $G$ is a Z-group then $G/G'$ and $G'$ are cyclic groups of coprime order (see Satz IV.2.11 of \cite{hup} or Theorem 5.16 of \cite{isa}).  This first extends to Z-groups that are Frobenius complements, a result that is well-known for Frobenius complements of odd order, and as we will see the proof is essentially identical.  (See Satz V.8.18(b) of \cite{hup} or Theorem 6.19 of \cite{isa}.)

\begin{lemma} \label{Z-group}
Let $G$ be a Frobenius complement that is a Z-group.  If $p$ is a prime that divides $|G|$, then $G$ has a unique subgroup of order $p$.
\end{lemma}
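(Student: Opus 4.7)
The plan is to use the Z-group structure theorem---$G'$ and $G/G'$ are cyclic of coprime order---and split on whether $p$ divides $|G'|$ or $|G/G'|$. The Frobenius complement hypothesis will enter only in the second case, via the standard fact that any subgroup of order $pq$ (with $p \ne q$ primes) of a Frobenius complement is cyclic.

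First suppose $p \mid |G'|$. Since $\gcd(|G'|, |G/G'|) = 1$, the Sylow $p$-subgroup $P$ of $G$ has trivial image in $G/G'$, so $P \le G'$. Being the Sylow $p$-subgroup of the cyclic group $G'$, $P$ is characteristic in $G'$ and hence normal in $G$. As $P$ is cyclic, it contains a unique subgroup of order $p$, and this subgroup is the unique one of order $p$ in $G$.

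Now suppose $p \mid |G/G'|$, equivalently $p \nmid |G'|$. Given any two subgroups $Q_1 = \langle x \rangle$ and $Q_2 = \langle y \rangle$ of order $p$, I would set $H = \langle x, y \rangle$ and show $H$ is cyclic. Since $H$ is a subgroup of $G$, it is itself a Z-group and a Frobenius complement, so $H'$ and $H/H'$ are cyclic of coprime order. If $p \mid |H'|$, the argument of the previous paragraph applied to $H$ forces $Q_1 = Q_2$, so I may assume $p \nmid |H'|$. For each prime $q \mid |H'|$, the Sylow $q$-subgroup $R_q$ of $H'$ is cyclic, say of order $q^a$, and contains a unique subgroup $T_q$ of order $q$. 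By the $pq$-cyclic property in $H$, the subgroup $\langle x, t \rangle$ with $t$ a generator of $T_q$ is cyclic, so $x$ centralizes $T_q$.

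Since $R_q$ is characteristic in $H'$, it is normal in $H$, so $x$ acts on $R_q$ as an automorphism of order dividing $p$. The kernel of the restriction $\aut{R_q} \to \aut{T_q}$ corresponds to the units of $\mathbb{Z}/q^a$ congruent to $1 \pmod{q}$, which form a $q$-group (trivially so when $q=2$, since $(\mathbb{Z}/2^a)^{\times}$ is already a $2$-group). The action of $x$ lies in this kernel, has $p$-power order with $p \ne q$, and is therefore trivial, so $x$ centralizes all of $R_q$. Running this over all primes $q \mid |H'|$ yields $x \in \cent{H}{H'}$, and symmetrically $y \in \cent{H}{H'}$. Since $H = \langle x, y \rangle$, this forces $H' \le \gpcen{H}$, so $H/\gpcen{H}$ is a quotient of the cyclic group $H/H'$ and hence cyclic; therefore $H$ is abelian. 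An abelian Z-group is cyclic, so $H$ is cyclic and $Q_1 = Q_2$. The delicate point is the automorphism calculation that promotes the $pq$-cyclic information about $T_q$ to full centralization of $R_q$; this is where any real obstruction could appear, and I expect it is precisely because this step works uniformly for every prime $q$ that the well-known odd-order argument adapts essentially verbatim to the Z-group case considered here.
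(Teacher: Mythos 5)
Your proof is correct, and its skeleton matches the paper's: the same case split on whether $p$ divides $|G'|$ or $|G/G'|$, with the first case handled identically and the second case driven by the same key input, namely that subgroups of order $pq$ of a Frobenius complement are cyclic. Where you diverge is in how that input is exploited. The paper fixes one subgroup $P$ of order $p$, passes to the preimage $X$ of the unique order-$p$ subgroup of $G/G'$ (which contains every order-$p$ subgroup of $G$), and uses Fitting's theorem for the coprime action of $P$ on the abelian group $G'$: since $C_{G'}(P)$ contains every prime-order subgroup of $G'$, the decomposition $G' = [G',P] \times C_{G'}(P)$ forces $G' = C_{G'}(P)$, so $X = G' \times P$ is cyclic. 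You instead take two candidate subgroups, form $H = \langle x, y\rangle$, and promote the centralization of each $T_q$ to all of the Sylow $q$-subgroup $R_q$ of $H'$ by the elementary observation that the kernel of $\aut{R_q} \to \aut{T_q}$ is a $q$-group, concluding $H' \le \gpcen{H}$, hence $H$ abelian, hence cyclic. Both devices are sound; the paper's Fitting argument is marginally slicker (one application, no arithmetic in $(\mathbb{Z}/q^a)^\times$), while yours is more self-contained, avoiding Fitting's theorem and the auxiliary subgroup $X$. One small presentational point: when you invoke the $pq$-cyclic property for $\langle x, t\rangle$ you are implicitly using that this subgroup has order exactly $pq$, which holds because $T_q$ is characteristic in $R_q$, hence normal in $H$; it would be cleaner to record that normality before applying the theorem (the paper's $PQ$ step leans on the analogous fact that $Q$ is normal because $G'$ is cyclic and normal), but this is not a gap.
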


\begin{proof}
We know that $p$ divides only one of $|G'|$ or $|G:G'|$.  If $p$ divides $|G'|$, then since $G'$ is cyclic, we know $G'$ has a unique subgroup of order $p$.  Since $p$ does not divide $|G:G'|$ and $G'$ is normal in $G$, it follows that this is the unique subgroup of $G$ having order $p$.

Suppose that $p$ divides $|G:G'|$.  We know that $G/G'$ is cyclic, so there is a unique subgroup $X/G'$ having order $p$. Observe that $X$ contains all the subgroups of $G$ having order $p$, so it suffices to prove that $X$ has a unique subgroup of order $p$.  Let $P$ be a subgroup of $X$ having order $p$.  Let $Q$ be any subgroup of $G'$ having prime order, say $|Q| = q$. Then $PQ$ is a subgroup of order $pq$, and by either Satz V.8.15 (b) of \cite{hup} or Theorem 6.9 of \cite{isa}, we see that $PQ$ is cyclic.  Hence, $Q$ centralizes $P$.  It follows that every subgroup of $G'$ of prime order centralizes $P$, and thus, $C_{G'} (P)$ contains every subgroup of $G'$ having prime order.  Since $G'$ is abelian and has order coprime to $p$, we may use Fitting's theorem to see that $G' = [G',P] \times C_{G'} (P)$.  Since $C_{G'} (P)$ contains all the subgroups of prime order, we obtain $G' = C_{G'} (P)$. Now, $X = G' \times P$, and this proves the result.
\end{proof}

This next result uses a Theorem due to Zassenhaus.

\begin{theorem} \label{solv Frob}
Let $G$ be a solvable Frobenius complement.  If $p$ is a prime divisor of $|G|$ and $P$ is a subgroup of $G$ of order $p$, then either:
\begin{enumerate}
\item $P$ is normal in $G$; or
\item $p = 3$, $9$ does not divide $|G|$, and  a Sylow $2$-subgroup of $G$ is quaternion of order $8$.
\end{enumerate}
\end{theorem}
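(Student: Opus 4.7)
The approach will use Zassenhaus's classification of solvable Frobenius complements (for example, Satz V.8.18(b) of \cite{hup} or Theorem 6.19 of \cite{isa}): every such $G$ is either a Z-group, or has a (characteristic) normal subgroup of the form $SL(2,3) \times M$ with $M$ a Z-group of order coprime to $6$. Given this structural fact, I would split the argument into cases on $p$ and on whether $G$ is a Z-group.

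I would begin by handling $p = 2$. Sylow $2$-subgroups of Frobenius complements are cyclic or generalized quaternion, each of which contains a unique involution; two distinct involutions of $G$ would generate a dihedral subgroup containing a Klein four subgroup, which embeds in neither a cyclic nor a generalized quaternion $2$-group. Hence $G$ has a unique subgroup of order $2$, which is characteristic and therefore normal, so conclusion (1) holds.

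Now suppose $p$ is odd, so the Sylow $p$-subgroup $S$ of $G$ is cyclic and $P$ is its unique subgroup of order $p$. If $G$ is a Z-group, Lemma \ref{Z-group} gives a unique subgroup of $G$ of order $p$, hence (1). If $G$ is not a Z-group, Zassenhaus's theorem produces the $SL(2,3) \times M$ piece inside $G$. For $p > 3$, $P$ lies in the characteristic Hall $\{2,3\}'$-subgroup $M$; applying Lemma \ref{Z-group} inside the Z-group $M$ shows that the unique order-$p$ subgroup of $M$ is characteristic in $G$, hence normal, giving (1). For $p = 3$, $P$ must project into the $SL(2,3)$ factor (since $|M|$ is coprime to $3$), where the four Sylow $3$-subgroups of $SL(2,3)$ are non-normal; then the Sylow $3$-subgroup of $G$ has order exactly $3$ (so $9 \nmid |G|$), and by the non-Z-group part of Zassenhaus's theorem the Sylow $2$-subgroup of $G$ is $Q_8$, yielding (2).

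The main work --- and the reason Zassenhaus is invoked rather than just Lemma \ref{Z-group} and the basic Sylow classification --- lies in ruling out the alternative non-Z-group possibilities when $P$ is not normal for $p = 3$. In particular, one must extract from the precise form of the classification that in the non-normal case the Sylow $2$-subgroup is forced to be $Q_8$, not a larger generalized quaternion group. This is the delicate part of the proof and depends on a careful appeal to the structure theorem rather than any of the general-purpose lemmas already established in the paper.
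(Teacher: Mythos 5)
Your handling of $p=2$ and of the Z-group case is essentially the paper's, but the structural statement you attribute to Zassenhaus --- that a solvable Frobenius complement is either a Z-group or has a characteristic normal subgroup ${\rm SL}_2(3)\times M$ with $M$ a Z-group of order prime to $6$ --- is not Zassenhaus's theorem and is false. The groups $Q_8$, $Q_{16}$ and $Q_8\times C_9$ are solvable Frobenius complements that are not Z-groups and contain no copy of ${\rm SL}_2(3)$ at all. What Zassenhaus gives (Theorem 18.2 of \cite{perm}, the form the paper actually cites) is only a normal Z-group $N$ with $G/N$ isomorphic to a subgroup of $S_4$. From that weaker statement the paper handles $p>3$, and $p=3$ with $9\mid|G|$, by noting that then $p$ divides $|N|$, so Lemma \ref{Z-group} gives a unique subgroup of order $p$ in $N$, which is characteristic in $N$, normal in $G$, and necessarily equal to $P$ because Sylow $p$-subgroups of $G$ are cyclic. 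Your version of this step has additional unjustified claims even granting your decomposition: you never control the index of ${\rm SL}_2(3)\times M$ in $G$, so there is no reason $M$ is a Hall $\{2,3\}'$-subgroup of $G$, nor that the given $P$ coincides with the order-$p$ subgroup of $M$.

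The failure is most serious exactly at the step you flag as the main work. It is not merely delicate to extract from a classification that a non-normal $P$ of order $3$ forces a Sylow $2$-subgroup of $G$ to be quaternion of order $8$: no such extraction is possible, because the binary octahedral group $2\cdot S_4$ of order $48$ (the preimage in ${\rm SL}_2(7)$ of $S_4\le{\rm PSL}_2(7)$, which acts fixed-point-freely on the natural module and hence is a solvable Frobenius complement) contains ${\rm SL}_2(3)$ as a normal subgroup, has non-normal subgroups of order $3$, and has generalized quaternion Sylow $2$-subgroups of order $16$. So your plan of reading conclusion (2) off a normal ${\rm SL}_2(3)\times M$ cannot be completed; this example also shows that the $Q_8$ clause has to be treated with real care, since for $2\cdot S_4$ it is the Fitting subgroup, not a Sylow $2$-subgroup, that is quaternion of order $8$. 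The paper's route for $p=3$, $9\nmid|G|$ is different from yours and does not use a classification of the non-Z-group complements: it places $P$ in a Hall $2$-complement $H$ (a Z-group, so $P\trianglelefteq H$ by Lemma \ref{Z-group}), chooses a Sylow $2$-subgroup $Q$ with $PQ$ a subgroup, observes that if $Q$ centralizes $P$ then $P\trianglelefteq G=HQ$, and otherwise invokes Lemma 18.3(iii) of \cite{perm} for the $\{2,3\}$-group $PQ$, which yields the quaternion conclusion there; nothing in your proposal substitutes for that lemma.
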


\begin{proof}
If $p = 2$, then the result is true by either Satz V.8.18 (a) of \cite{hup} or Theorem 6.3 of \cite{isa}.  Also, if $|G|$ is odd, then we have seen that the result is true.  Therefore, we may assume that $|G|$ is even and $p$ is odd.  By a theorem of Zassenhaus (Theorem 18.2 of \cite{perm}), $G$ has a normal subgroup $N$ so that $N$ is a Z-group and $G/N$ is isomorphic to a subgroup of $S_4$.  Thus, if $p > 3$ or $9$ divides $|G|$, then $p$ divides $|N|$.  Since subgroups of Frobenius complements are Frobenius complements, we may apply Lemma \ref{Z-group} to see that $P$ is characteristic in $N$, and so $P$ is characteristic in $G$.

Thus, we may assume that $p = 3$ and $9$ does not divide $|G|$. Let $H$ be a Hall $2$-complement of $G$ containing $P$.  Thus, $H$ is a Frobenius complement with odd order.  Let $Q$ be a Sylow $2$-subgroup of $G$ so that $QP$ is a subgroup of $G$.  Since $P$ is a subgroup of $H$ with prime order, we know that $P$ is normal in $H$.  If $P$ is central in $PQ$, then $P$ is normal in $G$ since $G = HQ$.  Thus, we may assume that $P$ is not central in $PQ$, and so, the center of $PQ$ is a $2$-group.   By Lemma 18.3 (iii) of \cite{perm}, we know that either $P$ is normal in $PQ$ and hence $G$, or the Fitting subgroup of $PQ$ is isomorphic to the quaternions.  This implies that $Q$ is quaternion of order $8$.
\end{proof}

\section{Semi-linear groups}

This first result of this section is suggested by Theorem 2.4 of \cite{Gagola} and Theorem B of \cite{Kuisch}.  We would like to be able to replace the hypothesis that $(G,N)$ is a Gagola pair with the hypothesis that $(G,N)$ is a Camina pair with $N$ an elementary abelian $p$-group (that is minimal normal in $G$).

If $V$ is an elementary abelian $p$-group, then we define $\Gamma (V)$ to be the semi-linear group as defined in Chapter 2 of \cite{MaWo}.  We can view $V$ as the additive group of some finite field $F$.  In particular, $\Gamma = \Gamma (V)$ is the semi-direct product of $S$ acting on $\Gamma_o (V)$ where $\Gamma_o (V)$ is isomorphic to the multiplicative group of $F$ and $S$ is isomorphic to the Galois group of $F$ over $Z_p$.  If $|V| = p^n$, then $|\Gamma_o (V)| = p^n - 1$ and $|S| = n$. It is not difficult to show that $C_{\Gamma} (\Gamma_o (V)) = \Gamma_o (V)$.

If $p$ is a prime, then recall that the group $G$ is {\it $p$-closed} if $G$ has a normal Sylow $p$-subgroup.

\begin{lemma} \label{aff}
Let $(G,N)$ be a $p$-Gagola pair where $G$ is solvable.  If $G$ is not $p$-closed, then either
\begin{enumerate}
\item $G/{\bf O}_p (G) \le \Gamma (N)$ and $|N| = p^{ap}$ for some positive integer $a$, and $G/{\bf O}_p (G)$ has a normal $p$-complement, or
\item $p = 3$, $|N| = 9$, and $G/{\bf O}_3 (G)$ is isomorphic to ${\rm SL}_2 (3)$.
\end{enumerate}
\end{lemma}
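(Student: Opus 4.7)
The plan is to analyze the conjugation action of $G$ on $N$: I will identify $H := G/{\bf O}_p(G)$ with a solvable subgroup of ${\rm GL}(N)$ that acts transitively on $N \setminus \{0\}$, apply Huppert's classification of such groups, and then use the non-$p$-closed hypothesis and the Gagola-pair conditions to sort out the cases.

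First I would establish $C_G(N) = {\bf O}_p(G)$. If $K \le C_G(N)$ is a $p'$-subgroup, then by \cite{ChMc} the subgroup $NK$ is Frobenius; but $K$ centralizes $N$, which contradicts fixed-point-freeness on $N \setminus \{1\}$ unless $K = 1$. Hence $C_G(N)$ is a $p$-group, so $C_G(N) \le {\bf O}_p(G)$. Conversely, the standard $p$-group fact gives $N \cap Z({\bf O}_p(G)) \ne 1$; this intersection is normal in $G$ and contained in the minimal normal subgroup $N$, so it equals $N$, yielding $N \le Z({\bf O}_p(G))$ and ${\bf O}_p(G) \le C_G(N)$.

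With $C_G(N) = {\bf O}_p(G)$ established, $H$ becomes a solvable subgroup of ${\rm GL}(N) \cong {\rm GL}(n, p)$ (where $|N| = p^n$) acting faithfully and transitively on $N \setminus \{0\}$, by Lemma 2.1 of \cite{Gagola}. Huppert's classification (see Chapter 2 of \cite{MaWo}) gives two alternatives: $H \le \Gamma(N)$ (semi-linear), or $|N| \in \{3^2, 5^2, 7^2, 11^2, 23^2, 3^4\}$ with $H$ in an explicit exceptional list. In the semi-linear case, $\Gamma(N) = \Gamma_o(N) \rtimes S$ with $|\Gamma_o(N)| = p^n - 1$ coprime to $p$ and $S$ cyclic of order $n$; the Sylow $p$-subgroup of $H$ injects into $S$, and the non-$p$-closed hypothesis forces this image to be nontrivial, so $p \mid n$ and $n = ap$. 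The subgroup $H \cap \Gamma_o(N)$ is a normal $p'$-subgroup with cyclic quotient $H/(H \cap \Gamma_o(N)) \le S$, so the set of $p'$-elements of $H$ is a characteristic (hence normal) subgroup---the desired normal $p$-complement. This is conclusion (1).

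The main obstacle is ruling out all exceptional cases except $|N| = 9$, $p = 3$, $H \cong {\rm SL}_2(3)$. For $p \in \{5, 7, 11, 23\}$ with $|N| = p^2$, every solvable transitive subgroup of ${\rm GL}(2, p)$ not in $\Gamma L(1, p^2)$ lies in the normalizer of a quaternion $Q_8$, with $|N_{{\rm GL}(2, p)}(Q_8)| = 24 (p-1)$, an order coprime to $p$; hence the Sylow $p$-subgroup of $H$ is trivial, forcing $G$ to be $p$-closed, a contradiction. The $|N| = 3^4$ case is ruled out by a parallel analysis of the exceptional subgroups of ${\rm GL}(4, 3)$, combined with the integrality of $e^2 = |P:N|$. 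Finally, for $|N| = 9$ with $p = 3$, Huppert's exceptional list yields $H \in \{{\rm SL}_2(3), {\rm GL}_2(3)\}$; the $H = {\rm GL}_2(3)$ case is excluded by the count $|G| = (|N|-1)|P| = 72 e^2 = |H| \cdot |{\bf O}_3(G)|$, which with $|H| = 48$ forces $|{\bf O}_3(G)| = 3 e^2 / 2$---never simultaneously an integer and a power of $3$, since $2 \cdot 3^{k-1}$ is never a perfect square. This leaves $H \cong {\rm SL}_2(3)$, giving conclusion (2).
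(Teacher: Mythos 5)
Your overall strategy (reduce to a solvable subgroup of ${\rm GL}(n,p)$ transitive on $N\setminus\{0\}$ and invoke Huppert's classification) is workable, and several pieces are fine: the identification $C_G(N)={\bf O}_p(G)$, the semilinear case giving conclusion (1), the elimination of $p\in\{5,7,11,23\}$ via the $p'$-order of $N_{{\rm GL}(2,p)}(Q_8)$, and the exclusion of ${\rm GL}_2(3)$ by counting. Note, however, that the paper takes a shorter route: by Corollary 2.3 of \cite{Gagola} the stabilizer $C_{G/{\bf O}_p(G)}(x)$ of each $x\in N\setminus\{1\}$ is a full Sylow $p$-subgroup of $G/{\bf O}_p(G)$, and this stronger hypothesis feeds directly into Lemma 1 of \cite{bound}, which already delivers the dichotomy ``semilinear, or $|N|=9$ with ${\rm SL}_2(3)$ or ${\rm GL}_2(3)$''; you use only transitivity plus non-$p$-closedness, which is why you are forced to redo the exceptional-case analysis by hand.

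The genuine gap is the case $|N|=3^4$. You dismiss it with ``a parallel analysis \dots combined with the integrality of $e^2=|P:N|$,'' but neither half of that sentence works as stated. Unlike the $p^2$ cases with $p\ge 5$, the relevant normalizer here is that of an extraspecial group $2^{1+4}$ in ${\rm GL}(4,3)$, whose outer part is isomorphic to a subgroup of ${\rm O}_4^-(2)\cong S_5$ of order divisible by $3$; so non-$3$-closedness does not give an immediate contradiction, and one must actually exclude a solvable transitive subgroup $H$ with $3\mid |H|$ (e.g.\ a putative group of order $80\cdot 3$ with vector stabilizers of order $3$). The squareness of $e^2$ does not rescue this: by Lemma \ref{gagola 2}, $e^2=|P:N|=|{\bf O}_3(G):N|\cdot|G/{\bf O}_3(G)|_3$, so being a perfect square constrains only the total exponent of $3$, not the $3$-part of $H=G/{\bf O}_3(G)$, since $|{\bf O}_3(G):N|$ is an unknown power of $3$. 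To close this case you need either the Sylow-stabilizer condition from Gagola's Corollary 2.3 together with the explicit structure of the exceptional groups for $3^4$ (which is exactly what Lemma 1 of \cite{bound} encapsulates), or a concrete argument that no solvable transitive subgroup of the extraspecial normalizer has vector stabilizers that are nontrivial $3$-groups. As written, the proposal does not prove the lemma for $|N|=3^4$.
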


\begin{proof}
By Corollary 2.3 of \cite{Gagola}, we know that $G/{\bf O}_p (G)$ acts on $N$ and that $C_{G/{\bf O}_p (G)} (x)$ is a Sylow $p$-subgroup of $G/{\bf O}_p (G)$ for every $x \in N \setminus \{ 1 \}$.  This satisfies the hypotheses of Lemma 1 of \cite{bound}.  The conclusion of Lemma 1 of \cite{bound} is that either $|N| = 9$ and $G/{\bf O}_3 (G)$ is isomorphic to either ${\rm SL}_2 (3)$ or ${\rm GL}_2 (3)$, or $G/{\bf O}_p (G) \le \Gamma (N)$.  If the first conclusion holds, then we have our conclusion (2) since $|G:{\bf O}_p (G)| = |N| - 1 = 48$.  Thus, we may assume that $G/{\bf O}_p (G) \le \Gamma (N)$.  Using the notation from \cite{MaWo}, we know that $\Gamma (N)$ has a normal cyclic subgroup $\Gamma_o (N)$ whose order is $|N| - 1$.  Because $N$ is a $p$-group, we have $|N| = p^n$ for some positive integer $n$.  Since $p$ divides $|G:{\bf O}_p (G)|$ and $p$ does not divide $|\Gamma_o (N)| = p^n - 1$, we conclude that $p$ divides $|\Gamma (N):\Gamma_o (N)| = n$, and thus, $n = ap$ for some positive integer $a$ as desired.   Since $\Gamma (N)/\Gamma_o (N)$ is cyclic, we see that $G/{\bf O}_p (G)$ has a normal $p$-complement.
\end{proof}

This next result is a number theoretic condition.  It is probably known.  It is related to the number theoretic results proved in Lemma 2.4 of \cite{nor}.

\begin{lemma}\label{nor cond}
Let $p$ be a prime and $n$ a positive integer.  Suppose that $q$ is a prime so that $n = q^a m$ where $a$ is a positive integer and $m$ is not divisible by $q$.  If $q^2$ divides $p^m - 1$, then $(p^n - 1)_q = q^a (p^m - 1)_q$.
\end{lemma}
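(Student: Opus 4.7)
The plan is to set $r = p^m$, so that $p^n - 1 = r^{q^a} - 1$ and the hypothesis $q^2 \mid p^m - 1$ becomes $q^2 \mid r - 1$. Writing $v_q$ for the $q$-adic valuation, the claim reduces to the identity
\[
v_q(r^{q^a} - 1) = v_q(r - 1) + a,
\]
which is the standard Lifting-the-Exponent relation under the stated hypothesis. I would prove it by induction on $a$.

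For the base case $a = 1$, use the factorization
\[
r^q - 1 = (r - 1) \sum_{j=0}^{q-1} r^j,
\]
and show that the second factor has $q$-adic valuation exactly $1$. Writing $r = 1 + (r-1)$ and expanding $r^j \equiv 1 + j(r-1) \pmod{(r-1)^2}$, the sum is congruent to $q + \binom{q}{2}(r-1)$ modulo $(r-1)^2$. For odd $q$, the coefficient $\binom{q}{2} = q(q-1)/2$ is divisible by $q$, and since $q^2 \mid r-1$, the correction term $\binom{q}{2}(r-1)$ is divisible by $q^3$; thus $\sum_{j=0}^{q-1} r^j \equiv q \pmod{q^2}$, giving $v_q$-value $1$. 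For $q = 2$ the sum collapses to $r + 1$, and the hypothesis $4 \mid r - 1$ forces $r + 1 \equiv 2 \pmod 4$, so again the valuation is $1$.

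For the inductive step, set $r' = r^{q^{a-1}}$. Since $r - 1$ divides $r' - 1$, the hypothesis $q^2 \mid r - 1$ transfers to $r'$, and the base case applied to $r'$ yields $v_q((r')^q - 1) = v_q(r' - 1) + 1$. Combined with the inductive hypothesis $v_q(r' - 1) = v_q(r - 1) + (a-1)$, this gives the result.

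The only delicate point is the prime $q = 2$, where the factor $1/2$ in $\binom{q}{2}$ prevents the odd-$q$ analysis from going through directly; this is precisely why the hypothesis is phrased as $q^2 \mid p^m - 1$ rather than $q \mid p^m - 1$, and the case $q = 2$ then falls out cleanly from the trivial identity $r^2 - 1 = (r-1)(r+1)$.
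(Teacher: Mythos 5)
Your proof is correct and follows essentially the same route as the paper: both arguments telescope through the tower $p^m, p^{mq}, \ldots, p^{mq^a}$ and reduce to showing that each successive factor $\sum_{j=0}^{q-1} r^j$ (with $r$ a power of $p^m$) contributes exactly one factor of $q$. The only difference is that the paper obtains this key valuation by citing Lemma 2.4 of Noritzsch together with the hypothesis $q^2 \mid p^m - 1$, whereas you verify it directly by an elementary binomial-expansion (lifting-the-exponent) computation, handling $q = 2$ separately.
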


\begin{proof}
Observe that
$$
p^n - 1 = \prod_{i=1}^a \left( \frac {p^{m q^i} - 1}{p^{m q^{i-1}} - 1} \right) (p^m - 1).
$$
Thus, it suffices to prove that $\displaystyle \left( \frac {p^{m q^i} - 1}{p^{m q^{i-1}} - 1} \right)_q = q$ for all $i$. By Lemma 2.4 of \cite{nor}, we know that $q$ divides $({p^{m q^i} - 1})/({p^{m q^{i-1}} - 1})$ and that ${\rm gcd} (p^{m q^{i-1}} -1,({p^{m q^i} - 1})/({p^{m q^{i-1}} - 1}))$ divides $q$.  Since $q^2$ divides $p^m - 1$ which divides $p^{m q^{i-1}} - 1$, it follows that $\displaystyle \left( \frac {p^{m q^i} - 1}{p^{m q^{i-1}} - 1} \right)_q = q$.
\end{proof}

We now apply this to obtain a result about subgroups of semi-linear groups.

\begin{lemma} \label{prime pow}
Let $V$ be an elementary abelian group of order $2^n$.  Let $H$ be a subgroup of $\Gamma (V)$ of order $2^n - 1$ that acts transitively on $V \setminus \{ 1 \}$.  If $d$ is a nontrivial prime power that divides $n$, then $H \cap \Gamma_o (V)$ contains an element of order $2^d - 1$.
\end{lemma}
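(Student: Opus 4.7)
The plan is to set $K = H \cap \Gamma_o(V)$ and $s = |H:K|$, reduce the lemma to a purely number-theoretic divisibility, and then verify that divisibility prime-by-prime using the Lifting-the-Exponent lemma.

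Because $\Gamma_o(V) \cong C_{2^n - 1}$ is cyclic and $2^d - 1$ divides $2^n - 1$, the group $\Gamma_o(V)$ has a unique subgroup of order $2^d - 1$, and this subgroup lies in $K$ precisely when $(2^d - 1) \mid |K|$. Now $H/K$ embeds into the cyclic group $\Gamma(V)/\Gamma_o(V)$ of order $n$, so $s$ divides $n$; and since $|H| = 2^n - 1$, we also have $s \mid 2^n - 1$ and $|K| = (2^n-1)/s$. Hence the lemma reduces to the claim that $s\,(2^d - 1)$ divides $2^n - 1$.

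Writing $d = q^b$ for a prime $q$, I verify this divisibility at each prime $r$. Since $r \mid s \mid 2^n - 1$, the prime $r$ is odd. If $r \nmid 2^d - 1$, the required inequality $v_r(s) \le v_r(2^n - 1)$ is immediate from $s \mid 2^n - 1$. If instead $r \mid 2^d - 1$, let $k = \operatorname{ord}_r(2)$. Then $k \mid d = q^b$ with $k \neq 1$, so $k$ is a nontrivial power of $q$; thus $q \mid k \mid r - 1$ by Fermat, which rules out $r = q$ and gives $v_r(d) = 0$. Applying LTE to $2^n - 1 = (2^k)^{n/k} - 1$ and to $2^d - 1 = (2^k)^{d/k} - 1$ yields
\[
v_r(2^n - 1) - v_r(2^d - 1) = v_r(n/k) - v_r(d/k) = v_r(n) - v_r(d) = v_r(n),
\]
which is at least $v_r(s)$ because $s \mid n$. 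This completes the prime-by-prime estimate, so $s\,(2^d - 1) \mid 2^n - 1$ as required.

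The only subtle step, and the reason the prime-power hypothesis on $d$ is essential, lies in handling primes $r$ that divide both $s$ and $2^d - 1$: it is precisely the fact that $d = q^b$ is a prime power that forces $\operatorname{ord}_r(2)$ to be a power of $q$ and hence forces $r \neq q$, eliminating the term $v_r(d)$ from the LTE calculation. Beyond this observation, the argument is entirely elementary, and in particular the transitivity hypothesis is only needed insofar as it accompanies $|H| = 2^n - 1$.
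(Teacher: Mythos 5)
Your proof is correct, and it takes a genuinely different and more elementary route than the paper. The paper fixes a prime $r$ dividing $2^d-1$ and produces an element of order $(2^d-1)_r$ in $H\cap\Gamma_o(V)$ by group-theoretic means: when $(2^d-1)_r=r$ it uses the Frobenius-complement structure of $H$ (which is where transitivity enters), normality of the subgroup of order $r$, and $C_{\Gamma(V)}(\Gamma_o(V))=\Gamma_o(V)$; when $(2^d-1)_r\ge r^2$ it analyzes a Sylow $r$-subgroup of $\Gamma(V)$, uses cyclicity of Sylow subgroups of Frobenius complements, and invokes its Lemma~\ref{nor cond} (whose hypothesis $r^2\mid 2^m-1$ is exactly why the paper needs the separate first case). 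You instead observe that $K=H\cap\Gamma_o(V)$ is cyclic of order $(2^n-1)/s$ with $s=|H:K|$ dividing both $n$ and $2^n-1$, so the lemma is equivalent to the single divisibility $s(2^d-1)\mid 2^n-1$, which you verify prime-by-prime via Lifting the Exponent; your key step, that $\operatorname{ord}_r(2)$ is a nontrivial power of $q$ so $r\ne q$ and $v_r(d)=0$, is the same arithmetic heart as the paper's observation that $q\mid r-1$ forces $r\nmid d$. Your version is uniform in $r$ (no case split on $(2^d-1)_r$), dispenses with all Frobenius-complement and Sylow machinery, and shows the transitivity hypothesis is superfluous: only $H\le\Gamma(V)$ and $|H|=2^n-1$ are used. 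One cosmetic remark: you frame the prime-by-prime check around primes $r\mid s$, but to conclude $s(2^d-1)\mid 2^n-1$ you should also note the primes dividing $2^d-1$ but not $s$; these are covered by $2^d-1\mid 2^n-1$ (or by your own LTE computation with $v_r(s)=0$), so this is a presentational point rather than a gap.
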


\begin{proof}
Since $\Gamma_o (V)$ is cyclic, it suffices to show that $H \cap \Gamma_o (V)$ contains an element of order $(2^d - 1)_r$ for every prime $r$ that divides $2^d - 1$.  Fix a prime divisor $r$ of $2^d - 1$.  Suppose $(2^d - 1)_r = r$.  We know that $H$ has a normal subgroup of order $r$.  Thus, this subgroup will centralize $H \cap \Gamma_o (V)$.  We know that the centralizer of $\Gamma_o (V)$ in $\Gamma (V)$ is $\Gamma_o (V)$, so $H \cap \Gamma_o (V)$ will contain the subgroup of order $r$.  Thus, we may assume that $(2^d - 1)_r \ge r^2$.

Let $p$ be the prime so that $d$ is a power of $p$.  We know that $2^d \cong 1 ~({\rm mod}~r)$, so the order of $2$ modulo $r$ must be a power of $p$.  This implies that $p$ divides $r - 1$, and so, $r$ does not divide $d$.  If we write $n = r^a m$ where $a$ is a positive integer and $r$ does not divide $m$, then we conclude that $2^d - 1$ will divide $2^m - 1$.  In particular, $r^2$ divides $2^m - 1$.

Let $R$ be a Sylow $r$-subgroup of $\Gamma (V)$ so that $R \cap H$ is a Sylow $r$-subgroup of $H$.  We know that $|R| = (2^n - 1)_r n_r$.  Observe that $R \cap \Gamma_o (V)$ is a Sylow $r$-subgroup of $\Gamma_o (V)$, so $|R \cap \Gamma_o (V)| = (2^n - 1)_r$.  It follows that $|R:R \cap \Gamma_o (V)| = n_r = r^a$. Since $H$ is a Frobenius complement, we know that $R \cap H$ is cyclic.  Let $x$ be a generator for $R \cap H$.  Since $R \cap \Gamma_o (V)$ is normal in $R$, we see that $x^{r^a} \in R \cap \Gamma_o (V) \le H \cap \Gamma_o (V)$.  The order of $x^{r^a}$ will be $(2^n - 1)_r/r^a$, and by Lemma \ref{nor cond}, this equals $(2^m - 1)_r$.  Since this is divisible by $(2^d - 1)_r$, this yields the desired conclusion.
\end{proof}

\section{$p$ odd} \label{thm1sec}

We now prove Theorem \ref{thm1} when $p$ is odd.

We begin with a result that applies both when $p$ is odd and when $p = 2$.

\begin{lemma} \label{|N|p^2}
Let $(G,N)$ be a $p$-Gagola pair.  If $|N| \le p^2$, then $|G:N|_p \ge |N|^2$.
\end{lemma}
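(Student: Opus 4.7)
My plan is a straightforward case split on $|N| \in \{p, p^2\}$, leveraging Lemma \ref{gagola 2}, which gives $|G:N|_p = |P:N| = e^2$. Since $|P:N|$ is simultaneously a power of $p$ and a perfect square, it lies in $\{1, p^2, p^4, \ldots\}$; and since the paper's setting tacitly assumes $e > 1$ (otherwise $G$ is a $2$-transitive Frobenius group, in which case $d \le e^2 - e$ would already fail), we have $|P:N| \ge p^2$ for free.

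The case $|N| = p$ is then immediate, as $|N|^2 = p^2 \le |P:N|$.

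So the real work is the case $|N| = p^2$, where I have to improve the inequality to $|P:N| \ge p^4$; equivalently, I must rule out $|P| = p^4$, which I will do by contradiction. Assuming $|P| = p^4$, I would first exploit that $(G,N)$ is Gagola, hence Camina, so $G/N$ is abelian; the Sylow subgroup $P/N$ of $G/N$ is then a direct factor and $P \trianglelefteq G$. Next, $G/\cent{G}{N}$ embeds as an abelian subgroup of $\aut{N} \cong GL_2(p)$ acting transitively, hence regularly, on $N \setminus \{1\}$, giving $|G/\cent{G}{N}| = p^2 - 1$ and therefore $|\cent{G}{N}| = p^4 = |P|$. Thus $\cent{G}{N} = P$ and $N \le \gpcen{P}$.

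I would then use that $P' \le N$ (because $P/N$ is abelian) and that $P'$ is characteristic in $P$, hence normal in $G$. Transitivity of $G$ on $N \setminus \{1\}$ forbids any proper nontrivial $G$-invariant subgroup of $N$, so $P' \in \{1, N\}$; the first option makes $P$ abelian and precludes any nontrivial fully ramified character of $N$ with respect to $P/N$, so $P' = N$. Combined with $N \le \gpcen{P}$ and $|P/N| = p^2$, this forces $\gpcen{P} = P' = N$ of order $p^2$ and $P$ of class $2$ with $P/\gpcen{P} \cong C_p \times C_p$ (it cannot be cyclic, lest $P$ be abelian). But then $P' = [P,P]$ is generated by a single commutator $[a,b]$ and is therefore cyclic, contradicting $P' = N \cong C_p \times C_p$. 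The only delicate step is this last one: driving the structure of $P$ to the point where the abelianization $P/\gpcen{P}$ and the commutator subgroup $P'$ collide irreconcilably; everything preceding it is a direct application of the Camina--Gagola machinery already assembled in the paper.
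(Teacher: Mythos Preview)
Your setup is right: Lemma~\ref{gagola 2} gives $|P:N| = e^2$, so $|P:N| \in \{1, p^2, p^4, \ldots\}$, and Gagola's result $N < {\bf O}_p(G)$ (which the paper's own proof cites) rules out $|P:N| = 1$. This handles $|N| = p$ immediately, as you say.

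However, your argument for $|N| = p^2$ breaks at the very first step: the assertion ``$(G,N)$ is Camina, so $G/N$ is abelian'' is simply false. Camina pairs need not have abelian quotient---many of Gagola's own examples take $N = Z(P)$ for $P$ extraspecial, with $G/N$ highly nonabelian. Without $G/N$ abelian you cannot deduce $P \trianglelefteq G$, and that normality is the linchpin of everything that follows (you need it to make $P'$ normal in $G$ and hence force $P' \in \{1,N\}$). In fact, under your standing assumption $|P:N| = p^2$ one cannot a~priori exclude $|{\bf O}_p(G):N| = p$ with $G/{\bf O}_p(G)$ a subgroup of ${\rm GL}_2(p)$ of order $p(p^2-1)$, such as ${\rm SL}_2(p)$; in that scenario $G$ is not $p$-closed and your commutator argument never gets started.

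The paper's proof avoids any normality hypothesis on $P$. It applies the Camina condition in the form $|C_G(x)| = |C_{G/N}(xN)|$ to an element $x \in {\bf O}_p(G) \setminus N$ (such $x$ exists precisely because $N < {\bf O}_p(G)$). Setting $D(x)/N = C_{G/N}(xN)$, one gets $|D(x):C_G(x)| = |N|$; and since $N \le Z({\bf O}_p(G))$, the $p$-group $\langle N, x\rangle \le C_G(x)$ has order at least $p|N|$. Taking $p$-parts in $|G| = |G:C_G(x)|\cdot|C_G(x)|$ then yields $|G|_p \ge |N| \cdot p|N|$, i.e., $|G:N|_p > |N|$. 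The square constraint you identified then finishes both cases at once.
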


\begin{proof}
In \cite{Gagola}, it is proved that $N < {\bf O}_p (G)$, $N \le Z ({\bf O}_p (G))$, and $|G:N|_p$ is a square.  Consider an element $x \in {\bf O}_p (G)$, and let $D (x) = \{ g \in G | [x,g] \in N \}$.  We know that $D(x)/N = C_{G/N} (xN)$, so $|D(x):N| = |C_G (x)|$.  This implies that $|D(x):C_G(x)| = |N|$.  Notice that $\langle N,x \rangle \le C_G (x)$.  Since $xN$ will have nontrivial $p$-power order. This implies that $|G:N|_p > |N|$.

If $|N| = p$, then $|G:N|_p \ge p^2 = |N|^2$.  If $|N| = p^2$, then $|G:N|_p \ge p^3$.  Since we know that $|G:N|_p$ is square, this implies that $|G:N| \ge p^4 = |N|^2$.  This proves the lemma.
\end{proof}

Let $p$ be a prime and $a$ a positive integer.  We say that $q$ is a {\it Zsigmondy} prime of $p^a - 1$ if $q$ divides $p^a - 1$ and $q$ does not divide $p^b - 1$ for every positive integers $b < a$.  The Zsigmondy prime theorem states that a Zsigmondy primes exists for primes $p$ and positive integers $a$ except when $p$ is a Mersenne prime (i.e., $p + 1$ is a power of $2$) and $a = 2$ and when $p = 2$ and $n = 6$.  (See
Theorem IX.8.3 of \cite{HBII}.)

Let $p$ be a prime greater than $3$.  We know that $p^2$ is congruent to $1$ modulo $3$, so $3$ divides $p^2 - 1$.  This implies the following fact.  If $p$ is prime, $n$ is a positive integer, and $3$ is a Zsigmondy prime divisor of $p^n - 1$, then $n \le 2$.

We begin with a lemma about $p$-groups of nilpotence class $3$.

\begin{lemma} \label{class3}
Let $p$ be an odd prime, and suppose $P$ is a $p$-group of nilpotence class $3$.  Suppose that $[P',P] \le Z \le Z(P) \cap P'$.  If $P$ has an automorphism of order $2$ that inverts all the elements in $P/P'$ and in $Z$ and centralizes the elements in $P'/Z$, then for every element $x \in P \setminus P'$, there is an element $y \in xP'$ so that $C_{P/Z} (y Z) = C_P (y)P'/Z$.
\end{lemma}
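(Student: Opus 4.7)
My plan is to exploit the freedom to choose $y$ within the coset $xP'$, picking it so that $y^\sigma y \in Z$, and then to use $\sigma$ to recognize arbitrary commutators $[g,y] \in Z$ as (essentially) commutators of the form $[a,y]$ with $a \in P'$. To set up the reduction, note that $[P',P] \le Z$ forces $P'/Z \le Z(P/Z)$, so $C_P(y)P'/Z \le C_{P/Z}(yZ)$ holds automatically. For the reverse inclusion, I need that for every $g \in P$ with $[g,y] \in Z$ there is an $a \in P'$ with $[ga,y] = 1$. Because $[g,y]$ is central, $[ga,y] = [g,y][a,y]$, so the task reduces to showing $[g,y]$ lies in the subgroup $A_y := \{[a,y] : a \in P'\} \le Z$ (and $A_y$ is indeed a subgroup of $Z$ by the same centrality argument applied to $[a_1a_2,y]$).

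To pick $y$, set $a_x := x^\sigma x \in P'$. A direct calculation in $\bar P = P/Z$ (which has class $2$, with $\bar P'$ central and fixed pointwise by $\sigma$) shows that for any $b \in P'$,
\[
\overline{(xb)^\sigma (xb)} = \bar a_x \, \bar b^2.
\]
Because $p$ is odd, squaring is a bijection on the abelian $p$-group $P'/Z$, so I can solve $\bar b^2 = \bar a_x^{-1}$. Setting $y := xb$ then gives $y^\sigma y \in Z$, so $y^\sigma = w_y y^{-1}$ for some $w_y \in Z$.

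With this $y$, take $g$ with $[g,y] \in Z$ and consider $a := g^\sigma g \in P'$. Since $w_y$ is central and $[g,y]$ is central, $[g, y^\sigma] = [g, y^{-1}] = [g,y]^{-1}$. Applying $\sigma$ and using that $\sigma$ inverts $Z$ gives $[g^\sigma, y] = [g, y^\sigma]^\sigma = [g,y]$, and then $[a,y] = [g^\sigma, y]^g [g,y] = [g,y]^2$. Thus $[g,y]^2 \in A_y$. Finally, because $p$ is odd and $[g,y] \in Z$ has $p$-power order, $\langle [g,y] \rangle = \langle [g,y]^2 \rangle \le A_y$, so $[g,y] \in A_y$, which supplies the required $a' \in P'$ with $[ga',y]=1$. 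The main delicacy lies in tracking these commutator identities under $\sigma$ in a class-$3$ group, where $[g,y]$ is central but $g$ and $y$ themselves are not; reducing to $P/Z$ wherever possible keeps the bookkeeping manageable.
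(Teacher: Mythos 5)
Your proof is correct; I checked the key identities and they all hold (in particular $[ga,y]=[g,y][a,y]$ and the fact that $a\mapsto[a,y]$ is a homomorphism $P'\to Z$ use only that $[g,y]$ and $[P',P]$ lie in $Z\le Z(P)$, the normalization $\overline{(xb)^\sigma(xb)}=\bar a_x\bar b^2$ uses that $\sigma$ centralizes $P'/Z$ and that $P'/Z$ is central in $P/Z$, and the final extraction of $[g,y]$ from $[g,y]^2$ uses oddness). The route differs in a genuine way from the paper's. The paper normalizes representatives exactly: it shows, via Fitting's lemma applied to $\langle P',r\rangle/Z$ and then a Frobenius-action argument on $\langle s,Z\rangle$ to lift through $Z$, that every coset $rP'$ with $r\notin P'$ contains an element genuinely inverted by $\sigma$; it applies this twice, once to get $y$ with $y^\sigma=y^{-1}$ and once to replace each $d\in D(y)\setminus P'$ by an inverted $c\in dP'$, whereupon $[c,y]$ is $\sigma$-fixed in $Z$ and hence trivial, so $c\in C_P(y)$. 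You never produce exactly inverted elements: you only arrange $y^\sigma y\in Z$ by solving a square-root equation in $P'/Z$, keep the original $g$, compute $[g^\sigma g,y]=[g,y]^2$, and then land $[g,y]$ in the image subgroup $[P',y]\le Z$ using odd order, correcting $g$ by an element of $P'$. Your version is somewhat more computational but avoids both Fitting's lemma and the lifting step; the paper's version buys a cleaner intermediate statement (every nontrivial coset of $P'$ contains a $\sigma$-inverted element) that it can reuse, and it identifies a specific nice representative of each coset of $P'$ in $D(y)$ rather than just the existence of a correcting element.
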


\begin{proof}
Let $\sigma$ be the given automorphism of order $2$.  For each element $g \in P$, define $D (g)/Z = C_{P/Z} (gZ)$.  Since $P'/Z$ is central in $P/Z$, we see that if $h \in gP'$, then $D(h) = D (g)$.

Suppose $r \in P \setminus P'$.  Define $R = \langle P',r \rangle$.  Since $P'/Z$ is central, $R/Z$ is abelian.  We know that $\sigma$ inverts $rP'$, so $\sigma$ acts on $R$.  Notice that $C_{R/Z} (\sigma) = P'/Z$, so by Fitting's lemma, $R/Z = P'/Z \times [R,\sigma]/Z$.  Thus, the coset $rP'$ contains an element $s \in [R,\sigma]$.  Notice that $\langle s, Z \rangle$ is abelian, and $\sigma$ acts Frobeniusly on both $\langle s, Z \rangle/Z$ and $Z$, so $\sigma$ acts Frobeniusly on $\langle s, Z \rangle$.  We conclude that $\sigma$ inverts $s$.

Now, consider $x \in P \setminus P'$.  By the previous paragraph, we can find an element $y \in xP'$ so that $y^\sigma = y^{-1}$.  Observe that $C_P (y) \le D (y)$ and $P' \le D (y)$, so $C_P (y) P' \le D (y)$.

Consider $d \in D(y)$.  We need to show that $d \in C_P (y) P'$.  If $d \in P'$, then this holds, so we may assume $d \not\in P'$.  By the previous paragraph, we can find $c \in dP'$ so that $c^\sigma = c^{-1}$.  We know that $c \in D (y)$, so $[c,y] \in Z$.  It follows that $[c,y]^\sigma = [c^\sigma,y^\sigma] = [c^{-1},y^{-1}] = [c,y]$.  Since $\sigma$ inverts all the elements of $Z$, this implies that $[c,y] = 1$, and so, $c \in C_P (y)$.  We now have $dP' = cP' \le C_P (y) P'$, and we conclude that $d \in C_P (y) P'$.  This proves that $D (y) = C_P (y)P'$, and this proves the result.
\end{proof}

This next theorem proves Theorem \ref{thm1} under the hypothesis that $p$ is an odd prime.  For nonsolvable groups, we apply a result that is found in \cite{Gagola} which relied on the classification of finite simple groups.  Thus, the nonsolvable case of this result also relies on the classification of finite simple groups.  On the other hand, if one assumes solvability, then the classification is not needed.

\begin{theorem} If $(G,N)$ is a $p$-Gagola pair where $p$ is odd, then $|N|^2 \le |G:N|_p$.
\end{theorem}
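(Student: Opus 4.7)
The plan is to split the argument into cases based on whether $G$ is solvable and whether it is $p$-closed, reducing each case to one of the preceding lemmas. By Lemma \ref{|N|p^2} we may assume $|N| \ge p^3$. If $G$ is nonsolvable, the conclusion follows from a classification-dependent theorem of Gagola in \cite{Gagola}, so I henceforth assume $G$ is solvable and let $P$ denote a Sylow $p$-subgroup.

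When $G$ is not $p$-closed, Lemma \ref{aff} together with $|N| > 9$ forces $G/{\bf O}_p (G) \le \Gamma(N)$, with $|N| = p^{ap}$ for some $a \ge 1$ and with $G/{\bf O}_p (G)$ admitting a normal $p$-complement. Consequently $|G:{\bf O}_p (G)|_p$ divides the small quantity $(ap)_p$, so the bulk of $|P:N|$ must come from ${\bf O}_p (G)/N$. Since the Gagola character restricts to a fully ramified $\lambda \in \irr N$ with respect to $P/N$, the pair $({\bf O}_p (G), N)$ inherits the Camina property. I would apply Lemma \ref{abel pair} to $({\bf O}_p (G), N)$ if ${\bf O}_p (G)/N$ is abelian, and otherwise invoke Lemma \ref{abel} with a $G$-normal abelian $p$-subgroup $M$ satisfying $N < M \le {\bf O}_p (G)$ and $|M:N| \ge |N|$.

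When $G$ is $p$-closed, $G = P \rtimes H$ for a Hall $p'$-complement $H$ of order $|N|-1$. Since $p$ is odd, $|N|-1$ is even, and any involution $t \in H$ inverts $N$ because $NH$ is a Frobenius group by \cite{ChMc}. If $P/N$ is abelian, then $(P,N)$ is a Camina pair with abelian quotient and Lemma \ref{abel pair} yields $|P:N| \ge |N|^2$. If $P/N$ is nonabelian but $Z(P) \cap P'$ is large in the sense that $|(Z(P) \cap P'):N| \ge |N|$, Lemma \ref{abel} with $M = Z(P) \cap P'$ suffices. The remaining case is when $P/N$ is nonabelian and $Z(P) \cap P'$ is small; here I plan to apply Lemma \ref{class3} with $Z = N$ and automorphism $t$. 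Its hypotheses must then be verified: $N \le Z(P) \cap P'$ follows from Gagola's theorem and the Camina property, $[P', P] \le N$ follows from the Camina structure combined with the smallness of $Z(P) \cap P'$ forcing $P$ to have nilpotence class exactly $3$, and the action requirements on $t$ (inversion on $P/P'$ and on $N$, triviality on $P'/N$) come from a coprime-action analysis exploiting the Frobenius structure of $NH$. The conclusion of Lemma \ref{class3}, together with the Camina identity $|C_P(y)| = |C_{P/N}(yN)|$ for $y \in P \setminus N$, then forces $|P:N| \ge |N|^2$ by a direct count.

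The main obstacle will be this last subcase: verifying that $P$ has nilpotence class exactly $3$ with $[P',P] \le N$, and that the coprime involution $t$ acts on $P/P'$ without fixed points while acting trivially on $P'/N$, requires a careful interplay between the Camina/Gagola structure of $(P, N)$ and the Frobenius action of $H$ on $P$. This is where the odd-$p$ hypothesis becomes essential, since it guarantees the existence of the involution $t$ and the clean coprime action used throughout.
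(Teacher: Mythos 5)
Your opening reduction via Lemma \ref{|N|p^2} and your instinct to exploit an involution together with Lemma \ref{class3} are in the right spirit, but the proposal leaves the genuinely hard cases unproved. First, the nonsolvable case does not simply ``follow from'' a theorem of Gagola: Theorem 5.6 of \cite{Gagola} only describes $G/{\bf O}_p(G)$ (a normal ${\rm SL}_2(q)$, ${\rm SL}_2(5)$, or ${\rm SL}_2(13)$); one still has to take the unique minimal normal subgroup $L/{\bf O}_p(G)$ of order $2$, set $Y \le L$, $M = [{\bf O}_p(G),Y]$, and run the same class-$3$ argument as in the solvable case. Second, in the non-$p$-closed solvable case you simply posit a $G$-normal abelian subgroup $M$ with $N < M \le {\bf O}_p(G)$ and $|M:N| \ge |N|$ whenever ${\bf O}_p(G)/N$ is nonabelian; no such subgroup is constructed, and none need exist --- producing a usable normal subgroup in this situation is exactly the difficulty the theorem must overcome. (Also, your claim that $({\bf O}_p(G),N)$ ``inherits the Camina property'' is only justified, via \cite{ChMc}, when ${\bf O}_p(G)$ is the full Sylow $p$-subgroup.)

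Third, in the $p$-closed case your plan to apply Lemma \ref{class3} to the whole Sylow subgroup $P$ with $Z = N$ requires $[P',P] \le N$, an involution $t$ inverting every element of $P/P'$, and $t$ centralizing $P'/N$. None of these follows from ``smallness of $Z(P)\cap P'$'' or from the Frobenius action of $H$ on $N$: the involution inverts $N$, but it can have many fixed points on $P/N$, and the Camina/Gagola condition does not force $P$ to have class $3$ over $N$. Moreover your final ``direct count'' also needs $|P:P'| \ge |N|$, which you never establish. The paper circumvents all of this by working not with $P$ but with $M = [{\bf O}_p(G),K]$, where $K$ has Zsigmondy-prime order dividing $p^a-1$ and ${\bf O}_p(G)K$ is normal in $G$ (Lemma \ref{aff} plus Lemma \ref{comm}): irreducibility of $M/M'$ under $K$ gives $|M:M'| \ge |N|$; since $K$ centralizes the subgroup $Y$ of order $2$, which is normal in $H$ by Theorem \ref{solv Frob}, one gets $C_M(Y) \le M'$, so $Y$ inverts $M/M'$; if $K$ acts nontrivially on $M'/N$ the bound is immediate, and otherwise the three subgroups lemma yields $[M,M'] \le N$. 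Only then does Lemma \ref{class3} apply, followed by the index computation combining $|D(x):C_G(x)| = |N|$ with $|M:M'| \ge |N|$. Without these intermediate constructions your outline does not close.
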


\begin{proof}
By Lemma \ref{|N|p^2}, we may assume that $|N| = p^a$ where $a \ge 3$.  We claim that it suffices to find a normal $p$-subgroup $M$ and a subgroup $Y$ of order $2$ so that $[M',M] \le N \le M'$, $|M:M'| \ge |N|$, and $Y$ acts Frobeniusly on $M/M'$.  (We know that $Y$ must act Frobeniusly on $N$.)  If $M' = N$, then since $C_N (Y) = 1$, we conclude that $C_M (Y) = 1$.  It follows that $Y$ acts Frobeniusly on $M$.  We conclude that $M$ is abelian which is a contradiction.  Thus, we may assume that $N < M'$.  Since $[M',M] \le N$ and $N$ is central in $M$ (see page 367 of \cite{Gagola}), this implies that $M$ has nilpotence class $3$.

We know that $C_M (Y) \le M'$ since $Y$ acts Frobeniusly on $M/M'$.  Since $M'/N$ is central in $M/N$, we see that $C_M (Y) N$ is normal in $M$.  Notice that $Y$ will act Frobeniusly on $M/(C_M (Y) N)$, so $M/(C_M (Y) N)$ is abelian, and thus, $M' = C_M (Y) N$. It follows that $Y$ centralizes $M'/N$.  Hence, we may apply Lemma \ref{class3}.  By that result, we can find an element $x \in M \setminus M'$ so that $C_{M/N} (xN) = C_M (x) M'/N$.  Let $D (x)/N = C_{G/N} (xN)$, and this implies that $D (x) \cap M = C_M (x) M'$.

We now compute
$$
|M D(x):C_G (x) \cap M'| = |M D(x):M| |M:M'| |M': C_G (x) \cap M'|.
$$
We know that $|MD (x):M| = |D(x): D(x) \cap M| = |D(x): C_M (x)M'|$ and $|M':C_G (x) \cap M'| = | C_M (x) M':C_M (x)|$.  We have
$$
|D(x): C_M (x)M'| | C_M (x) M':C_M (x)| = |D (x):C_M (x)|.
$$
Since $|D (x): C_M (x)| = |D (x): C_G (x) \cap M|$ is divisible by $|D(x):C_G (x)| = |N|$ and $|M:M'| \ge |N|$, this yields the desired conclusion.  We now work to find such subgroups $M$ and $Y$.

Suppose first that $G$ is solvable.  Since $a \ge 3$, we know that $|N| - 1 = p^a - 1$ has a Zsigmondy prime divisor $q$.  As we noted earlier, $q \ne 3$.  Let $H$ be a Hall $p$-complement of $G$.  We know from Lemma 4.3 in \cite{ChMc} that $NH$ is a $2$-transitive Frobenius group.  Since $a \ge 3$, we may apply Huppert's theorem (Proposition 19.10 of \cite{perm} or Theorem 6.9 of \cite{MaWo}) to see that $H$ is isomorphic to a subgroup of $\Gamma (N)$, and so, $H$ has a normal subgroup $K$ of order $q$.  We claim that ${\bf O}_p (G) K$ is normal in $G$.  If $G$ is $p$-closed, then this is immediate.  If $G$ is not $p$-closed, then the existence of a Zsigmondy prime implies that we are in case 2 of Lemma \ref{aff}. In particular, $G/{\bf O}_p (G) \le \Gamma (N)$, and thus, ${\bf O}_p (G)K$ is normal in $G$.  Hence, we may use Lemma \ref{comm} to see that $N < [{\bf O}_p (G), K] = M$.  Notice that $K$ acts Frobeniusly on $M/M'$.  If $M' = 1$, then $K$ acts Frobeniusly on $M/N$, and since $|K| = q$ is a Zsigmondy prime divisor of $p^a - 1$, we have $|M:N| \ge |N|$.  And by Lemma \ref{abel}, we have the result.  Thus, we may assume that $M' > 1$, and since $N$ is the unique minimal normal subgroup of $G$, this implies that $N \le M'$.  This implies that ${\bf O}_p (G) = M C_{{\bf O}_p (G)} (K)$, and by the Frattini argument, $G = {\bf O}_p (G) N_G (K) = M N_G (K)$. If $M/M'$ is not irreducible under the action of $K$, then $|M:M'| \ge p^{2n}$, and the result follows.  Thus, we may assume that $M/M'$ is irreducible under the action of $K$.


Since $p$ is odd, $2$ divides $|H|$.  Let $Y \le H$ be a subgroup of order $2$.  We know that $Y$ is normal in $H$ by Theorem \ref{solv Frob}.  In particular, $K$ centralizes $Y$.  Hence, $N_G (K)$ normalizes $C_M (Y)$.  Since $M/M'$ is abelian, $M$ normalizes $C_M (Y) M'$, and so, $C_M (Y) M'$ is normal in $G$.  If $M = C_M (Y) M' = C_M (Y) \Phi (M)$, then Frattini's theorem implies $M = C_M (Y)$, and this is a contradiction since $Y$ acts Frobeniusly on $N$.  Thus, $C_M (Y) M' < M$.  Since $M/M'$ is irreducible under the action of $K$, we have $C_M (Y) \le M'$.  We deduce that $Y$ acts Frobeniusly on $M/M'$.


If $K$ acts nontrivially on $M'/N$, then we will have $|M':N| \ge |N|$, and the result will follow since $|M:N| \ge |N|^2$.  Thus, we may assume that $K$ centralizes $M'/N$.  It follows that $[M',K] \le N$.  We then have $[M',K,M] \le [N,M] = 1$ since $N \le Z (M)$, and $[M,M',K] \le [M',K] = N$.  By the three subgroups lemma, this implies that $[K,M,M'] \le N$, and since $M = [K,M]$, we conclude that $[M,M'] \le N$.  We now have the desired subgroups $M$ and $Y$, and the result follows in this case.

Finally, we may assume $G$ is not solvable and $|N| = p^a$ where $a > 2$.  In this case, we may apply Theorem 5.6 of \cite{Gagola} to see that $G/{\bf O}_p (G)$ either has ${\rm SL} (2,q)$ where $q$ is a power of $p$ as normal subgroup whose quotient is a $p$-group, has ${\rm SL} (2,5)$ as a normal subgroup of index $2$ that is a nonsplit extension, or is isomorphic to ${\rm SL} (2,13)$.  In all of these cases, $G/{\bf O}_p (G)$ has a unique minimal normal subgroup $L/{\bf O}_p (G)$ which is of order $2$.  Taking $Y$ to be a subgroup of order $2$ in $L$, we have $L = {\bf O}_p (G) Y$ is normal in $G$.

Again, we may apply Lemma \ref{comm} to find $M$ so that $N < M = [{\bf O}_p (G), Y]$.  It is not difficult to see that $Y$ acts Frobeniusly on $M/M'$.  Let $C/N = C_{G/N}{M/N}$.  Since ${\bf O}_p (G)/N$ is a $p$-group, we know that $Z({\bf O}_p (G)/N) \cap M/N > 1$, and since $M/N$ is a chief factor for $G$, it follows that $M/N \le Z ({\bf O}_p (G)/N)$.  This implies that ${\bf O}_p (G) \le C$.  We see that $C \cap L= {\bf O}_p (G)$.  Since $L/{\bf O}_p (G)$ is the unique minimal normal subgroup of $G/{\bf O}_p (G)$, we conclude that $C = {\bf O}_p (G)$.

Let $K$ be a subgroup of $G$ of order $q$.  Since $K$ is not contained in $C$, we see that $K$ acts nontrivially on $M/N$.  If $M' = 1$, then it acts Frobeniusly on $M$, and this implies that $|M:N| \ge |N|$, we obtain the result by Lemma \ref{abel}.  Thus, we may assume that $M' > 1$.  By the uniqueness of $N$, this implies $M' \ge N$.  Since $K$ acts nontrivially on $M/N$, it acts nontrivially on $M/M'$.  It follows that $|M:M'| \ge |N|$.  If $K$ acts nontrivially on $M'/N$, then $|M':N| \ge |N|$, and the result holds.  Thus, we may assume that $K$ centralizes $M'/N$.  This implies that the centralizer of $M'/N$ in $G/{\bf O}_p (G)$ is nontrivial.  By the uniqueness of $L/{\bf O}_p (G)$, this implies that $L$ centralizes $M'/N$, and so $Y$ centralizes $M'/N$.  We have $[M',Y,M] \le [N,M] \le N$.  Also, $[M,M',Y] \le [M',Y] \le N$.  By the Three Subgroups Lemma (see Lemma 4.9 of \cite{isa}), $[Y,M,M'] \le N$, and since $[M,Y] = M$, we obtain $[M,M'] \le N$.  We now have subgroups $M$ and $Y$ as desired.    This completes the proof.
\end{proof}






\section{$p=2$ preliminary results}

We now left with the case that $N$ is a $2$-group.  We break up the case when $p = 2$ into two subcases: $G$ is solvable and $G$ is not solvable.  In this section, we include some results that are common to both cases.

\begin{lemma} \label{five}
Let $K$ have normal subgroup $N < M$ so that $N$ is an elementary abelian $2$-group, $M/N$ is cyclic of odd order, $|K:M| = 2$, $C_{N}(M/N) = 1$, and $K/N$ is a dihedral group.  Then $K \setminus M$ contains an involution.
\end{lemma}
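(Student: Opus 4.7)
The plan is to start with an arbitrary $t \in K \setminus M$ and adjust it by an element of $N$ so that the adjusted lift squares to the identity. Since $K/N$ is dihedral with cyclic subgroup $M/N$ of odd order $|M:N|$, every coset outside $M/N$ has order $2$; in particular $t^2 \in N$, and conjugation by $t$ inverts every element of $M/N$.

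First I would invoke Schur--Zassenhaus: because $|N|$ is a power of $2$ and $|M:N|$ is odd, $N$ has a cyclic complement $X$ in $M$, so $M = NX$ with $X \cap N = 1$ and $|X| = |M:N|$. The hypothesis $C_N(M/N) = 1$ becomes $C_N(X) = 1$ (since $N$ is abelian and $M = NX$), and a standard semidirect-product calculation then gives $N_M(X) = X$. Now $X^t$ is another complement to $N$ in the normal subgroup $M$, so Schur--Zassenhaus furnishes $n_0 \in N$ with $X^t = X^{n_0}$, whence $t_1 := t n_0^{-1}$ lies in $K \setminus M$ and normalizes $X$.

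To finish, I would determine how $t_1$ acts on $X$. Because $K/N$ is dihedral, the coset $t_1 N$ inverts $XN/N \cong X$; since $t_1$ stabilizes $X$ and $X \cap N = 1$, this forces $t_1 x t_1^{-1} = x^{-1}$ for every $x \in X$. Consequently conjugation by $t_1^2$ induces the square of inversion on $X$, which is the identity, so $t_1^2 \in C_K(X)$. On the other hand $t_1^2 \in N$, because $t_1 N$ has order $2$ in the dihedral quotient. Hence $t_1^2 \in C_N(X) = 1$, and $t_1$ is the desired involution in $K \setminus M$.

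The only step that is not mere bookkeeping is the Schur--Zassenhaus correction producing $t_1$ from $t$; this is what allows the hypothesis $C_N(M/N) = 1$ to kill $t_1^2$ at the end. Without the correction the naive square $t^2$ has no reason to centralize $X$, and one would otherwise be stuck trying to show $t^2 \in [N,t]$, which is not obviously available.
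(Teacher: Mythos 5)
Your proof is correct, but it takes a genuinely different route from the paper's. The paper lets $C$ be a Hall $2$-complement of $M$ (your $X$), applies the Frattini argument to get $K = N\,N_K(C)$, shows $N_K(C)\cap N \le C_N(M/N) = 1$ by a short commutator computation, and then simply notes that a Sylow $2$-subgroup of $N_K(C)$ is nontrivial and meets $N$ trivially, so it contains an involution, which cannot lie in $M$ since $N$ is the unique Sylow $2$-subgroup of $M$. You instead correct an arbitrary lift $t$ of a ``reflection'' by an element of $N$, using Schur--Zassenhaus conjugacy of complements, so that it normalizes $X$, and then use the dihedral structure (inversion on $X$) together with $C_N(X)=C_N(M/N)=1$ to show the corrected element squares to the identity. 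Both arguments pivot on the same two ingredients: the odd-order complement to $N$ in $M$ and the hypothesis $C_N(M/N)=1$ forcing the relevant intersection with $N$ to be trivial. What your version buys is an explicit, constructive involution; what the paper's buys is economy of hypotheses --- it never uses that $K/N$ is dihedral or that $M/N$ is cyclic (only that $M/N$ has odd order), whereas your last step genuinely needs the inversion action. A small remark: the fact $N_M(X)=X$ that you establish is never used afterwards and can be dropped.
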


\begin{proof}
Let $C$ be a Hall $2$-complement of $M$.  Since $C$ is a Hall subgroup of $M$, we can apply the Frattini argument to see that $K = M N_G(C) = NC N_G (C) = N N_G (C)$.  This implies that $2$ divides $|N_G (C)|$.  Let $A = N_G (C) \cap N$.  Then $[A,C] \le C$ and $[A,C] \le [N,C] \le N$, so $[A,C] \le C \cap N = 1$.  This implies that $[A,C] = 1$.  We deduce that $[A,M] = [A,NC] \le [A,N][A,C] = [A,N] \le N$, and we conclude that $A \le C_{N}(M/N) = 1$.  Thus, we have $N_G (C) \cap N = 1$.  Let $T$ be a Sylow $2$-subgroup of $N_G (C)$.  It follows that $T > 1$ and $T \cap N = 1$.  Thus, $T$ contains an involution which is not in $N$.  Since $N$ is the Sylow $2$-subgroup of $M$, this implies that $T$ contains an involution not in $M$.
\end{proof}

The following fact is essentially the heart of the proof when $p = 2$ of the proof of Theorem 5.1 of \cite{large}.

\begin{lemma} \label{six}
Let $(G,N)$ be a $2$-Gagola pair.  Then every involution in $G/N$ lies in ${\bf O}_2 (G)/N$.
\end{lemma}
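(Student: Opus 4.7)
My plan is to take any $g \in G$ with $g^2 \in N$ and $g \notin N$ and show that $g$ lies in ${\bf O}_2(G)$.

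The first step is to establish $C_G(N) = {\bf O}_2(G)$, which reduces the problem to showing that $g$ centralizes $N$. The inclusion ${\bf O}_2(G) \subseteq C_G(N)$ follows from $N \leq Z({\bf O}_2(G))$, which is known from Gagola. For the reverse, suppose $y \in C_G(N)$ had odd prime order $q$. Then $y \notin N$ (as $N$ is a $2$-group), and by the Camina property $y$ and $yn$ are $G$-conjugate for every $n \in N$. But $y$ and $n$ commute, so $|yn| = 2q \neq q = |y|$, a contradiction. Hence $C_G(N)$ has no nontrivial odd-order elements, giving $C_G(N) = {\bf O}_2(G)$.

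Now let $\sigma$ denote the action of $g$ on $N$ by conjugation. Since $g^2 \in N$ centralizes $N$, $\sigma^2 = 1$. A direct calculation gives $(gn)^2 = g^2 \cdot [g,n]$ for all $n \in N$, and by Camina every $gn$ is conjugate to $g$ in $G$ and so has the same order. If $g^2 = 1$, then $(gn)^2 = 1$ for every $n$, forcing $[g,n] = 1$ for all $n$ and hence $\sigma = 1$, which completes the proof by the preceding paragraph. Otherwise $g^2 \in N \setminus \{1\}$, so $g$ has order $4$, and the requirement $(gn)^2 \neq 1$ translates to $g^2 \notin [g, N]$, while $g^2 \in C_N(g)$ because $g$ commutes with itself.

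The hardest step will be to derive a contradiction from $g^2 \in C_N(g) \setminus [g, N]$. Viewing $\sigma + 1$ as an $\mathbb{F}_2$-linear map on the elementary abelian group $N$, one has $[g, N] = (\sigma+1)(N)$ and $C_N(g) = \ker(\sigma+1)$; the relation $(\sigma+1)^2 = 0$ gives $[g, N] \subseteq C_N(g)$, and the rank-nullity formula shows that equality is equivalent to $|C_N(g)|^2 = |N|$. I would establish this equality case by case from the structure of $\bar G := G/{\bf O}_2(G)$, which acts faithfully on $N$. In the solvable case, Lemma~\ref{aff} places $\bar G$ inside $\Gamma(N)$; because the normal $2$-complement of $\bar G$ has order $|N|-1 = |\Gamma_0(N)|$, I obtain $\Gamma_0(N) \leq \bar G$, and any involution of $\bar G$ then has the form $(c,\tau)$ with $\tau$ the unique involution of the Galois group of $\mathbb{F}_{2^n}$. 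A direct field computation yields $|C_N(\sigma)| = 2^{n/2}$, providing the needed equality. For the nonsolvable case I would use the Gagola-type structural classification of $G/{\bf O}_2(G)$ to verify the analogous equality for the involutive actions in the transitive linear groups that arise; this echoes the paper's own division of the main argument into solvable and nonsolvable subcases.
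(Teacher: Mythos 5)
Your reduction is sound and genuinely different from the paper's argument: establishing $C_G(N)={\bf O}_2(G)$ via the Camina property, forcing $g^2\notin[g,N]$ for an order-$4$ element $g$ with $g^2\in N$, and then noting that $[g,N]={\rm im}(\sigma+1)\subseteq{\rm ker}(\sigma+1)=C_N(g)$ with equality precisely when ${\rm dim}\,C_N(\sigma)=n/2$, is all correct. The solvable branch essentially works as well, with two small repairs: your claim that $\Gamma_o(N)\le G/{\bf O}_2(G)$ does not follow from the order count (a transitive subgroup of $\Gamma(N)$ of order $2^n-1$ need not be $\Gamma_o(N)$, cf.\ the situation in Lemma \ref{prime pow}), but you do not need it, since every involution of $\Gamma(N)$ already has the form $c\tau$ with $\tau$ the unique Galois involution because $\Gamma_o(N)$ has odd order (and if $n$ is odd there are no involutions at all, so $\sigma=1$); and you must dispose separately of the $2$-closed case, where Lemma \ref{aff} does not apply but the conclusion is trivial because a $2$-element lies in the normal Sylow $2$-subgroup.

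The genuine gap is the nonsolvable case. Your whole argument there rests on the claim that every involution of $G/{\bf O}_2(G)$ has fixed space of dimension exactly $n/2$ on $N$, and you only assert that this ``would be verified'' from the structure of the quotient. That verification is a real piece of work: one must invoke Gagola's Theorem 5.5 (so the classification of finite simple groups enters), justify that the extension of the natural ${\rm SL}_2(q)$-action on $N$ to all of $G/{\bf O}_2(G)$ is semilinear (a Schur-lemma argument, since the outer automorphisms are field automorphisms), and then check both kinds of involutions — transvections in ${\rm SL}_2(q)$ and properly semilinear involutions, the latter by Galois descent. These facts are true, but none of this appears in your proposal, so as written the nonsolvable case is unproved. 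Note that the paper's own proof avoids the case division and the classification entirely: given $t$ with $tN$ an involution outside ${\bf O}_2(G)/N$, Theorem 2.13 of \cite{isa} produces an element $cN$ of odd prime order inverted by $tN$, and Lemma \ref{five} applied to $\langle N,c,t\rangle$ yields an involution that must be conjugate into the coset $Nt$, whose elements all have order $4$ by the Camina property — a short, uniform, classification-free contradiction.
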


\begin{proof}
Let $t \in G$ so that $tN$ is an involution.  If $t \in {\bf O}_2 (G)$, then the result holds; so assume $t \not\in {\bf O}_2 (G)$.  In particular, we know that $t$ does not centralize $N$ by \cite{Gagola}.  By Theorem 2.13 of \cite{isa}, $tN$ must invert some nontrivial element $cN \in G/N$ of odd prime order.  Let $M = \langle N, c \rangle$ and let $K = \langle M, t \rangle$.  Let $T = \langle N, t \rangle$, and observe that $T$ is a Sylow $2$-subgroup of $K$.  Notice that $T$ is not abelian, so $T$ must contain an element of order $4$.  Since $N$ is elementary abelian, this element must lie in $T \setminus N$.  Since $T:N| = 2$, we see that $T \setminus N = Nt$.  Since $(G,N)$ is a Gagola pair, we know that all the elements in $Nt$ are conjugate, and so, they all have order $4$.  On the other hand, we may now apply Lemma \ref{five} to see that $K \setminus N$ must contain an involution $s$.  This yields a contradiction since $s$ must be conjugate to some element of $T \setminus N$, and we have seen that this set contains no involutions.
\end{proof}

We begin with the following observation which appears in my paper \cite{pCam}.

\begin{lemma} \label{seven}
Let $(G,N)$ be a $2$-Gagola pair.  If ${\bf O}_2 (G)/N$ does not have exponent $2$, then $|G:N| \ge |N|_2$.  In particular, if ${\bf O}_2 (G)/N$ is not elementary abelian, then $|G:N|_2 \ge |N|^2$.
\end{lemma}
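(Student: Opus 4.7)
The starting ingredients are the Camina-pair identity $|C_G(g)|=|C_{G/N}(gN)|$ for every $g \in G \setminus N$, valid because every Gagola pair is a Camina pair, together with Gagola's basic structural fact that $N \le Z({\bf O}_2(G))$. The plan is to apply these to a well-chosen element of ${\bf O}_2(G) \setminus N$ and bound the $2$-part of the resulting centralizer.

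Because ${\bf O}_2(G)/N$ is not of exponent $2$, I can choose $x \in {\bf O}_2(G)$ with $x^2 \notin N$, so $xN$ has order $o(xN) \ge 4$ in $G/N$. Define $D(x) = \{g \in G : [x,g] \in N\}$, which is a subgroup of $G$ since $N \triangleleft G$, and observe that $D(x)/N = C_{G/N}(xN)$. The Camina identity then yields $|D(x) : N| = |C_G(x)|$, and so $|D(x)| = |N| \cdot |C_G(x)|$. Since $N \le Z({\bf O}_2(G))$, the $2$-subgroup $\langle N, x\rangle$ is contained in $C_G(x)$, and its order equals $|N|\cdot o(xN) \ge 4|N|$. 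Therefore $|C_G(x)|_2 \ge 4|N|$, giving
\[
|D(x)|_2 \;=\; |N|\cdot|C_G(x)|_2 \;\ge\; 4|N|^2.
\]
Since $D(x) \le G$, this forces $|G|_2 \ge 4|N|^2$, so $|G:N|_2 \ge 4|N|$. In particular $|G:N| \ge |N| = |N|_2$, which is the first stated conclusion.

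For the sharper ``in particular'' bound $|G:N|_2 \ge |N|^2$---which, for the $2$-group ${\bf O}_2(G)/N$, rests on the same hypothesis since exponent $2$ and elementary abelian coincide there---the plan is first to combine the single-element bound with the Gagola fact that $|G:N|_2 = |P:N|$ is a perfect square and a power of $2$, which closes the case of small $|N|$. For larger $|N|$ one exploits the non-elementary-abelian structure (either an element of order $\ge 8$ modulo $N$, or, if ${\bf O}_2(G)/N$ is nonabelian, a non-central coset) to produce a second element $y\in{\bf O}_2(G)$ whose image in $D(x)/N$ is independent of $\langle N,x\rangle/N$, and then to show via the Camina identity that $y$ contributes an additional factor of $|N|$ to $|D(x)|_2$. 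The main obstacle is arranging this independence cleanly against the Camina-pair bookkeeping; the full refinement is carried out in \cite{pCam}.
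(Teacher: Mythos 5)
Your opening computation is correct as far as it goes, but it only delivers $|G:N|_2 \ge 4|N|$, and the clause you match it against ($|G:N| \ge |N|_2$, which for a $2$-group $N$ just says $|G:N|\ge |N|$) is evidently a misprint in the statement; the actual content of the lemma, and the only part used later in the paper (in the nonsolvable $p=2$ argument and in Theorem \ref{2pairclass}), is $|G:N|_2 \ge |N|^2$. That bound you do not prove: you offer a plan (use that $|G:N|_2$ is a square to handle small $|N|$, then find a second element $y$ ``independent'' of $\langle N,x\rangle$ contributing another factor $|N|$) and then defer the ``full refinement'' to \cite{pCam}. This is a genuine gap, since producing that extra factor of $|N|$ is exactly the point that requires an argument, and the case split you envision is not needed and would not by itself close the general case.

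The missing idea is a one-line commutator computation. For any $y \in D(x)$ we have $[x,y]\in N$, and since $N \le Z({\bf O}_2(G))$ and $x \in {\bf O}_2(G)$, $[x^2,y] = [x,y]^x[x,y] = [x,y]^2 = 1$ because $N$ is elementary abelian; hence $D(x) \le C_G(x^2)$. Since $x^2 \in {\bf O}_2(G)\setminus N$, the Camina identity applies to $x^2$ as well as to $x$, giving $|D(x^2):C_G(x^2)| = |N| = |D(x):C_G(x)|$. The chain $N \le C_G(x) \le D(x) \le C_G(x^2) \le D(x^2) \le G$ then shows that $|N|^2$ divides $|D(x^2):N|$, hence divides $|G:N|$, and being a power of $2$ it divides $|G:N|_2$. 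This yields the full conclusion with no case analysis on $|N|$, no use of the squareness of $|G:N|_2$, and no appeal to \cite{pCam}; your centralizer bound via $\langle N,x\rangle$ is simply not strong enough, because it exploits only the order of $xN$ rather than applying the Camina index twice along this chain.
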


\begin{proof}
Consider $x \in {\bf O}_2 (G)$ so that $x^2 \not\in N$.  By \cite{Gagola}, we have $N \le C_G (x)$.  Let $D (x)/N = C_{G/N} (x)$.  We know that $C_G (x) \le D (x)$ and $|D (x):N| = |C_G (x)|$, so $|D(x):C_G (x)| = |N|$.  Suppose that there exists $x \in {\bf O}_2 (G)$   Consider $y \in D (x)$.  Then $[x,y] \in N$.  We observe that $[x^2,y] = [x,y]^x [x,y]$, and since $N$ is central in ${\bf O}_2 (G)$, we have $[x,y]^x = [x,y]$.  Thus, $[x^2,y] = [x,y]^2$, and since $N$ is elementary abelian, we conclude that $[x,y]^2 = 1$.  It follows that $y \in C_G (x^2)$.  Thus, $D (x) \le C_G (x^2)$.  Since $x^2 \in {\bf O}_2 (G)$, we deduce that $|N|^2 = |D(x^2):C_G(x^2)||D(x):C_G (x)|$ divides $|G:N|$.  This proves the result when ${\bf O}_2 (G)/N$ does not have exponent $2$.
\end{proof}

\section{$p=2$ and $G$ nonsolvable}

We continue to work on the case when $p = 2$.  In this section, we consider the subcase where $G$ is nonsolvable.  We begin with some number theoretic results.  We would not be surprised if these results were known.

\begin{lemma} \label{one}
Let $a$ be a nonnegative integer.  Then $2^{3^a} \equiv -1 ~({\rm mod}~3^{a+1})$ and $2^{3^a} \not\equiv -1 ~({\rm mod}~3^{a+2})$.
\end{lemma}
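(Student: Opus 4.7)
The plan is a straightforward induction on $a$, exploiting the binomial expansion of a cube to lift the congruence by exactly one factor of $3$ at each step.

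For the base case $a=0$, I would simply observe that $2 = -1 + 3$, so $2 \equiv -1 \pmod{3}$ and (since $-1 \equiv 8 \pmod 9$) $2 \not\equiv -1 \pmod{9}$. This gives both statements with the extra information that $2^{3^0} + 1 = 3$ is exactly divisible by $3^{0+1}$.

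For the inductive step I would actually carry a slightly stronger statement: namely, that $2^{3^a} + 1$ is divisible by $3^{a+1}$ but not $3^{a+2}$, i.e., $2^{3^a} = -1 + c_a \cdot 3^{a+1}$ for some integer $c_a$ with $\gcd(c_a, 3) = 1$. Assuming this at stage $a$, I would cube both sides and expand:
\[
2^{3^{a+1}} = (-1 + c_a \cdot 3^{a+1})^3 = -1 + 3\, c_a\, 3^{a+1} - 3\, c_a^{2}\, 3^{2(a+1)} + c_a^{3}\, 3^{3(a+1)}.
\]
The last two terms are divisible by $3^{2a+3}$, which in turn is divisible by $3^{a+3}$ since $a \ge 0$. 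Therefore
\[
2^{3^{a+1}} \equiv -1 + c_a \cdot 3^{a+2} \pmod{3^{a+3}},
\]
and since $\gcd(c_a, 3) = 1$ the coefficient $c_{a+1} := c_a$ (modulo a unit mod $3$) is again coprime to $3$. This simultaneously shows $2^{3^{a+1}} \equiv -1 \pmod{3^{a+2}}$ and $2^{3^{a+1}} \not\equiv -1 \pmod{3^{a+3}}$, completing the induction.

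There is no real obstacle here beyond bookkeeping; the only thing one must take a little care about is strengthening the inductive hypothesis so that the non-congruence at the next level can actually be extracted. Without tracking that the "error term" $c_a$ is coprime to $3$, one could only lift the divisibility statement and not the sharp non-divisibility statement. With the stronger hypothesis the two statements propagate together in a single inductive step.
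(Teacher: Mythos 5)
Your proof is correct and follows essentially the same route as the paper: induction on $a$, cubing $2^{3^a} = -1 + c_a\,3^{a+1}$ with $3 \nmid c_a$, and using the binomial expansion to see that the coefficient of $3^{a+2}$ remains a unit modulo $3$. The only cosmetic difference is that you discard the higher-order terms modulo $3^{a+3}$ while the paper keeps the exact factorization $-1 + 3^{a+2}c$ with $3 \nmid c$; the content is identical.
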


\begin{proof}
We work by induction on $a$.  Notice that $2^{3^0} = 2^1 = 2 \equiv -1 ~({\rm mod}~3^{0+1})$ and $2 \not\equiv -1 ~({\rm mod}~3^{1+1})$.  This proves the base case.

We now prove the inductive step.  Suppose for some nonnegative integer $a$, we have $2^{3^a} \equiv -1 ~({\rm mod}~3^{a+1})$ and $2^{3^a} \not\equiv -1 ~({\rm mod}~3^{a+2})$.  This implies that $2^{3^a} = -1 + b 3^{a+1}$ where $b$ is an integer that is not divisible by $3$.  Cubing, we obtain $2^{3^{a+1}} = (2^{3^a})^3 = (-1 + b 3^{a+1})^3$.  Using the binomial theorem, we obtain $(-1 + b 3^{a+1})^3 = -1 + 3 b 3^{a+1} - 3 b^2 3^{2(a+1)} + b^3 3^{3(a+1)} = -1 + 3^{a + 2} c$ where $c = b (1 - b 3^{a+1} + b^2 3^{2a+1})$.  It follows that $2^{3^{a+1}} \equiv -1 ~({\rm mod}~3^{a+2})$.  Since $3$ does not divide $b$, it follows that $3$ does not divide $c$, and so, $2^{3^{a+1}} \not\equiv -1 ~({\rm mod}~3^{a+3})$.  This proves the inductive step, and hence, the lemma is proved.
\end{proof}

We make use of the following corollary.

\begin{corollary}\label{onea}
Suppose that $n_3 = 3^a$.  Then $2^n \equiv -1 ~({\rm mod}~3^{a+1})$ if $n$ is odd and $2^n \equiv 1 ~({\rm mod}~3^{a+1})$ if $n$ is even.  In particular, $\left((2^n - 1)(2^n + 1)\right)_3 = 3^{a+1}$.
\end{corollary}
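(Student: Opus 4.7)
The plan is to bootstrap from Lemma \ref{one} by writing $n = 3^a m$ with $\gcd(m,3) = 1$ and then expanding an $m$-th power via the binomial theorem. Since $3^a$ is odd, $n$ and $m$ share the same parity, so the parity condition on $n$ in the statement can be replaced by the corresponding parity condition on $m$.

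First I would invoke Lemma \ref{one} to write $2^{3^a} = -1 + b \cdot 3^{a+1}$ for some integer $b$ with $3 \nmid b$. Then
$$
2^n = (2^{3^a})^m = (-1 + b \cdot 3^{a+1})^m,
$$
and expanding by the binomial theorem, every term with binomial index $k \geq 2$ carries a factor $3^{2(a+1)}$, hence vanishes modulo $3^{a+2}$ (since $2(a+1) \geq a+2$ for $a \geq 0$). The residue is
$$
2^n \equiv (-1)^m + m(-1)^{m-1} b \cdot 3^{a+1} \pmod{3^{a+2}}.
$$
Reducing modulo $3^{a+1}$ gives $2^n \equiv (-1)^m \pmod{3^{a+1}}$, which is $-1$ when $m$ (equivalently $n$) is odd and $1$ when $m$ is even. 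This establishes the first assertion.

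For the ``in particular'' statement, I would first observe that $\gcd(2^n - 1, 2^n + 1)$ divides $2$, and both numbers are odd, so they are coprime; in particular, at most one is divisible by $3$. If $n$ is odd, the first assertion gives $3^{a+1} \mid 2^n + 1$, while the sharper displayed congruence gives $2^n + 1 \equiv mb \cdot 3^{a+1} \pmod{3^{a+2}}$, which is nonzero modulo $3^{a+2}$ because $\gcd(mb, 3) = 1$. Hence $(2^n+1)_3 = 3^{a+1}$ and $(2^n-1)_3 = 1$, so the product has $3$-part $3^{a+1}$. The case $n$ even is handled identically, with $2^n - 1$ playing the role of $2^n + 1$. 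I do not anticipate a serious obstacle: the only point requiring care is tracking that the binomial tail vanishes modulo $3^{a+2}$ while the linear term does not, and both are immediate from Lemma \ref{one}'s guarantee that $3 \nmid b$.
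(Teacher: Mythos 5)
Your proof is correct. For the first assertion it coincides with the paper's argument: both write $n = 3^{a}m$ with $3 \nmid m$ and raise the congruence of Lemma \ref{one} to the $m$-th power. For the ``in particular'' statement, however, you take a genuinely different route. The paper, for $n$ odd, factors $2^{n}+1 = (2^{3^a}+1)\left(\sum_{i=0}^{m-1}(-1)^i (2^{3^a})^i\right)$, shows the cofactor is congruent to $m$ modulo $3$ and hence prime to $3$, and then disposes of even $n$ by induction via $2^{n}-1 = (2^{n/2}-1)(2^{n/2}+1)$. You instead sharpen the congruence to the modulus $3^{a+2}$: writing $2^{3^a} = -1 + b\,3^{a+1}$ with $3 \nmid b$ and expanding $(-1+b\,3^{a+1})^m$ by the binomial theorem, the tail terms vanish modulo $3^{a+2}$ and the linear term $m(-1)^{m-1}b\,3^{a+1}$ survives with $3 \nmid mb$, which pins the exact $3$-part of $2^n+1$ (for $n$ odd) or $2^n-1$ (for $n$ even) in a single computation. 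This is essentially a lifting-the-exponent argument, in the same spirit as the paper's proof of Lemma \ref{one} itself; it buys uniformity over the two parities and avoids both the cyclotomic-style factorization and the induction, while the paper's version uses Lemma \ref{one} purely as a black box on its stated conclusion. Your coprimality observation ($\gcd(2^n-1,2^n+1)=1$ since both are odd) matches the paper's and correctly reduces the product statement to the single factor you analyze.
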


\begin{proof}
Write $b$ so that $n = 3^a b$, and note that $3$ does not divide $b$.  Using Lemma \ref{one}, we have $2^n = (2^{3^a})^b \equiv (-1)^b ~({\rm mod}~3^{a+1})$.  This gives the first conclusion.  Observe that $2^n - 1$ and $2^n + 1$ are relatively prime.  Thus, $3$ only divides one of these.  Suppose now that $n$ is odd.  Then $2^n + 1 = (2^{3^a} + 1) (\sum_{i=0}^{b-1} (-1)^i (2^{3^a})^i)$.  Let $c = \sum_{i=0}^{b-1} (-1)^i (2^{3^a})^i$.  It follows that $c \equiv \sum_{i=0}^{b-1} (-1)^i(-1)^i \equiv \sum_{i=0}^{b-1} 1 \equiv b ~({\rm mod}~3)$ (this uses the fact from Lemma \ref{one} that $3^a \equiv -1 ~({\rm mod}~3)$.  It follows that $3$ does not divide $c$, so $(2^n+1)_3 = (2^{3^a} + 1)_3$, and applying Lemma \ref{one} again, we have that $3^{a+1}$ divides $2^n + 1$ and $3^{a+2}$ does not divide $2^n + 1$, so $(2^n + 1)_3 = 3^{a+1}$.  This gives the result $n$ is odd.  If $n$ is even, then $n = 2m$ where $m_3 = n_3$.  Working by induction on $n$, we have $(2^n - 1)_3 = (2^m - 1)(2^m + 1)_3 = 3^{a+1}$, and this proves the result when $n$ is even.
\end{proof}

We now get an application to the order of $|{\rm SL}_2 (2^n)|$.

\begin{corollary} \label{two}
Suppose that $n_3 = 3^a$.  Then $|{\rm SL}_2 (2^n)|_3 = 3^{a+1}$.
\end{corollary}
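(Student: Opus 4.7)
The plan is to reduce this directly to Corollary \ref{onea}. Recall the standard order formula
\[
|{\rm SL}_2(2^n)| = 2^n (2^n - 1)(2^n + 1) = 2^n(2^{2n} - 1).
\]
Since the factor $2^n$ is a power of $2$ and therefore coprime to $3$, it contributes nothing to the $3$-part of the order. Thus
\[
|{\rm SL}_2(2^n)|_3 = \bigl((2^n - 1)(2^n + 1)\bigr)_3.
\]

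Next I would invoke Corollary \ref{onea}, which was proved precisely to handle this quantity: under the hypothesis $n_3 = 3^a$, it states that $\bigl((2^n - 1)(2^n + 1)\bigr)_3 = 3^{a+1}$. Combining this with the previous display yields $|{\rm SL}_2(2^n)|_3 = 3^{a+1}$, as desired.

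There is no genuine obstacle here; the corollary is essentially a repackaging of Corollary \ref{onea} in a form convenient for the later analysis of nonsolvable $2$-Gagola pairs (where ${\rm SL}_2(2^n)$-type sections appear). The only thing worth double-checking is that the factor $2^n$ in $|{\rm SL}_2(2^n)|$ is correctly identified and discarded from the $3$-part, which is immediate.
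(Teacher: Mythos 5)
Your proposal is correct and is essentially identical to the paper's own argument: both use the factorization $|{\rm SL}_2(2^n)| = 2^n(2^n-1)(2^n+1)$, discard the $2$-power factor, and quote Corollary \ref{onea} to evaluate $\bigl((2^n-1)(2^n+1)\bigr)_3 = 3^{a+1}$. The paper merely states this more tersely; no further comment is needed.
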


\begin{proof}
We know that $|{\rm SL}_2 (2^n)| = (2^n - 1)2^n (2^n + 1)$.  
This now follows immediately from Corollary \ref{onea}.
\end{proof}

\begin{lemma}\label{three}
Suppose that $V$ is the natural module for $S = {\rm SL}_2 (2^n)$ over $F = {\rm GF} (2^n)$, and let $\phi$ be the character afforded by $V$.  If $P$ is a Sylow $3$-subgroup of $S$, then $\phi_P = \lambda + \lambda^{-1}$ where $\lambda$ is a faithful character of $P$ (over the algebraic closure of $F$).
\end{lemma}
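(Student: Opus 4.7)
The plan is to combine the coprime-order semisimple decomposition of $V \otimes_F \bar F$ as a $\bar F P$-module with the fact that the natural module of $\text{SL}_2(2^n)$ in characteristic $2$ is faithful.

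First I would record that $P$ is cyclic. We have $|S| = (2^n-1)2^n(2^n+1)$, and by Corollary \ref{two}, $|P|_3 = 3^{a+1}$ where $n_3 = 3^a$. Since $\gcd(2^n-1, 2^n+1)$ divides $2$, all of the $3$-part of $|S|$ is concentrated in exactly one of the two factors $2^n - 1$ or $2^n + 1$. Whichever one it is, $P$ embeds into a cyclic maximal torus (the split torus of order $2^n - 1$ or the non-split torus of order $2^n + 1$), so $P$ is cyclic.

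Next I would decompose $\phi_P$. Because $|P|$ is odd and $F$ has characteristic $2$, Maschke's theorem applies to the $FP$-module $V$, and after extending scalars to the algebraic closure $\bar F$ (which contains all odd-order roots of unity), the abelian group $P$ acts diagonalizably on $V \otimes_F \bar F$. Since $\dim V = 2$, we obtain a decomposition
\[
V \otimes_F \bar F = L_1 \oplus L_2
\]
as $\bar F P$-modules, where $P$ acts on $L_i$ by a linear character $\lambda_i : P \to \bar F^{\times}$. Because $P \le S = \text{SL}_2(2^n)$, every element of $P$ has determinant $1$ in its action on $V$, hence $\lambda_1 \lambda_2 = 1_P$. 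Setting $\lambda = \lambda_1$, we get $\phi_P = \lambda + \lambda^{-1}$.

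Finally I would verify faithfulness of $\lambda$. Suppose $g \in \ker \lambda$. Then $\lambda^{-1}(g) = 1$ as well, so $g$ acts trivially on each of $L_1$ and $L_2$, hence on $V \otimes_F \bar F$, and therefore on $V$. Since $F$ has characteristic $2$, we have $Z(\text{SL}_2(2^n)) = 1$, and the natural representation of $\text{SL}_2(2^n)$ on $V$ is faithful. Therefore $g = 1$, and $\lambda$ is faithful. There is no real obstacle here beyond being careful with the extension of scalars; the content is essentially the fact that a cyclic subgroup of $\text{SL}_2(\bar F)$ of order prime to the characteristic is conjugate to a diagonal subgroup, combined with the triviality of the center in even characteristic.
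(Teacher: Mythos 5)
Your proof is correct and follows essentially the same route as the paper: both diagonalize the cyclic Sylow $3$-subgroup on $V$ over an extension of $F$ and use $\det = 1$ to get $\phi_P = \lambda + \lambda^{-1}$. The only cosmetic difference is that the paper fixes a primitive root of unity $\eta$ with $\lambda(x)=\eta$ and reads off faithfulness from the order of $\eta$, while you deduce faithfulness from $\ker\lambda = \ker\lambda \cap \ker\lambda^{-1}$ acting trivially on the (tautologically faithful) natural module, which is if anything a cleaner justification.
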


\begin{proof}
First, we note that $P$ is cyclic, so $P = \langle x \rangle$ for some element $x$.  Let ${\mathcal X}$ be the representation afforded by $V$, so its trace is $\phi$.  We know by Brauer-Nesbitt that $\phi$ is absolutely irreducible.  Let $b$ be the integer so $3^b = |S|_3$.  Take $E$ to be an extension of $F$ that contains a $3^b$th root of unity $\eta$.  Since $\phi$ is absolutely irreducible, we may work with $V^E$ and ${\mathcal X}$.  Conjugating if necessary, we may assume that ${\mathcal X}^E (x) = \left[ \begin{array} {ll}
                       \eta & 0 \\
                       0 & \eta^{-1} \\
                       \end{array}\right] .$
Define $\lambda$ to the character of $P$ over $E$ defined by $\lambda (x) = \eta$.  Since $\eta$ and $P$ have the same order, we see that $\lambda$ is faithful.  It is not difficult to see that $\phi_P = \lambda+\lambda^{-1}$.
\end{proof}

The next result contains one of the main pieces of our argument.  In particular, we get a restriction on the dimensions of irreducible modules of ${\rm SL}_2 (2^n)$ over $Z_2$.

\begin{lemma} \label{four}
If $V$ is an irreducible module for ${\rm SL}_2 (2^n)$ over $Z_2$ and if ${\rm dim}_{Z_2} (V) < 4n$, then ${\rm dim}_{Z_2} (V)$ is either $1$, $2n$, or $8n/3$, and $8n/3$ occurs only when $3$ divides $n$.  Furthermore, if ${\rm dim}_{Z_2} (V) = 8n/3$ and $P$ is a Sylow $3$-subgroup of ${\rm SL}_2 (2^n)$, then $C_V (P) = 0$.
\end{lemma}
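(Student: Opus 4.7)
The plan is to apply Steinberg's tensor product theorem to classify the absolutely irreducible modules of $S = {\rm SL}_2(2^n)$, to descend to $Z_2 = {\rm GF}(2)$ via Galois orbits, and to verify $C_V(P) = 0$ in the $8n/3$ case by an eigenvalue calculation on $P$. Let $V_0$ denote the natural $2$-dimensional module and let $F$ denote the Frobenius twist. By Steinberg, the absolutely irreducible $\overline{Z_2} S$-modules are indexed by subsets $T \subseteq \{0, 1, \ldots, n - 1\}$ via $L(T) := \bigotimes_{i \in T} V_0^{(F^i)}$; each has dimension $2^{|T|}$ and is realized over ${\rm GF}(2^n)$. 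The cyclic Galois group $\langle F \rangle$ of order $n$ acts on this label set by $T \mapsto T + 1 \pmod{n}$, so the irreducible $Z_2 S$-modules correspond to cyclic-shift orbits $\mathcal{O}$ of subsets, with $\dim_{Z_2} V = |\mathcal{O}| \cdot 2^{|T|}$.

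Next I would enumerate orbits whose associated dimension is strictly less than $4n$. Writing $|\mathcal{O}| = n/h$ where $h$ is the order of the stabilizer of $T$, the inequality becomes $2^{|T|} < 4h$. Because the stabilizer acts freely on $\{0, 1, \ldots, n-1\}$ by translation and $T$ is a union of its orbits, $h$ divides $|T|$; in particular $h \le |T|$. Hence for $|T| \ge 4$ we have $2^{|T|} \ge 4|T| \ge 4h$, and no such orbit satisfies the strict inequality. The remaining cases are: $|T| = 0$ gives the trivial module (dimension $1$); $|T| = 1$ forces $h = 1$ (dimension $2n$); $|T| = 2$ forces $h = 2$, requiring $2 \mid n$ (dimension $2n$); and $|T| = 3$ forces $h = 3$, requiring $3 \mid n$ (dimension $8n/3$). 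This produces exactly the three options $1$, $2n$, and $8n/3$, with $8n/3$ occurring only when $3 \mid n$.

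For the final assertion, suppose $\dim V = 8n/3$. The associated orbit is that of $T_0 = \{0, n/3, 2n/3\}$, so $V \otimes_{Z_2} \overline{Z_2}$ is the sum of the Galois conjugates of $L(T_0)$, and it suffices to show each $L(T_0)^{(F^j)}$ has no $P$-fixed vector. Set $k = n/3$. By Corollary \ref{two}, $P = \langle x \rangle$ is cyclic of order $m = 3^{a+1}$ where $n_3 = 3^a$, and by Lemma \ref{three} we may write $V_0|_P = \lambda + \lambda^{-1}$ for a faithful character $\lambda$ of $P$. Since the Frobenius twist squares eigenvalues, $V_0^{(F^i)}|_P$ is afforded by $\lambda^{2^i} + \lambda^{-2^i}$, and hence the eigenvalues of $x$ on $L(T_0)|_P$ are $\lambda(x)^{\varepsilon_0 + \varepsilon_1 \, 2^k + \varepsilon_2 \, 2^{2k}}$ for the eight sign choices $(\varepsilon_0, \varepsilon_1, \varepsilon_2) \in \{\pm 1\}^3$. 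Thus the claim reduces to
$$
\varepsilon_0 + \varepsilon_1 \, 2^k + \varepsilon_2 \, 2^{2k} \;\not\equiv\; 0 \pmod{3^{a+1}} \quad \text{for all } \varepsilon_i \in \{\pm 1\}.
$$
Applying Lemma \ref{one} to the factorization $k = 3^{a-1}(n/3^a)$ and expanding via the binomial theorem gives $2^k \equiv (-1)^{n/3^a} + c' \cdot 3^a \pmod{3^{a+1}}$ with $\gcd(c', 3) = 1$, and hence $2^{2k} \equiv 1 + c'' \cdot 3^a \pmod{3^{a+1}}$ with $\gcd(c'', 3) = 1$. Substituting, the exponent becomes $A + B \cdot 3^a$ modulo $3^{a+1}$ with $A \in \{\pm 1, \pm 3\}$; the $A = \pm 1$ cases fail modulo $3$, while the $A = \pm 3$ cases force the three summands into matched signs and yield an expression of the form $\pm 3(1 + u \cdot 3^a)$ with $u$ coprime to $3$, of $3$-adic valuation exactly $1 < a + 1$. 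The same argument applies to each conjugate $L(T_0)^{(F^j)}$ because $\lambda^{2^j}$ remains faithful, so $C_V(P) = 0$.

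The main obstacle is the final sign analysis, which requires patience to verify all eight sign combinations together with both parities of $n/3^a$; but the arithmetic is routine once Lemmas \ref{one} and \ref{three} have been set up.
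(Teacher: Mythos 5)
Your proposal is correct and follows essentially the same route as the paper: the paper's Brauer--Nesbitt description of the absolutely irreducible characters as products of Galois twists of the natural character is your Steinberg tensor product labelling, the $Z_2$-dimension count via Galois orbits with stabilizer size bounded by (in your version, dividing) $|T|$ is the same enumeration, and the $8n/3$ case is handled identically by restricting to $P$, writing the natural character as $\lambda+\lambda^{-1}$, and showing $\pm 1 \pm 2^{n/3} \pm 2^{2n/3} \not\equiv 0 \pmod{3^{a+1}}$ via Lemma \ref{one}. The only differences are cosmetic (divisibility of the stabilizer order versus the paper's inequality, and your uniform $3$-adic valuation argument in the matched-sign case versus the paper's separate mod $9$ computation when $a=1$), and your conclusion there is sound even though the coefficient $u$ need not be coprime to $3$ --- only $3\nmid(1+u\,3^a)$ is needed, which is automatic.
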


\begin{proof}
Let $F$ be the field with $2^n$ elements.  Let $C$ be the Galois group for $F/Z_2$, so $C$ is cyclic of order $n$.  Let $W$ be the natural module for $G = {\rm SL}_2 (2^n)$.  We know, viewed as a module for $G$ over $F$, that $W$ has dimension $2$.  (Viewed as a module over $Z_2$, it follows that $W$ has dimension $2n$.)  Let $\phi$ be the $F$-character of $G$ afforded by $W$.  By the Brauer-Nesbitt theorem \cite{Nesb}, we know that every absolutely irreducible character for $G$ has the form $\phi^S = \prod_{\sigma \in S} \phi^\sigma$ where $S$ is a subset of $C$.  Notice that this implies that $F$ will be the splitting field for $G$.

We now write $\psi$ for the character afforded by $V^F$.  By Theorem 9.21 of \cite{text}, we know that $\psi$ is a sum of Galois conjugates of an irreducible $F$-character whose multiplicity is $1$.  Fix the subset $S$ of $C$ so that $\phi^S$ is an irreducible $F$-constituent of $\psi$.  We see that $\psi$ will be the sum of the Galois conjugates of $\phi^S$.

Notice that if $S$ is empty, then $\psi = \psi^S = 1$, and we conclude that the dimension of $V$ is $1$.  Thus, we may assume that $S$ is not empty.  Consider an element $c \in C$.  Then $(\phi^S)^c = \phi^{Sc}$. Hence, $(\phi^S)^c = \phi^{Sc}$ if and only if $S = Sc$, and so, the number of Galois conjugates of $\phi^S$ is equal to the number of sets of the form $Sc$.  Let $T = \{ c \in C \mid S = Sc \}$.  By the Fundamental Counting Principle (see Theorem 1.4 of \cite{isa}), the number of sets of the form $Sc$ equals $|C:T|$.  Fix an element $s \in S$.  If $t \in T$, then $st \in S$.  This implies that $t \in s^{-1} S$, and so, $T \subseteq s^{-1}S$.  We conclude that $|T| \le |S|$.

We see that ${\rm dim}_{Z_2} (V) = {\rm dim}_F (V^F) = {\rm deg} (\psi) = |C:T| ({\rm deg} \phi)^{|S|} = n2^{|S|}/|T| \ge n2^{|S|}/|S|$.  If $|S| = 1$, then $|T| = 1$, and we have ${\rm dim}_{Z_2} (V) = n2^1/1 = 2n$.  If $|S| = 2$, then either $|T| = 1$ or $|T| = 2$.  If $|T| = 1$, then ${\rm dim}_{Z_2} (V) = n2^2/1 = 4n$, and if $|T| = 2$, then ${\rm dim}_{Z_2} (V) = n2^2/2 = 2n$.  If $|S| = 3$, then $|T|$ is either $1$, $2$, or $3$.  If $|T| \le 2$, then ${\rm dim}_{Z_2} (V) = n2^3/|T| \ge n8/2 = 4n$.  If $|T| = 3$, then ${\rm dim}_{Z_2} (V) = n2^3/3 = 8n/3$.  Notice that $|T|$ divides $|C| = n$, so $3$ divides $n$.  Finally, if $|S| \ge 4$, then ${\rm dim}_{Z_2} (V) \ge n 2^|S|/|S|$.  Consider the function $f (x) = 2^x/x$.  Observe that $f'(x) = 2^x(x {\rm ln} (2) - 1)/x^2$, and so, $f' (x) > 0$ when $x > 1/{\rm ln} (2) \cong 1.44$.  This implies that $f (x)$ is increasing when $x \ge 4$.  In particular, $2^{|S|}/|S| \ge 2^4/4 = 16/4 = 4$, and conclude that ${\rm dim}_{Z_2} (V) \ge 4n$.  This proves the first conclusion.

We now focus on the case where ${\rm dim}_{Z_2} (V) = 8n/3$.  Notice that we must have $|T| = |S| = 3$.  Let $P$ be a Sylow $3$-subgroup of $G$.  We need to show that $C_V (P) = 0$. It suffices to show that $C_{V^F} (P) = 0$.  Since $\psi$ is the character afforded by $V^F$, this will follow if we can prove that $1_P$ is not a constituent of $\psi_P$.

Notice that $\psi$ contains all the characters of the form $\phi^{Sc}$, so we may assume that $1 \in S$.  It is difficult to see that this implies that $S = T$.  Let $c$ be the Frobenius automorphism of $F$, so $c$ is a generator of $C$.  Write $n = 3m$ where $m$ is an integer.  It is not difficult to see that $S = \{ 1, c^m, c^{2m} \}$, and so, $\psi = \sum_{i=0}^{m-1} (\phi \phi^{c^m} \phi^{c^{2m}})^{c^i}$.

We now consider $\psi_P$.  We wish to show that the principal character of $P$ is not a constituent of $\psi_P$.  We know that if $x \in F$, then $x^c = x^2$.  It follows that $\phi^{c^i} = \phi^{2^i}$, and so, $(\phi \phi^{c^m} \phi^{c^{2m}})^{c^i} = (\phi \phi^{c^m} \phi^{c^{2m}})^{2^i}$.  Thus, if $1_P$ is not a constituent of $\phi_P {\phi_P}^{c^m} {\phi_P}^{c^{2m}}$, then $1_P$ will not be a constituent of $(\phi \phi^{c^m} \phi^{c^{2m}})^{c^i}$.  Thus, to show that $1_P$ is not a constituent of $\psi_P$, it suffices to show that $1_P$ is not a constituent of $\phi_P {\phi_P}^{c^m} {\phi_P}^{c^{2m}}$.

By Lemma \ref{three}, we know that $\phi_P = \lambda + \lambda^{-1}$ where $\lambda$ is a faithful character of $P$ over an extension of $F$.  It follows that
$$
\phi_P {\phi_P}^{c^m} {\phi_P}^{c^{2m}} = (\lambda + \lambda^{-1})(\lambda^{2^m} + \lambda^{-2^m})(\lambda^{2^{2m}} + \lambda^{-2^{2m}}).
$$
Thus, all of the irreducible constituents of the character $\phi_P {\phi_P}^{c^m} {\phi_P}^{c^{2m}}$ have the form $\lambda^{a_1 + a_2 2^m + a_3 2^{2m}}$ where $a_1, a_2, a_3 \in \{ \pm 1 \}$.

Let $n_3 = 3^a$, and note that this implies that $m_3 = 3^{a-1}$.  We now apply Corollary \ref{onea} to see that $a_1 +a_2 2^m + a_3 2^{2m} \equiv a_1 + a_2 (-1) + a_3 (-1)^2 ~({\rm mod}~3^a)$ when $m$ is odd, and $a_1 +a_2 2^m + a_3 2^{2m} \equiv a_1 + a_2  + a_3 ~({\rm mod}~3^a)$ when $m$ is even.  This gives possible values of $\pm 1, \pm 3 ({\rm mod} 3^a)$.  Since $\lambda$ has order $3^{a+1}$, it follows that $1_P$ will not be occur when $a > 1$.  Suppose that $a = 1$.  We see that either $m$ is congruent to either $1$ or $2$ modulo $3$.  If $m$ is congruent to $1$ modulo $3$, then $2^m \equiv 2 ~({\rm mod}~9)$ and $2^{2m} \equiv 4 ~({\rm mod}~9)$.  This yields $a_1 +a_2 2^m + a_3 2^{2m} \equiv a_1 + 2a_2 + 4a_3 ~({\rm mod} ~9)$, and we obtain the possible values $\pm 3, \pm 5, \pm 7$ modulo $9$.  If $m$ is congruent to $2$ modulo $3$, then $2^m \equiv 4 ~({\rm mod}~9)$ and $2^{2m} \equiv 2~({\rm mod}~9)$.  Again we obtain the values $\pm 3, \pm 5, \pm 7$ modulo $9$.  As before, we conclude that $1_P$ does not occur, and this proves the result.
\end{proof}

Finally, we come to the main result of this section.  This proves Theorem \ref{thm2} when $p = 2$ and $G$ is nonsolvable.

\begin{theorem}
Let $(G,N)$ be a $2$-Gagola pair where $G$ is not solvable.  Then $|G:N|_2 \ge |N|^2$.
\end{theorem}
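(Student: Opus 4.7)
The plan is to mimic the structure of the $p$ odd nonsolvable proof: pin down the quotient $G/{\bf O}_2(G)$, exploit the module-theoretic restrictions coming from Lemma \ref{four}, and combine them with the involution restriction from Lemma \ref{six}. Since $G/{\bf O}_2(G)$ acts faithfully and transitively on $N \setminus \{1\}$ with a Sylow $2$-subgroup as point stabilizer and $G$ is nonsolvable, an analogue of Theorem 5.6 of \cite{Gagola} in characteristic two (or, equivalently, the classification of nonsolvable transitive linear groups) forces $G/{\bf O}_2(G)$ to contain a normal subgroup isomorphic to ${\rm SL}_2(2^n) = {\rm PSL}_2(2^n)$, with $N$ its natural $2$-dimensional module. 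Thus $|N| = 2^{2n}$ and $|P/{\bf O}_2(G)|$ is divisible by $2^n$. By Lemma \ref{|N|p^2} I may assume $n \ge 2$, so the target $|P:N| \ge |N|^2 = 2^{4n}$ reduces to showing $|{\bf O}_2(G)/N| \ge 2^{3n}$.

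By Lemma \ref{seven} I may assume ${\bf O}_2(G)/N$ is elementary abelian, and so I view it as a ${\rm GF}(2)[{\rm SL}_2(2^n)]$-module. If some irreducible constituent has dimension at least $4n$, then $|{\bf O}_2(G)/N| \ge 2^{4n}$ and we are done; otherwise Lemma \ref{four} forces every irreducible constituent to have dimension $1$, $2n$, or $8n/3$ (the last case requiring $3 \mid n$ and satisfying $C_V(K)=0$ for a Sylow $3$-subgroup $K$). Aiming for a contradiction, I would suppose $\dim_{{\rm GF}(2)} {\bf O}_2(G)/N < 3n$ and work to rule out this restricted configuration.

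The remaining work is to use Lemma \ref{six} to force the missing dimension. Letting $U \le P$ be the preimage of a Sylow $2$-subgroup of $G/{\bf O}_2(G)$, Lemma \ref{six} says every $u \in U \setminus {\bf O}_2(G)$ satisfies $u^2 \in {\bf O}_2(G) \setminus N$. A direct computation with the squaring map $u \mapsto u^2 N$ (which factors through $U/{\bf O}_2(G)$ modulo the commutator subgroup $[U,{\bf O}_2(G)]$) produces a copy of the natural $\mathrm{SL}_2(2^n)$-module inside ${\bf O}_2(G)/(N\cdot [U,{\bf O}_2(G)])$, supplementing the natural submodule $N$ itself. Combined with the action of $K$ and the $C_V(K)=0$ property in the $8n/3$-dimensional case, this should force $\dim_{{\rm GF}(2)} {\bf O}_2(G)/N \ge 3n$, the desired contradiction. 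The main obstacle is the bookkeeping in this last step: reconciling the subgroup $[U,{\bf O}_2(G)]/N$ with the submodule lattice of ${\bf O}_2(G)/N$ so that the dimensions coming from the squaring map and from Lemma \ref{four} combine cleanly to give the sharp bound $3n$.
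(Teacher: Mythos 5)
There is a genuine gap at the center of your argument. Your plan rests entirely on the intermediate inequality $\dim_{{\rm GF}(2)}({\bf O}_2(G)/N) \ge 3n$, and you never prove it: the proposed mechanism --- the squaring map $u \mapsto u^2N$ on a Sylow $2$-preimage $U$ producing ``a copy of the natural module'' inside ${\bf O}_2(G)/(N[U,{\bf O}_2(G)])$ --- is not even well-posed as a module statement, since squaring is equivariant only under $N_S(U)$ (a Borel subgroup), not under all of $S$ with $S/{\bf O}_2(G)\cong{\rm SL}_2(2^n)$, so it cannot by itself yield an $S$-submodule; the phrase ``should force $\dim \ge 3n$'' is precisely the missing proof. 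Worse, the inequality you aim for is not how the paper gets the bound, and it is not clearly true in the critical configuration: setting $M=[{\bf O}_2(G),S]$, Lemma \ref{four} leaves open the case $|M:N| = 2^{2n}$ with $S$ acting trivially on ${\bf O}_2(G)/M$, and nothing in your counting excludes $\dim({\bf O}_2(G)/N) < 3n$ there. The paper disposes of exactly this case by different means: when $M$ is abelian it uses the fully ramified character condition via Lemma \ref{abel}, which gives $|P:M| \ge |M:N|$ and hence $|P:N| \ge |N|^2$ from the layers above ${\bf O}_2(G)$ rather than from extra dimensions inside it; when $M$ is nonabelian it shows $S$ acts transitively on the nontrivial cosets of $M/N$, invokes Dickson's classification to obtain a dihedral normalizer of an odd Sylow subgroup of $S'/M$, and applies Lemma \ref{five} to manufacture an involution outside ${\bf O}_2(G)/N$, contradicting Lemma \ref{six}. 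Your use of Lemma \ref{six} (squares of elements outside ${\bf O}_2(G)$ avoid $N$) is much weaker and reaches neither conclusion.

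A second omission: constituents of dimension $1$. Your dimension analysis permits trivial constituents, but you have no analogue of the paper's step ruling out subgroups $N < A < {\bf O}_2(G)$ with $[A,S]\le N$, which requires the Camina-pair equality $|C_G(a)| = |C_{G/N}(aN)|$, knowledge of the Schur multiplier of ${\rm SL}_2(q)$, and again Lemma \ref{six}. Without that control on the submodule structure of ${\bf O}_2(G)/N$, the $C_V(K)=0$ observation in the $8n/3$ case does no work for you. In short, your setup (the identification of $G/{\bf O}_2(G)$ from Gagola, Lemma \ref{seven}, Lemma \ref{four}) matches the paper, but the decisive arguments --- the near-central subgroup elimination, the abelian case via Lemma \ref{abel}, and the involution contradiction via Lemma \ref{five} and Lemma \ref{six} --- are absent, and the counting scheme you propose in their place is both unproved and likely insufficient.
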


\begin{proof} by way of contradiction, we assume that a counterexample $G$ exists.  We see the result holds if ${\bf O}_2 (G)/N$ is not elementary abelian by Lemma \ref{seven}.  Thus, ${\bf O}_2 (G)/N$ is elementary abelian.  By Lemma \ref{six}, we know that every involution in $G/N$ lies in ${\bf O}_2 (G)/N$.  By Theorem 5.5 \cite{Gagola}, we know that there a $2$-power $q$ so that $|N| = q^2$ and a subgroup $S$ of $G$ so that ${\bf O}_2 (G) \le S \le G$ where $S$ is normal in $G$, $|G:S|$ is a power of $2$, $S/{\bf O}_2 (G) \cong {\rm SL}_2 (q)$, $N$ is the natural module for $S/{\bf O}_2 (G) \cong {\rm SL}_2 (q)$ and $G/{\bf O}_2 (G) \le {\rm Aut} (S/{\bf O}_2 (G))$.

We now prove a series of claims.

Claim 1: $Z ({\bf O}_2 (G)) = N$.  Let $Z = Z ({\bf O})_2 (G)$. We know that $N \le Z$ by \cite{Gagola}.  Suppose $N < Z$, and pick $z \in Z \setminus N$.  It follows that ${\bf O}_2 (G) \le C_G (z)$.  Let $D(z) = \{ g \in G | [g,z] \in N \}$.  We know that $|D(z):C_G (z)| = |N| = q^2$ and that $|D (z):C_G (z)|$ divides $|G:{\bf O}_2 (G)|$.  We know that $|G:{\bf O}_2 (G)|$ divides $|{\rm Aut} ({\rm SL}_2 (q)|$.  Thus, $|G:{\bf O}_2 (G)|_2$ divides $q {\rm log}_2 (q)_2 < q^2$.  We now have a contradiction.  Thus, $Z = N$, and the claim is proved.

Since we know that $N < {\bf O}_2 (G)$, Claim 1 implies that ${\bf O}_2 (G)$ is nonabelian.  We now show that often we may assume that $q > 4$.

Claim 2: If $|{\bf O}_2 (G):N| > q^2$ and $q = 4$, then the theorem is proved.  We know from \cite{Gagola} that $|G:N|_2$ is a square.  We see that $|S:{\bf O}_2 (G)|_2 |{\bf O}_2 (G):N|$ divides $|G:N|_2$.  This implies that $2q^2 4 = 2^7$ divides $|G:N|_2$.  Since $|G:N|_2$ is a square, $2^8$ divides $|G:N|_2$.  We conclude that $|G:N|_2 \ge 2^8 = 4^4 = q^4 = |N|^2$.

We now consider subgroups $A$ with $N \le A \le {\bf O}_2 (G)$ and $[A,S] \le N$.

Claim 3: If there exists a subgroup $A$ with $N \le A \le {\bf O}_2 (G)$ and $[A,S] \le N$, then $A$ is abelian, and in particular, $A < {\bf O}_2 (G)$.  Observe that $N$ is irreducible under the action of $S$.  Thus, either $A' = 1$ or $A' = N$.  Suppose $A' = N$, and thus, $N < A$.  Then it follows that $A$ is nonabelian.  Let $p$ be an odd prime divisor of $|S|$, and write $P$ for a Sylow $p$-subgroup of $S$.  We see that $[A,P] \le [A,S] \le N$, so $P$ centralizes $A/N$.  Hence, $A = C_A (P) N = C_A (P) A' \le C_A (P) \Phi (A) \le A$.  Thus, $A= C_A (P) \Phi (A)$, and this implies $A = C_A (P)$.  We conclude that $P$ centralizes $A$ which is a contradiction since $P$ act Frobeniusly on $1 < N \le A$.  Therefore, we must have $A' = 1$, and the claim is proved.

Claim 4: There does not exist a subgroup $A$ with $N < A < {\bf O}_2 (G)$ and $[A,S] \le N$.  By way of contradiction, assume such a subgroup $A$ exits.  Suppose $a \in A \setminus N$. Observe that $[a,S] \le N$, so $S/N \le C_{S/N} (aN)$.  Since $(G,N)$ is a Gagola pair, we know that $|C_G (a)| = |C_{G/N} (aN)|$.  Because $|G:S|$ is a power of $2$, this implies that $C_G (a)$ contains a full Sylow $p$-subgroup for every odd prime $p$ that divides $|G|$.  Let $p$ be an odd prime divisor of $|G|$, and let $P$ be a Sylow $p$-subgroup of $C_G (a)$.  Observe that $P \le S$, so $P$ is a Sylow $p$-subgroup of $C_S (a)$.  It follows that $C_S (a)$ contains a full Sylow subgroup for every odd prime.  Since $S/{\bf O}_2 (G) \cong {\rm SL}_2 (q)$ has no proper subgroups with this property, we conclude that $S = {\bf O}_2 (G) C_S (a)$.

Observe that $[{\bf O}_2 (G), a] \le N$.  Since $N$ is central in ${\bf O}_2 (G)$, this implies that $[{\bf O}_2 (G),a]$ is normal in ${\bf O}_2 (G)$.  Obviously, $C_S (a)$ normalizes $[{\bf O}_2 (G), a]$, so $[{\bf O}_2 (G), a]$ is normal in $S$.  Since $a \not\in N = Z ({\bf O}_2 (G))$, we see that $[{\bf O}_2 (G), a] > 1$.  Since $N$ is irreducible under the action of $S$, we conclude that $[{\bf O}_2 (G), a] = N$.  Hence, the map ${\bf O}_2 (G) \rightarrow N$ defined by $x \mapsto [x,a]$ is onto.  Since $N$ is central in ${\bf O}_2 (G)$, this map is a homomorphism and its kernel is $C_{{\bf O}_2 (G)} (a)$.  By the isomorphism theorems, we conclude that $|{\bf O}_2 (G): C_{{\bf O}_2 (G)} (a)| = |N| = q^2$.

We note that $C_{{\bf O}_2 (G)} (a)$ is normal in $C_S (a)$.  Notice that $N \le C_{{\bf O}_2 (G)} (a)$, and we have ${\bf O}_2 (G)/N$ is elementary abelian, so $C_{{\bf O}_2 (G)} (a)$ is normal in ${\bf O}_2 (G)$.  It follows that $C_{{\bf O}_2 (G)} (a)$ is normal in $S = {\bf O}_2 (G) C_S (a)$.

We can view $C_{{\bf O}_2 (G)} (a)/N$ as a module for $S/{\bf O}_2 (G)$.  If this module has any nonprincipal constituents, then we may apply Lemma \ref{four} to see that $|C_{{\bf O}_2 (G)} (a)/N| \ge q^2$.  This implies that $|{\bf O}_2 (G):N| = |{\bf O}_2 (G): C_{{\bf O}_2 (G)} (a)| |C_{{\bf O}_2 (G)} (a):N| \ge q^2 q^2 = q^4 = |N|^2$, and the theorem is proved.  Thus, we may assume that $[C_{{\bf O}_2 (G)} (a), S] \le N$.

We have $C_S (a)/ C_{{\bf O}_2 (G)} (a) \cong S/{\bf O}_2 (G) \cong {\rm SL}_2 (q)$, and we have just shown that $C_{{\bf O}_2 (G)} (a)/N \le Z (C_S (a)/N)$.  Let $C = C_S (a)$.  It is not difficult to see that $C' \cap {\bf O}_2 (G) = C' \cap C_{{\bf O}_2 (G)} (a)$.  If $N < C' \cap C_{{\bf O}_2 (G)} (a)$, then ${\rm SL}_2 (q)$ must have a nontrivial Schur multiplier, and so, $q = 4$.  Let $p$ be an odd prime divisor of $|S|$, and let $P$ be a Sylow $p$-subgroup of $S$.  Since $P$ centralizes $A/N$, and $P$ does not centralize ${\bf O}_2 (G)/N$, we conclude that $P$ does not centralize ${\bf O}_2 (G)/A$.  Thus, $S$ does centralize ${\bf O}_2 (G)/A$, so we may apply Lemma \ref{four} to see that $|{\bf O}_2 (G):A| \ge q^2$, and hence, $|{\bf O}_2 (G):N| > q^2$.  With $q = 4$, we have seen in Claim 2 that the Theorem is proved.  Thus, we may assume that $C' \cap C_{{\bf O}_2 (G)} (a) = N$.

Now, we have that $C'/N \cong S/{\bf O}_2 (G)$.  Obviously, $C'/N$ contains an involution.  But any such involution will not lie in ${\bf O}_2 (G)/N$, and this contradicts our earlier observation.  This contradiction proves the claim.

Claim 5: $[{\bf O}_2 (G),S]/N$ is irreducible under the action of $S$.  Suppose $N \le M < [{\bf O}_2 (G),S]$ so that $[{\bf O}_2 (G),S]/M$ is chief for $G$.  By Lemma \ref{four}, we have $|[{\bf O}_2 (G),S]:M| \ge q^2$.  If $S$ acts nontrivially on $M/N$, then $|M:N| \ge q^2$ by Lemma \ref{four}, and $|{\bf O}_2 (G):N| \ge q^4$ as desired.  Thus, we may assume that $[M,S] \le N$.  By Claim 4, this implies that $M = N$.  This proves the claim.

We now set $M = [{\bf O}_2 (G),S]$.

Claim 6: $S' \cap {\bf O}_2 (G) = M$.  Observe that ${\bf O}_2 (G)/M \le Z (S/M)$.  If $S' \cap {\bf O}_2 (G) > M$, then ${\rm SL}_2 (q)$ has a nontrivial Schur multiplier, so $q = 4$. We have $|{\bf O}_2 (G):N| > q^2$, so we are done in this case. This proves the claim.

Claim 7: $|M:N| = q^2$.  If $|M:N| \ge q^4$, then we have the result.  Thus, we may assume that $|M:N| < q^4$.  By Lemma \ref{four}, we know that if $|M:N| > q^2$, then $|M:N| = 2^{8n/3}$ where $q = 2^n$ and $3$ divides $n$.  We also know that if $P$ is a Sylow $3$-subgroup of $S'$, then $C_{M/N} (P) = 1$.  Let $D/M = N_{S'/M} (PM/M)$.  From Dickson's classification of the subgroups of ${\rm SL}_2 (q)$, we see that $D/M$ is a dihedral group.  Thus, we may apply Lemma \ref{five} to see that $D/N$ contains an involution not in $M/N$, but this yields an involution in $G/N$ that is outside ${\bf O}_2 (G)/N$ which is a contradiction.  This implies that $|M:N| = q^2$.

Claim 8: Final contradiction.  If $M$ is abelian, then the result holds by Lemma \ref{abel}.  Thus, $M$ is nonabelian.  This implies that $M' = N$.  Since $M/M'$ is a chief factor, we obtain $N = Z (M)$.  We know the exponent of $M$ must be $4$, so there exists $x \in M \setminus N$ so that $x^2 \ne 1$.  Notice that every element in $xN$ must have order $4$, since they are all conjugate to $x$.  We know that $S$ acts transitively on $N \setminus \{ 1 \}$.  This implies that the $S$-orbit of $x^2$ has size $q^2 - 1$.  It follows that the $S$-orbit of $xN$ must size at least $q^2 - 1$.  Since $|M/N \setminus \{ N \}| = q^2 - 1$, we conclude that $S$ acts transitively on $M/N$.  In particular, $C{S'/M}{xN} = C_{S'/M} (x^2) = 1$.  If $p$ is an odd prime divisor of $|S|$ and $P$ is a Sylow $p$-subgroup of $S'$, then $C_{M/N}(P) = 1$.  Let $D/M = N_{S'/M} (PM/M)$.  From Dickson's classification of the subgroups of ${\rm SL}_2 (q)$, we see that $D/M$ is a dihedral group.  Thus, we may apply Lemma \ref{five} to see that $D/N$ contains an involution not in $M/N$, but this yields an involution in $G/N$ that is outside ${\bf O}_2 (G)/N$ which is a contradiction.  This the final contradiction and the theorem is proved.
\end{proof}

\section{$p=2$ and $G$ solvable: part I}

We now start to handle the case where $p = 2$ and $G$ is solvable.  Recall that a non-abelian $2$-group $H$ is a Suzuki $2$-group if $H$ has more than one involution and ${\rm Aut} (H)$ has a solvable subgroup that acts transitively on the involutions of $H$.  The Suzuki $2$-groups of type A are denoted by $A (n,\Theta)$ where $n$ is a positive integer and $\Theta$ is a nontrivial odd-order automorphism of the field of order $2^n$.  The order of $A (n,\Theta)$ is $2^{2n}$.  We will give a more complete description of these groups in the next section.  At this time, we have not been able to dispense with the solvable hypothesis.

\begin{theorem} \label{2pairclass}
If $(G,N)$ is a $2$-Gagola pair where $G$ is solvable,
then either:
\begin{enumerate}
\item $|N|^2 \le |G:N|_2$ or
\item
    \begin{enumerate}
    \item $G$ is not $2$-closed.
    \item Writing $|N| = 2^n$ for an integer $n$, then $n$ is divisible by both $2$ and some odd prime.
    \item $G = M N_G (K)$ with $M \cap N_G (K) = 1$.
    \item ${\bf O}_ 2 (G) = M C$ where $M \cap C = 1$,
        $M = [{\bf O}_2 (G),K]$ and $C = C_{{\bf O}_2 (G)} (K)$ where $K$ is a subgroup of order $q$ where if possible $q$ is a Zsigmondy prime divisor of $|N| - 1$ and otherwise $q = 3$ when $|N| = 2^6$.
    \item $M$ is a Suzuki $2$-group of type A with $M' = Z(M) = N$.
    \item $C_G (M) = N$, so $N_G (K)$ is isomorphic to a subgroup of ${\rm Aut} (M)$.
    \item If $H$ is Hall $2$-complement of $G$, then $MH$ is a Frobenius group.
    \item If $x \in N_G (K)$, then $C_G (x) = C_M (x) C_{N_G (K)} (x)$ and $C_M (x) \le N$.  In particular, if $x \in C$, then $C_M (x) = N$.
    \end{enumerate}
\end{enumerate}
\end{theorem}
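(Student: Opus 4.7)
The plan is to assume conclusion (1) fails, so $|G:N|_2 < |N|^2$, and derive the structural assertions (a)--(h). Write $|N| = 2^n$. Two immediate reductions apply: Lemma \ref{seven} forces ${\bf O}_2(G)/N$ to be elementary abelian, and Lemma \ref{|N|p^2} forces $n \ge 3$. To establish (a), I would rule out $G$ being $2$-closed: if $G$ were $2$-closed with Sylow $2$-subgroup $P$, a Camina-pair argument applied to the $2$-section $(P,N)$ combined with $P/N$ elementary abelian would invoke Lemma \ref{abel pair} (after descending the Camina-pair structure to $P$ using the coprime action of a Hall $2$-complement) to force $|P:N| \ge |N|^2$, a contradiction. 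The case of $G$ Frobenius with kernel $N$ is excluded because it forces $e=1$.

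Since (a) holds, Lemma \ref{aff} applies (the $\mathrm{SL}_2(3)$ alternative is excluded because $p = 2$), yielding $G/{\bf O}_2(G) \le \Gamma(N)$ with $n$ even. Let $q$ be a Zsigmondy prime divisor of $2^n - 1$ when one exists, and otherwise (so $n = 6$) set $q = 3$. Since the image of $G/{\bf O}_2(G)$ in $\Gamma(N)/\Gamma_o(N)$ must be nontrivial (else $G$ is $2$-closed) and since $q$ does not divide $2^m - 1$ for the relevant $m < n$, $q$ must divide $|\Gamma(N):\Gamma_o(N)| = n$, so $n$ has an odd prime divisor. This gives (b). Applying Lemma \ref{prime pow} together with the Sylow structure of $\Gamma(N)$, I locate a subgroup $K$ of order $q$ inside a Hall $2$-complement of $G$.

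Set $M = [{\bf O}_2(G), K]$ and $C = C_{{\bf O}_2(G)}(K)$. By Lemma \ref{comm}, $N < M$. Coprime action of $K$ on ${\bf O}_2(G)$ yields ${\bf O}_2(G) = M \cdot C$; since $M = [M, K]$ forces $C_{M/N}(K) = 1$ on the abelian quotient and $C_N(K) = 1$ comes from the Frobenius action on $N$, we obtain $M \cap C = 1$, proving (d). The Frattini argument yields $G = {\bf O}_2(G) N_G(K) = M N_G(K)$, and the Camina/Frobenius-complement setup from Proposition 3.2 of \cite{ChMc} forces $M \cap N_G(K) = 1$, giving (c). For (g), $MH$ is Frobenius because $H$ acts fixed-point-freely on both $N$ and $M/N$ (via the $K$-action extended to all of $H$). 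Item (h) reduces to a direct centralizer calculation in the split decomposition $G = M \cdot N_G(K)$, using $M \cap N_G(K) = 1$ and the faithful action of $N_G(K)$ on $M/N$.

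The heart of the proof, and the principal obstacle, is establishing (e): $M$ is a Suzuki $2$-group of type $A$ with $M' = Z(M) = N$. Since $M/N$ is elementary abelian, $M' \le N$; if $M' = 1$ then $M$ is abelian, and $K$-irreducibility (via Lemma \ref{prime pow}) forces $|M:N| \ge |N|$, so Lemma \ref{abel} gives $|G:N|_2 \ge |N|^2$, a contradiction. Hence $M' = N$. Then $N \le Z(M)$ by \cite{Gagola}, and $Z(M) = N$ follows because $C_{M/N}(K) = 1$ prevents a proper $K$-invariant extension of $N$ inside $Z(M)$. The $K$-equivariant nondegenerate commutator pairing $M/N \times M/N \to N$ with $K$ acting irreducibly on each factor forces $|M/N| = |N|$ via a dimension count, so $|M| = |N|^2$. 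Because $G$ is transitive on $N \setminus \{1\}$ and $N$ equals the set of involutions of $M$ (using that $M$ has exponent $4$), a solvable subgroup of $\mathrm{Aut}(M)$ acts transitively on the involutions of $M$, identifying $M$ as a Suzuki $2$-group; the Galois-twisted form of the commutator pairing, coming from the $K$-module isomorphism between $N$ and a Frobenius twist of $M/N$, identifies it as type $A(n,\Theta)$. Finally (f) follows since $C_G(M) \cap M = Z(M) = N$, while the minimality of $N$ and the faithful action of $G$ on $M/Z(M)$ preclude $C_G(M)$ from exceeding $N$. The hardest step will be the type-$A$ identification, especially in the exceptional case $n = 6$ where $q = 3$ is not Zsigmondy, requiring extra care in the $K$-module analysis.
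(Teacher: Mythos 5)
Your skeleton matches the paper's in outline (rule out $2$-closed via Lemma \ref{abel pair}, apply Lemma \ref{aff}, take $K$ of order $q$, set $M=[{\bf O}_2(G),K]$, Frattini, then identify $M$ as a Suzuki $2$-group), but several of the steps you assert are exactly where the real work lies, and as written they do not go through. First, your derivation of (b) is wrong: a Zsigmondy prime divisor $q$ of $2^n-1$ divides $|\Gamma_o(N)|=2^n-1$, so there is no reason it divides $|\Gamma(N):\Gamma_o(N)|=n$ (take $n=4$, $q=5$). In the paper the odd prime divisor of $n$ only appears at the end, as the order of the nontrivial odd-order automorphism $\Theta$ once $M$ is identified with $A(n,\Theta)$ via Theorem VIII.7.9 of Huppert--Blackburn. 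Second, your route to $|M:N|=|N|$ does not work: you never establish that $K$ acts irreducibly on $M/N$ (it is part of a dichotomy -- if $M/N$ is reducible under the relevant group one lands in conclusion (1), since each nontrivial irreducible summand already has size $|N|$), and nondegeneracy of the commutator pairing $M/N\times M/N\to N$ by itself forces nothing about dimensions (extraspecial groups give a nondegenerate pairing into a group of order $2$). The paper instead takes $M/N$ to be a chief factor inside $[{\bf O}_2(G),K]$ and uses that the order of $2$ modulo $q$ is $n$, so Frobenius action forces every irreducible summand to have size $|N|$. Third, the exceptional case $|N|=2^6$, which you flag but do not resolve, is handled in the paper by replacing $K$ of order $3$ with a cyclic subgroup $K^*$ of order $9$ of the Hall $2$-complement (the order of $2$ modulo $9$ is $6$); with $K$ of order $3$ alone, irreducible modules over ${\rm GF}(2)$ have dimension $2$ and your bound $|M:N|\ge|N|$ fails.

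There are two further gaps in the ``easy'' items. For (g), the statement that $H$ acts fixed-point-freely on $M/N$ is not a formal consequence of the $K$-action: the paper needs Theorem \ref{solv Frob} to see that every prime-order subgroup $X\le H$ is normal in $H$, hence centralized by $K$, so that $C_M(X)N$ is normal in $G=MN_G(K)$, and then a Frattini-argument with $\Phi(M)=N$ forces $C_M(X)=1$; this is also what ultimately yields that all elements of $M\setminus N$ have order $4$ (via transitivity of $H$ on the nonidentity cosets of $M/N$, using $|H|=|M:N|-1$), which you assume when you declare the involutions of $M$ to be exactly $N\setminus\{1\}$. For (h), the inclusion $C_M(x)\le N$ is not a consequence of faithfulness of $N_G(K)$ on $M/N$; the paper proves it by combining the Camina equality $|C_G(x)|=|C_{G/N}(xN)|$ with the $H$-equivariant bijection $bN\mapsto b^2$ from $M/N$ onto $N$, giving $|C_M(x)|=|C_{M/N}(xN)|=|C_N(x)|$. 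Finally, your plan to identify $M$ as $A(n,\Theta)$ by constructing a Galois-twisted pairing amounts to reproving part of the classification of Suzuki $2$-groups; the paper simply verifies the Suzuki $2$-group hypotheses and quotes Theorem VIII.7.9 of Huppert--Blackburn, using $|M|=|Z(M)|^2$ to pin down type A.
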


\begin{proof}
%

By Lemma \ref{seven}, we have the first conclusion if ${\bf O}_2 (G)/N$ is not elementary abelian.  Thus, we may assume ${\bf O}_2 (G)/N$ is elementary abelian.If $G$ is $2$-closed, then we may use \cite{ChMc} to see that $({\bf O}_2 (G),N)$ is a Camina pair.  Since ${\bf O}_2 (G)/N$ is abelian, we may use Lemma \ref{abel pair} to see that $|{\bf O}_2 (G):N| \ge |N|^2$, and this gives the desired conclusion.  Thus, $G$ is not $2$-closed.

Let $H$ be a Hall $2$-complement of $G$.  If $|N| - 1$ has a Zsigmondy prime divisor $q$, then let $K = K^*$ be the subgroup of $H$ of order $q$.  Otherwise, $|N| = 2^6$ and we take $K$ to be the subgroup of $H$ of order $3$ and we take $K^*$ to be the subgroup of $H$ order $9$.  By Lemma \ref{aff}, we know that $G/{\bf O}_2 (G) \le \Gamma (N)$, and so, ${\bf O}_2 (G)K$ is normal in $G$. Applying Lemma \ref{comm}, we can find $M$ so that $N < M \le [{\bf O}_2 (G),K]$ with $M$ normal in $G$ and $M/N$ a chief factor of $G$.  Notice that $K$ will act Frobeniusly on $M/N$ since $M/N$ is abelian, so $K^*$ acts Frobeniusly on $M/N$.  Also, $M/N$ will be a direct sum of irreducible $K^*$ modules. It follows that $|N| \le |M:N|$.  If $M/N$ is not irreducible under the action of $K^*$, then $|N|^2 \le |M:N|$ and we are done.  Thus, $M/N$ is irreducible under the action of $K^*$ and so, $|M:N| = |N|$.  Also, if $M < [{\bf O}_2 (G),K]$, then $|N| \le |[{\bf O}_2 (G),K]:M|$ and $|N|^2 \le |P:N|$ where $P$ is a Sylow $2$-subgroup of $G$.  We have $M = [{\bf O}_2 (G),K]$.  We obtain ${\bf O}_2 (G) = M C_{{\bf O}_2 (G)} (K)$ and by the Frattini argument, $G = {\bf O}_2 (G) N_G (K) =  M N_G (K)$.  Let $C = C_{{\bf O}_2 (G)} (K)$, so ${\bf O}_2 (G) = MC$.

If $M$ is abelian, then we are done by Lemma \ref{abel}.  We may assume that $M$ is not abelian.  This implies that $M' = \Phi (M) = Z(M) = N$.

We have $C_G (M) \cap M = Z(M) = N$.  Let $B = C_G (M)$.  We will prove that $B = N$.  Note that $B$ is a normal subgroup of $G$ and $B \cap H = 1$, so $B$ is a $2$-group, and we obtain $B \le {\bf O}_2 (G)$.  We have $B = [B,K] C_{B} (K)$.  Now, $N = [N,K] \le [B,K] \le [{\bf O}_2 (G),K] \cap B \le M \cap B = N$.  Also, $C_{B} (K) \cap N \le C \cap N = 1$.  Now, $M$ centralizes $B$, so $N$ will centralize $C_{B} (K)$, and $N_G (K)$ normalizes both $B$ and $K$, so $N_G (K)$ normalizes $C_B (K)$.  It follows that $G = M N_G (K)$ normalizes $C_B (K)$.  Since $N$ is the unique minimal normal subgroup of $G$, this implies that $C_B (K) = 1$ and $B = [B,K] = N$.  In particular, $G/N$ is isomorphic to a subgroup of the automorphism group of $M$.


Let $X$ be any subgroup of $H$ of prime order.  By Lemma \ref{solv Frob}, we know that $X$ is normal in $H$, so $K$ centralizes $X$.  In particular, $N_G (K)$ normalizes $C_M (X)$.  Now, $M$ normalizes $C_M (X) N$, and thus, $C_M (X) N$ is normal in $G$.  If $M = C_M (X) N = C_M (X) \Phi (M)$, then $M = C_M (X)$ which is a contradiction since $X$ acts Frobeniusly on $N$.  Hence, $C_M (X) \le N$, and since $C_N (X) = 1$, we have $C_M (X) = 1$.  In particular, $H$ acts Frobeniusly on $M$.  This implies that $C \cap M = 1$.  Notice that $N_G (K) \cap {\bf O}_2 (G) = C$, so this implies $N_G (K) \cap M = 1$.

Since $|M:N| = |N| = |H| + 1$, we see that $H$ acts transitively on $M/N$.  Since $M$ is not abelian, there exists $m \in M \setminus N$ of order $4$.  It is easy to see that every element in the coset $mN$ will have order $4$ since $N = Z (M)$ and $N$ is elementary abelian.  Because $H$ acts transitively on $M/N$, this implies that every element in $M \setminus N$ has order $4$.  It follows that $N$ contains all the involutions in $M$.  Since $H$ acts transitively on $N \setminus \{ 1 \}$, $H$ acts transitively on the involutions of $M$.  We deduce that that $M$ is a Suzuki $2$-group.  By Theorem VIII.7.9 of \cite{HBII}, $M$ is isomorphic to $A (m,\Theta)$ as defined in Section VIII.6 of \cite{HBII} where $|N| = 2^n$ and $\Theta$ is an automorphism of the field of order $2^n$.  By Theorem VIII.6.9 of \cite{HBII}, we see that $\Theta$ must have odd order not equal to $1$.  On the other hand, by Lemma \ref{aff}, we know that $2$ divides $n$.

Because $N$ is central in $M$ and elementary abelian, the map $bN \mapsto b^2$ is a well-defined function from $M/N$ to $N$. Notice that if $h \in H$, then $bN^h = b^hN$ and $(b^2)^h = (b^h)^2$, so the action of $H$ commutes with this function.  Since $H$ is acting transitively, it follows that the function must be a bijection.

Suppose $x \in N_G (K)$.  Obviously, we have $C_M (x) C_{N_G (K)} (x) \le C_G (x)$.  Consider an element $g \in C_G (x)$.  We can write $g = mc$ where $m \in M$ and $c \in N_G (K)$.  We know that $1 = [g,x] = [mc,x] = [m,x]^c[c,x]$, and this implies that $[m,x]^c = [c,x]^{-1}$.  We know that $[m,x]^c \in M$ and $[c,x]^{-1} \in N_G (K)$.  This implies that $[m,x]^c = [c,x]^{-1}$ lie in $N_G (K) \cap M = 1$, and so $[m,x] = [c,x] = 1$, and thus, $m \in C_M (x)$ and $c \in C_{N_G (K)} (x)$.  We conclude that $C_G (x) = C_M (x) C_{N_G (K)} (x)$.

A similar argument shows that
$$
C_{G/N} (xN) = C_{M/N} (xN) C_{N_G(K) N/N} (xN).
$$
Making the observation that $C_{N_G (K) N/N} (xN) \cong C_{N_G (K)} (x)$, we have
$$
|C_{G/N} (xN)| = |C_{M/N} (xN)| |C_{N_G (K)} (x)|.
$$
Recalling that $|C_G (x)| = |C_{G/N} (xN)|$, we  conclude that $|C_{M/N} (xN)| = |C_M (x)|$.  Notice that the action of $x$ must commute with the map $bN \mapsto b^2$ from $M/N$ to $N$, and so, we must have $|C_{M/N} (xN)| = |C_N (x)|$.  Since this implies that $|C_N (x)| = |C_M (x)|$, we must have $C_M (x) \le N$.  If $x \in C$, then $C_M (x) = N$.
\end{proof}

\section{Automorphisms of Suzuki groups}

In order to finish the proof of Theorem \ref{thm1} we need to understand conclusion 2 of Theorem \ref{2pairclass}, and to do this we compute the automorphism group of a Suzuki $2$-group of type A and how it acts on the group.

There are two papers in Russian which we found in translation that address the automorphism groups of Suzuki $2$-groups of type A.  The first, \cite{autos}, computes the Sylow $2$-subgroup of the quotient of the
automorphism group by its centralizer of the center.  In particular,
it show that the Sylow $2$-subgroup is cyclic, and it uses that to
show that the automorphism group is solvable.

The second one,
\cite{p-alg}, finds three subgroups of the automorphism group.  We
will discuss the three subgroups later.  They then prove results for
an analog of the Suzuki $2$-groups for odd primes.  They mention
that they consider odd primes so that they do not need the ``subtle
considerations needed in the case of $p = 2$.''


We now recall the structure of a Suzuki $2$-group of type A. Since
$M$ is isomorphic to $A (n,\Theta)$, we can write the elements of $M$
as $(a,b)$ where $a,b \in F$ and $F = {\rm GF} (2^n)$. The
multiplication is defined by $(a,c) \cdot (b,d) = (a + b, c + d + b
\Theta (a))$.  (Recall that $\Theta$ is a nontrivial automorphism of
$F$ having odd order.)  The elements of $N$ are $(0,b)$ as $b$ runs
through $F$. Observe that $(a,b) = (a,0)(0,b) \in (a,0) N$. Since
$N$ is central ${\bf O}_2 (G)$, we have $C_{{\bf O}_2 (G)} ((a,0)) =
C_{{\bf O}_2 (G)} ((a,b))$.  Also, we see that $(a,b)N = (a,0)N$, so
$C_{G/N} ((a,0)N) = C_{G/N} ((a,b)N)$.

Let $M$ be a Suzuki $2$-group of Type A and use the notation of the
previous paragraph.  In \cite{p-alg}, they define three subgroups of
${\rm Aut} (M)$.  They are:

\begin{enumerate}
\item $A_1 = \{ \phi = \phi_\psi \mid (a,b)^\phi = (a, \psi (a) + b
\}$ where $\psi$ runs over the linear transformations of $F$
regarded as a vector space over $Z_2$.

\item $A_2 = \{ \phi = \phi_x \mid (a,b)^\phi = (xa,x \Theta (x) b) \}$
where $x$ runs over the nonzero elements of $F$.

\item $A_3 = \{ \phi = \phi_\tau \mid (a,b)^\phi = (a^\tau,b^\tau)
\}$ where $\tau$ runs over the Galois automorphisms of $F$ with
respect to $Z_p$.
\end{enumerate}

It is not difficult to show that $|A_1| = 2^{n^2}$.  (Each of the
$n$ basis elements of $F$ can be sent to any element of $F$.)  Also,
we see that $|A_2| = 2^n - 1$ and $|A_3| = n$.  Suppose $\psi$ is a
linear transformation of $F$, $x \in F \setminus \{ 0 \}$, and
$\tau$ is a Galois automorphism of $F$.  Consider $(a,b) \in M$.
Then
$$
(\phi_\psi)^{\phi_x} (a,b) = (a,x^{-1} \Theta (x^{-1}) \psi (xa) +
b),
$$
and observe that the map $a \mapsto x^{-1} \Theta (x^{-1}) \psi
(xa)$ is a linear transformation of $F$.  Thus,
$(\phi_\psi)^{\phi_x} \in A_1$.  Also, we have
$$
(\phi_\psi)^{\phi_\tau} (a,b) = (a,(\psi (a^\tau))^{\tau^{-1}} + b),
$$
and since the map $a \mapsto (\psi (a^\tau))^{\tau^{-1}}$ is a
linear transformation of $F$, we have $(\phi_\psi)^{\phi_\tau} \in
A_1$.  Finally,
$$
(\phi_x)^{\phi_\tau} (a,b) = (x^{\tau^{-1}}a,x^{\tau^{-1}} \Theta
(x^{\tau^{-1}}) b),
$$
and so, $(\phi_x)^{\phi_\tau} = \phi_{x^{\tau^{-1}}} \in A_2$.  We
conclude that $A_2$ and $A_3$ both normalize $A_1$ and $A_3$
normalizes $A_2$.  We note that $A_2 A_3$ is isomorphic to $\Gamma (N)$ where $A_2$ corresponds to $\Gamma_o (N)$.


We now work to prove that ${\rm Aut} (M) = A_1 A_2 A_3$.  We begin with a general lemma about the automorphisms of $M$.

\begin{lemma} \label{auts}
Let $M$ be the Suzuki $2$-group $A (n,\Theta)$ with notation as above.  If $\phi$ is an automorphism of $M$, then there exist bijective linear transformations $f$ and $h$ of $F$ that satisfies $h (a \Theta (a)) = f(a) \Theta(f(a))$ for all $a \in F$ and a map $g$ from $F$ to $F$ that satisfies $g (0) = 0$ so that $(a,b)^\phi = (f(a), g(a) + h (b))$.  If $h (\Theta (a_1) a_2) = \Theta(f(a_1)) f(a_2)$ for all $a_1, a_2 \in F$, then $g$ is a linear transformation of $F$.
\end{lemma}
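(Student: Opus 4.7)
The plan is to exploit the fact that $N=Z(M)=M'$ is characteristic in $M$, so every automorphism $\phi$ of $M$ preserves $N$ and induces automorphisms of both $N$ and $M/N$. Both quotients are elementary abelian $2$-groups isomorphic to $(F,+)$, and under these identifications the induced maps on $M/N$ and $N$ are bijective $\mathbb{Z}_2$-linear transformations; call them $f$ and $h$ respectively. Since $(a,b)\in (a,0)N$, we can define $g\colon F\to F$ by the requirement $(a,0)^\phi=(f(a),g(a))$, and of course $g(0)=0$ because $\phi$ fixes the identity $(0,0)$. Writing $(a,b)=(a,0)(0,b)$ and using the multiplication rule (noting that the first coordinate of $(0,b)^\phi$ is $0$, so no $\Theta$-term appears) yields $(a,b)^\phi=(f(a),g(a)+h(b))$.

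Next I would force $\phi$ to be a homomorphism. Evaluating $\phi((a,c)(b,d))$ on the one hand and $\phi((a,c))\phi((b,d))$ on the other and matching first coordinates recovers the fact that $f$ is additive (already known). Matching second coordinates, and cancelling the $h$-values that appear on both sides via the additivity of $h$, produces the single functional equation
\[
g(a+b)+h(b\,\Theta(a))=g(a)+g(b)+f(b)\,\Theta(f(a))
\]
valid for all $a,b\in F$. Specializing to $b=a$ (and using $g(0)=0$ together with characteristic $2$) collapses this to $h(a\Theta(a))=f(a)\Theta(f(a))$, which is exactly the compatibility relation in the statement.

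For the final clause, I would simply substitute the assumed identity $h(\Theta(a_1)a_2)=\Theta(f(a_1))f(a_2)$ into the displayed equation (with $a_1=a$, $a_2=b$); the two mixed terms cancel and what is left is $g(a+b)=g(a)+g(b)$, so $g$ is $\mathbb{Z}_2$-linear as claimed.

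I expect no serious obstacle here: the proof is essentially a matter of decomposing $(a,b)=(a,0)(0,b)$, using that $N$ is central and characteristic, and reading off the constraint imposed by the $\Theta$-cocycle in the multiplication. The only point requiring a little care is book-keeping in characteristic $2$ and remembering that $\Theta$ is only assumed to commute with itself, so the relation $h(a\Theta(a))=f(a)\Theta(f(a))$ on squares is strictly weaker than the bilinear version $h(\Theta(a_1)a_2)=\Theta(f(a_1))f(a_2)$, which is why the additional hypothesis is needed to conclude linearity of $g$.
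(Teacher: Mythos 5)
Your proposal is correct and follows essentially the same route as the paper: decompose $(a,b)=(a,0)(0,b)$, use that $N=Z(M)$ is characteristic to get the bijective linear maps $f$ and $h$, and read off the constraint from the homomorphism property, with the relation $h(a\Theta(a))=f(a)\Theta(f(a))$ obtained by specializing your functional equation at $b=a$ (the paper gets it equivalently by squaring $(a,0)$). The final deduction that $g$ is additive under the bilinear hypothesis matches the paper's argument exactly.
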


\begin{proof}
Since $\phi$ is an automorphism, we have $(a,b)^\phi = (a,0)^\phi (0,b)^\phi$ for all $(a,b) \in M$.  We define $f$ and $g$ by $(a,0)^\phi = (f(a),g(a))$.  Since $\phi$ determines an automorphism of $M/N$, it follows that $f$ is a bijective linear transformation of $F$.   It is not difficult to see that $g$ is a map from $F$ to $F$.  Since $\phi$ maps $(0,0)$ to $(0,0)$, we conclude that $g (0) = 0$.  Notice that $N = Z(M)$ is characteristic, so $\phi$ must map $N$ to $N$.  It follows that we define $h$ by $(0,b)^\phi = (0,h(b))$.  Since the restriction of $\phi$ to $N$ will be an automorphism of $N$, we conclude that $h$ is a bijective linear transformation of $F$. We have $(a,0)^\phi (0,b)^\phi = (f(a),g(a))(0,h(b)) = (f(a), g(a)+h(b))$.

Observe that $(a,0)^2 = (0,\Theta (a) a)$.  Applying $\phi$ to this equation yields $((a,0)^\phi)^2 = (f(a),g(a))^2 = (0,\Theta (f(a)) f(a))$ and $(0,\Theta (a) a)^\phi = (0, h (\Theta (a) a)$.  We obtain $(0,\Theta (f(a)) f(a)) = (0,h(\Theta (a) a))$, and we have the desired equality $h (\Theta (a) a) = \Theta (f (a)) f(a)$.

We know that $(a_1,0)(a_2,0) = (a_1 + a_2,  \Theta (a_1) a_2)$.  Applying $\phi$, we obtain $(a_1,0)^\phi (a_2,0)^\phi = (f(a_1),g(a_1)) (f(a_2),g(a_2))) = (f(a_1)+f(a_2), g(a_1) +  g(a_2) + \Theta (f (a_1)) f(a_2))$ and $(a_1 + a_2,  \Theta (a_1) a_2)^\phi = (f(a_1 + a_2), h(\Theta (a_1) a_2) + g (a_1 + a_2))$.  Since $\phi$ is an automorphism, we know that $g(a_1) +  g(a_2) + \Theta (f (a_1)) f(a_2)) = h(\Theta (a_1) a_2) + g (a_1 + a_2)$.  If $\Theta (f (a_1)) f(a_2) = h(\Theta (a_1) a_2)$, then $g (a_1 + a_2) = g (a_1) + g (a_2)$, and we conclude that $g$ is a linear transformation.
\end{proof}

This next fact is Theorem VIII.6.9(b) of \cite{HBII}.

\begin{lemma}\label{bijection}
Let $F$ be a field of order $2^n$ and let $\Theta$ be an automorphism of $F$ of order $l$ where $l$ is an odd integer.  Then the map $a \mapsto a \Theta (a)$ is a bijection of $F$.
\end{lemma}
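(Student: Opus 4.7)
The plan is to reduce bijectivity of the map $\mu \colon a \mapsto a\Theta(a)$ to injectivity—automatic because $F$ is finite—and then exploit that on the multiplicative group $F^*$ the map $\mu$ is a group homomorphism (since $F^*$ is abelian and $\Theta$ is a field automorphism): $\mu(ab) = ab\,\Theta(ab) = (a\Theta(a))(b\Theta(b))$. Because $\mu(0) = 0$, it will suffice to verify that the kernel of $\mu|_{F^*}$ is trivial, i.e.\ the only $c \in F^*$ with $\Theta(c) = c^{-1}$ is $c = 1$.

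To that end I would take $c \in F^*$ satisfying $c\Theta(c) = 1$ and apply $\Theta$ once more to obtain $\Theta^2(c) = \Theta(c^{-1}) = \Theta(c)^{-1} = c$. Thus $c$ lies in the fixed field of $\Theta^2$. The crucial input is that $\Theta$ has odd order $l$: since $\gcd(2, l) = 1$, the cyclic subgroup $\langle \Theta^2\rangle$ equals $\langle \Theta\rangle$, and so the fixed field of $\Theta^2$ coincides with the fixed field of $\Theta$. Hence $\Theta(c) = c$, which together with $\Theta(c) = c^{-1}$ forces $c^2 = 1$. Since $|F^*| = 2^n - 1$ is odd, this yields $c = 1$, and injectivity of $\mu$ follows.

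There is essentially no hard step here; the whole argument hinges on the observation that the odd order of $\Theta$ forbids any nontrivial element from being inverted by $\Theta$. Note that this also handles the degenerate case $l = 1$, in which $\mu$ is the Frobenius squaring map and is bijective by general characteristic-two reasons. No properties of $\Theta$ beyond its multiplicativity and the parity of its order are needed.
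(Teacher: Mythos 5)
Your argument is correct. The reduction to injectivity on the finite set $F$, the observation that $a \mapsto a\Theta(a)$ restricts to an endomorphism of the abelian group $F^*$, and the kernel computation all check out: if $c\Theta(c)=1$ then $\Theta^2(c)=c$, and since $l$ is odd we have $\Theta \in \langle \Theta^2\rangle$, so $c$ is fixed by $\Theta$, whence $c=c^{-1}$ and $c=1$ because $|F^*|=2^n-1$ is odd (or, just as well, because $x^2=1$ forces $x=1$ in characteristic $2$). The only point worth noting in comparison with the paper is that the paper does not prove this statement at all: it is simply quoted as Theorem VIII.6.9(b) of Huppert--Blackburn, \emph{Finite Groups II}. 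Your proof therefore adds something the paper leaves to the literature, namely a self-contained, purely elementary verification that uses nothing beyond the multiplicativity of $\Theta$ and the parity of its order; the cited source proves the same fact in the broader context of its analysis of Suzuki $2$-groups. Either route is perfectly acceptable here, and your version has the advantage of making the lemma independent of the reference.
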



We now consider the subgroup $A_1$ in ${\rm Aut} (M)$.

\begin{lemma} \label{cent N}
Let $M$ be the Suzuki $2$-group $A (n,\Theta)$ with notation as above.  Then $C_{{\rm Aut} (M)} (N) = A_1$.
\end{lemma}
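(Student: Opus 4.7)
The plan is to prove the two inclusions separately, with the forward inclusion $A_1 \subseteq C_{\mathrm{Aut}(M)}(N)$ being a trivial direct computation and the reverse inclusion driven by Lemma~\ref{auts} together with the bijectivity from Lemma~\ref{bijection}.

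First I would dispose of the easy containment: for any $\phi_\psi \in A_1$ and any element $(0,b) \in N$, the definition gives $(0,b)^{\phi_\psi} = (0, \psi(0) + b) = (0,b)$ since $\psi$ is a linear map, so $\phi_\psi$ fixes $N$ pointwise. Hence $A_1 \leq C_{\mathrm{Aut}(M)}(N)$.

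For the reverse containment, take an arbitrary $\phi \in C_{\mathrm{Aut}(M)}(N)$ and apply Lemma~\ref{auts} to write
\[
(a,b)^\phi = (f(a),\, g(a) + h(b)),
\]
where $f, h$ are bijective linear transformations of $F$, $g(0)=0$, and $h(a\Theta(a)) = f(a)\Theta(f(a))$ for all $a \in F$. Since $\phi$ centralizes $N$, we have $(0,b)^\phi = (0,b)$ for all $b$, which forces $h$ to be the identity on $F$. Substituting this into the compatibility relation yields $a\Theta(a) = f(a)\Theta(f(a))$ for all $a \in F$. Lemma~\ref{bijection} says the map $x \mapsto x\Theta(x)$ is a bijection of $F$, hence injective, so $f(a) = a$ for all $a$, i.e.\ $f$ is the identity as well.

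With $f = \mathrm{id}$ and $h = \mathrm{id}$, the hypothesis $h(\Theta(a_1)a_2) = \Theta(f(a_1)) f(a_2)$ of the second part of Lemma~\ref{auts} is automatically satisfied, so $g$ is a linear transformation of $F$. Therefore $(a,b)^\phi = (a, g(a) + b)$, which is exactly $\phi_g \in A_1$. This proves $C_{\mathrm{Aut}(M)}(N) \leq A_1$ and completes the equality. There is no real obstacle here beyond correctly invoking the structural lemmas; the key insight is that centralizing $N$ forces $h=\mathrm{id}$, which then cascades through the compatibility relation (via the bijectivity of $x\mapsto x\Theta(x)$) to force $f=\mathrm{id}$ and finally to force $g$ to be linear.
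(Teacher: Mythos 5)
Your proof is correct and follows essentially the same route as the paper: centralizing $N$ forces $h=\mathrm{id}$, the relation $h(a\Theta(a))=f(a)\Theta(f(a))$ together with the bijectivity of $a\mapsto a\Theta(a)$ from Lemma~\ref{bijection} forces $f=\mathrm{id}$, and then the second part of Lemma~\ref{auts} gives linearity of $g$, so $\phi=\phi_g\in A_1$. No differences worth noting.
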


\begin{proof}
It is obvious that $A_1 \le C_{{\rm Aut} (M)} (N)$.  Suppose that $\phi \in C_{{\rm Aut} (M)} (N)$.  We can write $(a,b)^\phi = (f(a),g(a)+ h(b))$ as in Lemma \ref{auts}.  Since $\phi$ centralizes $N$, we see that $h (b) = b$ for all $b \in F$.  We know that $f(a) \Theta (f(a)) = h (a \Theta (a)) = a \Theta (a)$ for all $a \in F$.  By Lemma \ref{bijection}, we obtain $f(a) = a$ for all $a \in F$.  We conclude that $(a,b)^\phi = (a,b + g(a))$.  Since $h (\Theta (a_1) a_2) = \Theta (a_1) a_2 = \Theta (f (a_1)) f(a_2)$, we may use Lemma \ref{auts} to conclude that $g$ is a linear transformation of $F$, and so, $\phi = \phi_g \in A_1$.
\end{proof}

This next corollary encodes the result of \cite{autos}.

\begin{corollary} \label{Sylow 2}
Let $M$ be the Suzuki $2$-group $A (n,\Theta)$ with notation as above.  Let $\sigma$ generate the Sylow $2$-subgroup of the Galois group.  Then $\langle A_1, \phi_\sigma \rangle$ is a Sylow $2$-subgroup of ${\rm Aut} (M)$.
\end{corollary}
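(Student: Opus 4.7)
The plan is to combine Lemma~\ref{cent N} with the cited computation of \cite{autos} to identify the Sylow $2$-subgroup of ${\rm Aut}(M)$ as $A_1$ extended by a single Galois automorphism of $2$-power order. By Lemma~\ref{cent N}, $A_1 = C_{{\rm Aut}(M)}(N)$ is normal in ${\rm Aut}(M)$, and since $|A_1| = 2^{n^2}$, it is in fact a normal $2$-subgroup. Hence every Sylow $2$-subgroup of ${\rm Aut}(M)$ contains $A_1$, and the Sylow $2$-subgroups of ${\rm Aut}(M)$ correspond under the quotient map to the Sylow $2$-subgroups of ${\rm Aut}(M)/A_1$.

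Next I would verify that $\langle A_1, \phi_\sigma \rangle$ is a $2$-group of order $2^{n^2} \cdot n_2$. The discussion preceding the corollary already shows that $A_3$ normalizes $A_1$, so $\phi_\sigma$ normalizes $A_1$ and $\langle A_1, \phi_\sigma \rangle = A_1 \langle \phi_\sigma \rangle$. To compute the intersection, observe that $\phi_\sigma^k$ acts on $N$ via $(0,b) \mapsto (0, b^{\sigma^k})$; hence $\phi_\sigma^k \in C_{{\rm Aut}(M)}(N) = A_1$ if and only if $\sigma^k = 1$, i.e.\ $n_2 \mid k$. Since $\phi_\sigma$ has order $n_2$, this gives $A_1 \cap \langle \phi_\sigma \rangle = 1$ and $|\langle A_1, \phi_\sigma \rangle| = 2^{n^2} \cdot n_2$.

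Finally, I would invoke \cite{autos}, which computes the Sylow $2$-subgroup of ${\rm Aut}(M)/C_{{\rm Aut}(M)}(N) = {\rm Aut}(M)/A_1$ and shows it to be cyclic of order $n_2$. The previous paragraph establishes that $\phi_\sigma A_1$ is an element of order $n_2$ in ${\rm Aut}(M)/A_1$, so it generates this cyclic Sylow $2$-subgroup. Pulling back through the quotient map, $\langle A_1, \phi_\sigma \rangle$ is a Sylow $2$-subgroup of ${\rm Aut}(M)$, which is the desired conclusion.

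The only substantive obstacle is the reliance on \cite{autos} for the exact $2$-part $n_2$ of $|{\rm Aut}(M)/A_1|$; without it, one would need to bound $|{\rm Aut}(M)/A_1|_2$ directly. A natural route would be via Lemma~\ref{auts}: any automorphism of $M$ acts on $N$ by a linear bijection $h \in {\rm GL}(N)$ with $h(a\Theta(a)) = f(a)\Theta(f(a))$, where $f$ is the induced action on $M/N$, so that $h$ is conjugate to $f$ through the bijection $\Psi(a) = a\Theta(a)$ of Lemma~\ref{bijection}. One would then argue that any such $h$ of $2$-power order must arise from a power of $\sigma$ in the Galois group; but this analysis is delicate, and the reference to \cite{autos} circumvents it.
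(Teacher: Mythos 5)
Your argument is correct and is essentially the paper's own: the paper offers no proof beyond the remark that the corollary ``encodes the result of \cite{autos},'' i.e.\ it combines Lemma~\ref{cent N} (identifying $C_{{\rm Aut}(M)}(N)=A_1$ as a normal $2$-subgroup) with the cited computation that the Sylow $2$-subgroup of ${\rm Aut}(M)/A_1$ is cyclic of order $n_2$. You simply fill in the routine details (that $\phi_\sigma$ normalizes $A_1$, that $A_1\cap\langle\phi_\sigma\rangle=1$, and that $\phi_\sigma A_1$ has order $n_2$ in the quotient), which is exactly how the paper intends the corollary to be read.
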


We now prove the desired result.  Our intuition is that this has been proved before, but we cannot find any reference where it is explicitly computed.

\begin{theorem}
Let $M$ be the Suzuki $2$-group $A (n,\Theta)$ with notation as above.  Then ${\rm Aut} (M) = A_1 A_2 A_3$.
\end{theorem}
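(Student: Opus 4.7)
The plan is to prove the nontrivial inclusion ${\rm Aut} (M) \subseteq A_1 A_2 A_3$; the reverse inclusion was already established in the discussion preceding the theorem. Given $\phi \in {\rm Aut} (M)$, Lemma \ref{auts} supplies $\mathbb{Z}_2$-linear bijections $f, h : F \to F$ satisfying $h(a\Theta(a)) = f(a)\Theta(f(a))$ for all $a \in F$, together with a map $g$ with $g(0) = 0$, such that $(a,b)^\phi = (f(a), g(a) + h(b))$. My strategy is to multiply $\phi$ on the right by suitable elements of $A_2$ and $A_3$ so that the resulting $f$ becomes the identity; once $f = \mathrm{id}$, the constraint forces $h(a\Theta(a)) = a\Theta(a)$, so by bijectivity of $a \mapsto a\Theta(a)$ (Lemma \ref{bijection}) also $h = \mathrm{id}$, and the adjusted automorphism then centralizes $N$ and lies in $A_1$ by Lemma \ref{cent N}. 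Since both $A_2$ and $A_3$ normalize $A_1$ and $A_2 A_3$ is a subgroup, this recovers $\phi \in A_1 A_2 A_3$.

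First I would normalize $f(1) = 1$: put $x = f(1) \in F^*$ and replace $\phi$ by $\phi \cdot \phi_{x^{-1}}$, which lies in $\phi \cdot A_2$; a direct computation with $(a,b)^{\phi_y} = (ya, y\Theta(y)b)$ shows the new $f$ satisfies $f(1) = 1$, and so $h(1) = f(1)\Theta(f(1)) = 1$ as well.

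The heart of the argument is then to show that, under this normalization, $f$ is a Galois automorphism of $F$ over $\mathbb{Z}_2$. Every $\mathbb{Z}_2$-linear map $F \to F$ is given by a unique additive polynomial modulo $a^{2^n}-a$, so write $f(a) = \sum_{j=0}^{n-1} \delta_j a^{2^j}$ and $h(b) = \sum_{i=0}^{n-1} \gamma_i b^{2^i}$, and fix the integer $k$ with $\Theta(x) = x^{2^k}$. The constraint becomes the polynomial identity
\[
\sum_i \gamma_i\, a^{2^i + 2^{i+k}} \; = \; \Bigl(\sum_j \delta_j\, a^{2^j}\Bigr) \Bigl(\sum_{j'} \delta_{j'}^{2^k}\, a^{2^{j'+k}}\Bigr),
\]
with all exponents on $2$ taken modulo $n$. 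Comparing coefficients of the ``off-diagonal'' monomials $a^{2^c + 2^d}$ with $c \ne d$ and $\{c,d\} \ne \{i, i+k \bmod n\}$ for every $i$ yields the relations $\delta_c \delta_{d-k}^{2^k} + \delta_d \delta_{c-k}^{2^k} = 0$, while comparing coefficients of the ``collapsed'' monomials $a^{2^{j+1}}$ (arising from the squarings $a^{2^j}\cdot a^{2^j}$ in the expansion, together with the cyclic reduction $a^{2^n} = a$) produces additional identities among the $\delta_j$. A case analysis on the support $S = \{j : \delta_j \ne 0\}$ then forces $|S| = 1$, so $f(a) = \delta a^{2^{j_0}}$ for some $j_0$; since $f(1) = 1$, $\delta = 1$ and $f = \tau$ for the Galois automorphism $\tau(x) = x^{2^{j_0}}$. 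Replacing $\phi$ by $\phi \cdot \phi_\tau^{-1} \in \phi \cdot A_3$ achieves $f = \mathrm{id}$, and the proof concludes as outlined in the first paragraph.

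The main obstacle is the coefficient analysis establishing $|S| = 1$: in the generic case the off-diagonal relations give the result at once, but when small multiples of $k$ collapse modulo $n$ (for instance when $3k \equiv 0 \pmod n$, which already occurs for $n = 3$, $k = 1$) the off-diagonal pairs become scarce, and one must combine them with the collapsed-monomial identities to rule out larger supports.
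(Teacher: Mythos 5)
Your reduction strategy (use $A_2$ to normalize $f(1)=1$, show $f$ is a Galois automorphism, use $A_3$ to make $f$ the identity, deduce $h={\rm id}$ from Lemma \ref{bijection}, and finish with Lemma \ref{cent N}) is sound, and it is genuinely different from the paper's proof, which never looks at the linearized-polynomial identity: there one restricts automorphisms to $N$, identifies the kernel with $A_1$ via Lemma \ref{cent N}, and combines solvability of ${\rm Aut}(M)$ (Corollary \ref{Sylow 2}, i.e.\ the result of \cite{autos}) with Theorem VIII.3.5 of \cite{HBII} to bound the image by $(2^n-1)n$ and match it against $A_2A_3$. However, as written your argument has a genuine gap exactly at the point you flag as the main obstacle: the assertion that the coefficient relations force $|S|=1$ is never proved. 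The relations you display do not do the job in the form stated: for $c,d\in S$ the off-diagonal relation $\delta_c\delta_{d-k}^{2^k}+\delta_d\delta_{c-k}^{2^k}=0$ is vacuous, because the collapsed-monomial identities give $\delta_j\delta_{j-k}^{2^k}=0$ for all $j$, hence $\delta_{c-k}=\delta_{d-k}=0$; so one must apply the off-diagonal relations to shifted pairs such as $\{s,t+k\}$, and nothing in your sketch organizes this. Moreover the case you single out as troublesome ($n=3$, $k=1$) is in fact immediate from the collapsed identities alone (every nonzero residue mod $3$ is $\pm k$), while the hypothesis that is genuinely needed --- that $\Theta$ has odd order greater than $1$ --- never appears in your support analysis; without it the relations do not exclude $|S|=2$ when $n\mid 4k$.

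The gap is fillable, and the argument is short. The collapsed monomials $a^{2^{(j+1)\bmod n}}$ have binary weight one and never occur on the left, so $\delta_j\delta_{j-k}^{2^k}=0$ for all $j$; equivalently no two elements of $S$ differ by $k$ modulo $n$. Now take distinct $s,t\in S$ and compare coefficients of $a^{2^s+2^{(t+k)\bmod n}}$: the only right-hand contributions are $\delta_s\delta_t^{2^k}$ and $\delta_{t+k}\delta_{s-k}^{2^k}$, and the second vanishes since $t+k\notin S$; if $s\not\equiv t+2k \pmod n$ this monomial does not occur on the left, giving $\delta_s\delta_t^{2^k}=0$, a contradiction. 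Hence $s\equiv t+2k$, and by symmetry $t\equiv s+2k$, so $n\mid 4k$ and the order $n/\gcd(n,k)$ of $\Theta$ divides $4$, contradicting that it is odd and nontrivial; thus $|S|=1$ and, with $f(1)=1$, $f$ is a Galois automorphism. (One should also record the routine point that reduction modulo $a^{2^n}-a$ keeps all these exponents below $2^n$ and keeps weight-two and weight-one monomials distinct, so coefficient comparison is legitimate.) With this supplied, your proof is complete and is arguably more self-contained than the paper's, since it avoids importing the solvability of ${\rm Aut}(M)$ and the classification of transitive linear groups used there.
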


\begin{proof}
The map $\rho$ from ${\rm Aut} (M)$ to ${\rm Aut} (N)$ defined by restriction is a homomorphism whose kernel is $C_{{\rm Aut} (M)} (N)$.  We know ${\rm Aut} (M)$ is solvable, so $\rho ({\rm Aut} (M))$ is solvable.  Let $X$ be a Hall $2$-complement of $\rho ({\rm Aut} (M))$.  We know that $X$ acts transitively on the nonidentity elements of $N$.  By Theorem VIII.3.5 of \cite{HBII}, we see that $X$ is isomorphic to a subgroup of the affine group of $N$.  In particular, we have that $|X| \le (2^n - 1)n_{2'}$.  We saw in Corollary \ref{Sylow 2} that $|\rho ({\rm Aut} (M))|_2 = n_2$, and so, $|\rho ({\rm Aut} (M))| \le (2^n - 1)n$.  We know that $A_2 A_3 \cap A_1 = 1$ and $|A_2 A_3| = (2^n - 1)n$.  It follows that $\rho ({\rm Aut} (M)) \cong A_2 A_3$, and hence, ${\rm Aut} (M) = A_1 A_2 A_3$.
\end{proof}

Before we apply this to obtain results in our situation, we make an observation.  Suppose $\sigma$ is a Galois automorphism of $F$.  It is not difficult to see that $\phi_\sigma \in A_3$ will fix $(1,0) \in M$.  In particular, $\phi_\sigma$ will fix an element of $M$ that is outside of $N$.  If $(G,N)$ is a pair satisfying the conclusion of \ref{2pairclass}, then we see that $N_G (K)$ cannot contain any element that induces an automorphism which is conjugate to an element of $A_3$.  Hence, if we wish to show that no such group $G$ can exist, then it suffices to prove that $N_G (K)$ would have to contain an element that induces an automorphism that is conjugate to an element of $A_3$.

\section{$p=2$ and $G$ is solvable: part 2} \label{spec}

We now use the observation from the end of the previous section to show that the groups in Conclusion 2 of Theorem \ref{2pairclass} do not exist.  We begin by determining more information about the elements of $A_1$ that commute with elements of $A_2$.  Since the elements of $C$ commute with $K$ and the image of $K$ is conjugate to a subgroup of $A_2$, this can be viewed as describing the image of $C$ in ${\rm Aut} (M)$.

\begin{lemma} \label{cent A1}
Let $M$ be the Suzuki $2$-group $A (n,\Theta)$ with notation as above.  Let $x \in F^x$.  If $1 \ne \phi_\psi \in C_{A_1} (\phi_x)$, then there is a positive integer $j$ so that $x \Theta (x) = x^{2^j}$.
\end{lemma}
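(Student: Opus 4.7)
The plan is to translate the commutation relation between $\phi_\psi$ and $\phi_x$ into a functional equation for $\psi$, and then expand $\psi$ in the standard $F$-basis for the $Z_2$-linear endomorphisms of $F$ given by Frobenius powers. First, I would compute both $\phi_\psi \phi_x$ and $\phi_x \phi_\psi$ applied to a generic element $(a,b) \in M$. A direct calculation from the defining formulas gives
$$(a,b)^{\phi_\psi \phi_x} = (xa,\, x\Theta(x)\psi(a) + x\Theta(x)b), \qquad (a,b)^{\phi_x \phi_\psi} = (xa,\, \psi(xa) + x\Theta(x)b),$$
so $\phi_\psi \in C_{A_1}(\phi_x)$ if and only if $\psi(xa) = x\Theta(x)\psi(a)$ for every $a \in F$.

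Next, I would invoke the classical fact that every $Z_2$-linear map $F \to F$ is uniquely of the form $\psi(a) = \sum_{i=0}^{n-1} c_i a^{2^i}$ for some $c_i \in F$, since the Frobenius powers $a \mapsto a^{2^i}$ form an $F$-basis (under pointwise scalar multiplication on the codomain) for the $n$-dimensional $F$-space of additive endomorphisms of $F$.  Substituting this expansion into the functional equation $\psi(xa) = x\Theta(x)\psi(a)$ and comparing coefficients of $a^{2^i}$ yields
$$c_i x^{2^i} = c_i\, x \Theta(x) \quad \text{for each } i = 0, 1, \ldots, n-1,$$
so for each $i$ either $c_i = 0$ or $x^{2^i} = x\Theta(x)$.

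Finally, since $\phi_\psi \ne 1$ we have $\psi \ne 0$, so there exists some $j$ with $c_j \ne 0$, and then $x \Theta(x) = x^{2^j}$.  To ensure that $j$ can be taken positive: if $x = 1$, then $x\Theta(x) = 1 = 1^{2^j}$ holds trivially for every positive $j$, so we may freely select $j \ge 1$.  If $x \ne 1$, then $j = 0$ would force $x = x\Theta(x)$, that is, $\Theta(x) = 1 = \Theta(1)$, contradicting injectivity of the field automorphism $\Theta$; hence any $j$ arising from a nonzero coefficient $c_j$ is already $\ge 1$.  I do not anticipate a genuine obstacle in this argument; the one step meriting care is the appeal to the uniqueness of the Frobenius expansion of a $Z_2$-linear map, which should be stated as a brief remark or accompanied by a standard reference.
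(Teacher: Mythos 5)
Your proposal is correct, and it reaches the key identity $\psi(xa) = x\Theta(x)\psi(a)$ exactly as the paper does; the difference lies in how you exploit it. The paper iterates the relation to get $\psi(x^i a) = (x\Theta(x))^i\psi(a)$, then evaluates the minimal polynomial $f$ of $x$ over $Z_2$ to obtain $0 = f(x\Theta(x))\psi(a)$ for some $a$ with $\psi(a) \ne 0$, so that $x\Theta(x)$ must be one of the roots $x^{2^j}$ of $f$; this needs nothing beyond the fact that the roots of $f$ are the Frobenius conjugates of $x$. You instead write $\psi(a) = \sum_{i=0}^{n-1} c_i a^{2^i}$ and compare coefficients, concluding $c_i\bigl(x^{2^i} - x\Theta(x)\bigr) = 0$ for all $i$. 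That route is slightly more informative (it describes the full centralizer $C_{A_1}(\phi_x)$ as the span of the Frobenius terms with $x^{2^i} = x\Theta(x)$, not just the existence of one such $j$), but it does lean on the uniqueness of the linearized-polynomial representation of a $Z_2$-linear map, which you correctly flag as needing a remark or reference (it follows from Artin--Dedekind independence of the maps $a \mapsto a^{2^i}$ over $F$, or from the observation that a nonzero additive polynomial of degree at most $2^{n-1}$ cannot vanish on all $2^n$ elements of $F$). Your handling of the positivity of $j$ (treating $x = 1$ separately and ruling out $j = 0$ when $x \ne 1$ via injectivity of $\Theta$) is fine and in fact a bit more careful than the paper on that minor point.
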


\begin{proof}
Since $\phi_x$ and $\phi_\psi$ commute, we have that $\psi (a) = x^{-1} \Theta (x^{-1}) \psi (xa)$ for every element $a \in F$.  This implies that $\psi (xa) = x \Theta (x) \psi (a)$ for all $a \in F$.  Since $\phi_\psi \ne 1$, we know that $\psi \ne 0$.  Thus, there is an element $a \in F$ so that $\psi (a) \ne 0$.  Applying this with $x^2a$, we obtain $\psi (x^2a) = x \Theta (x) \psi (xa) = x^2 \Theta (x^2) \psi (a)$.  Inductively, we obtain $\psi (x^ia) = x^i \Theta (x^i) \psi (a)$ for every positive integer $i$.

Let $f (y)$ be the minimal polynomial in $Z_2 [y]$ for $x$.  We write $f (y) = \sum_{i=0}^r a_i y^i$ where $a_i \in Z_2$.  Notice that $f (y)$ is fixed by the Galois automorphisms of $F$. Thus, the roots of $f (y)$ have the form $x, x^2, x^4, \dots, x^{2^n}$.  We have
$$
0 = \psi (0) = \psi (f(x)a) = \psi (\sum_{i=1}^r a_i x^i a) = \sum_{i=1}^r a_i \psi (x^i a) $$
$$= \sum_{i=1}^r a_i (x\Theta (x))^i \psi (a) = f(x \Theta (x)) \psi (a).
$$
Since $\psi (a) \ne 0$, we conclude that $x \Theta (x)$ is a root of $f (y)$, and thus, $x \Theta (x) = x^{2^j}$ for some positive integer $j$.
%
\end{proof}

We refine the condition of the last lemma to a modular congruence.

\begin{corollary}\label{numcond}
Let $M$ be the Suzuki $2$-group $A (n,\Theta)$ with notation as above.  Let $x \in F^x$.  Suppose $\Theta (a) = a^{2^h}$ for some positive integer $h$.  If $2^h + 1 \not\equiv 2^j ~({\rm mod}~o(x))$ for all positive integers $j$, then $C_{A_1} (\phi_x) = 1$.
\end{corollary}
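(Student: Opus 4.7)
The plan is to derive this corollary directly from Lemma \ref{cent A1} by substituting the explicit form of $\Theta$ into the conclusion $x\Theta(x) = x^{2^j}$. The contrapositive is the most natural route: I would assume for contradiction that $C_{A_1}(\phi_x)$ contains a nontrivial element $\phi_\psi$, then apply Lemma \ref{cent A1} to produce a positive integer $j$ with $x\Theta(x) = x^{2^j}$.

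Using the hypothesis $\Theta(a) = a^{2^h}$ evaluated at $a = x$, the left-hand side becomes $x \cdot x^{2^h} = x^{2^h + 1}$. Thus $x^{2^h + 1} = x^{2^j}$ in $F^{\times}$, which translates to $x^{2^h + 1 - 2^j} = 1$ (interpreting exponents as integers, possibly negative, and recalling that $F^{\times}$ is cyclic). Equivalently, $o(x)$ divides $2^h + 1 - 2^j$, which is precisely the congruence $2^h + 1 \equiv 2^j \pmod{o(x)}$. This contradicts the standing hypothesis that no such $j$ exists, forcing $C_{A_1}(\phi_x) = 1$.

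The argument is essentially a one-line substitution, so there is no real obstacle: Lemma \ref{cent A1} does all the heavy lifting. The only minor care needed is the bookkeeping for the congruence, since $j$ could a priori be smaller or larger than $h$, but since $F^{\times}$ is a (multiplicatively written) cyclic group of order $2^n - 1$, any integer identity among powers of $x$ reduces cleanly to a congruence modulo $o(x)$, and the sign of $2^h + 1 - 2^j$ is immaterial.
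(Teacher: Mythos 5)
Your argument is correct and is essentially identical to the paper's proof: both assume $C_{A_1}(\phi_x)>1$, invoke Lemma \ref{cent A1} to get $x\Theta(x)=x^{2^j}$, substitute $\Theta(x)=x^{2^h}$ to obtain $x^{2^h+1}=x^{2^j}$, and read off the congruence $2^h+1\equiv 2^j \pmod{o(x)}$ to reach a contradiction. Nothing further is needed.
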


\begin{proof}
Suppose $C_{A_1} (\phi_x) > 1$.  By Lemma \ref{cent A1}, we know that $x \Theta (x) = x x^{2^h} = x^{2^h + 1} = x^{2^j}$ for some positive integer $j$.  It follows that $2^h + 1 \equiv 2^j ~({\rm mod}~o(x))$, a contradiction.
\end{proof}

We now come to our key observation.

\begin{lemma} \label{nontrivcent}
Let $M$ be the Suzuki $2$-group $A (n,\Theta)$ with notation as above.  Let $(G,N)$ be as in the conclusion of Theorem \ref{2pairclass}.  Let $\rho$ be the homomorphism from $N_G (K)$ to ${\rm Aut} (M)$.  If the element $g \in C_G (K)$ has odd order (if $n = 6$, then assume $g \in C_G (K)$ has order $7$), then $\rho (g)$ is conjugate to an element in $A_2$ and $C_{A_1} (\rho (g)) > 1$.
\end{lemma}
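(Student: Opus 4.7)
The plan is to prove the two conclusions separately: the conjugacy claim follows from a centralizer computation combined with a Hall/Sylow argument, while the non-triviality of $C_{A_1}(\rho(g))$ reduces via Corollary~\ref{numcond} to a number-theoretic congruence that must be extracted from the Suzuki structure.

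First I would observe that the restriction of $\rho$ to $N_G(K)$ is injective: its kernel is $N \cap N_G(K) = C_N(K)$, which is trivial by the Frobenius action of $K$ on $N$ built into Theorem~\ref{2pairclass}(g,h). Hence $\rho(g)$ has the same odd order as $g$ and centralizes $\rho(K)$ in ${\rm Aut}(M)$. Up to conjugation in ${\rm Aut}(M)$ I may place $\rho(K) = \langle \phi_y \rangle \le A_2$ for some $y \in F^{\times}$ of order $|K|$, because any order-$|K|$ subgroup of $\Gamma(N) \cong A_2 A_3$ that acts Frobeniusly on $N$ must lie in the multiplicative part $A_2$ (a semi-linear element with nontrivial Galois component $\tau$ has fixed points on $N$ as soon as the scalar lies in the image of $a \mapsto a^{1-\tau}$, a subgroup of $F^{\times}$). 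Using the semidirect decomposition ${\rm Aut}(M) = A_1 \rtimes (A_2 \rtimes A_3)$, a direct calculation shows that $\phi_\psi \phi_z \phi_\tau$ commutes with $\phi_y$ exactly when $y^\tau = y$ and $\phi_\psi \in C_{A_1}(\phi_y)$; so $C_{{\rm Aut}(M)}(\phi_y) = C_{A_1}(\phi_y) \cdot A_2 \cdot C_{A_3}(\phi_y)$, with $C_{A_3}(\phi_y)$ trivial when $|K|$ is Zsigmondy in $2^n-1$ and of order $3$ in the exceptional case $n=6$, $|K|=3$.

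To deduce the conjugacy claim, $\rho(g)$ has odd order and lies in $C_{{\rm Aut}(M)}(\phi_y)$. In the Zsigmondy case, $A_2$ is a Hall $2'$-subgroup of this centralizer, and Schur--Zassenhaus conjugates $\rho(g)$ into $A_2$. In the exceptional $n = 6$ case, the hypothesis that $g$ has order $7$, coprime to the Galois factor of order $3$, ensures that every Sylow $7$-subgroup of the centralizer is the Sylow $7$-subgroup of $A_2$, and Sylow conjugacy again places $\rho(g)$ in a conjugate of $A_2$.

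For the second conclusion, since $A_1$ is normal in ${\rm Aut}(M)$, conjugation preserves the cardinality of centralizers in $A_1$, so $|C_{A_1}(\rho(g))| = |C_{A_1}(\phi_w)|$ where $\phi_w \in A_2$ is the conjugate of $\rho(g)$ and $o(w) = o(g)$. By Corollary~\ref{numcond} the goal reduces to producing $j$ with $2^h + 1 \equiv 2^j \pmod{o(w)}$. This is the main obstacle; I would extract the congruence from the commutativity of $\langle g \rangle$ and $K$ together with the Suzuki compatibility, exploiting that inside $A_2$ the cyclic subgroup generated by $\rho(K)\cdot\rho(g)$ acts on $M/N$ and on $N$ through multiplication by $x$ and by $x^{2^h+1}$ for a common primitive element $x$, so the $\mathbb{F}_2$-reps of $\langle g \rangle$ on $M/N$ and $N$ share an irreducible constituent, forcing $w^{2^h+1}$ into the Frobenius orbit $\{w,w^2,w^4,\dots\}$. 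The delicate order-$7$ restriction in the $n=6$ exceptional case is precisely what is needed to keep this compatibility alive.
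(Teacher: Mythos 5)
Your handling of the first conclusion (conjugating $\rho(g)$ into $A_2$ via the centralizer of $\rho(K)$ and a Hall/Sylow argument) is essentially sound and close to the paper's own route. The genuine gap is in the second conclusion. You propose to obtain $C_{A_1}(\rho(g)) > 1$ by producing $j$ with $2^h+1 \equiv 2^j \pmod{o(w)}$, and you would extract this from the claim that the $\mathbb{F}_2$-representations of $\langle g\rangle$ on $M/N$ (multiplication by $w$) and on $N$ (multiplication by $w^{2^h+1}$) share an irreducible constituent. Nothing in the hypotheses forces such a shared constituent: commuting with $K$ places $\rho(g)$ in (a conjugate of) $A_2$, but it imposes no relation between these two module structures, and a common constituent is literally equivalent to the congruence you are trying to prove (it says $w^{2^h+1}$ lies in the Frobenius orbit of $w$), so the step is circular as well as unjustified. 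Worse, that congruence is exactly what fails in the situation where the lemma gets applied: in Corollary \ref{nonex} one takes $g \in C_G(K)$ of order $2^d-1$ with $d \mid n$ and $d \nmid k$, shows $2^h+1 \not\equiv 2^j \pmod{2^d-1}$ for every $j$, and the contradiction with the present lemma (via Lemma \ref{cent A1} and Corollary \ref{numcond}) is what kills case 2 of Theorem \ref{2pairclass}. If your congruence derivation were valid it would prove an arithmetically false statement for such $w$; so the argument cannot be repaired along these lines.

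The actual source of $C_{A_1}(\rho(g)) > 1$ is group-theoretic rather than arithmetic, and it uses two ingredients absent from your sketch. First, a Frattini argument: since the centralizer of $K$ in $\Gamma(N)$ is $\Gamma_o(N)$ (respectively, the order-$7$ subgroup is normal when $n = 6$), the subgroup $C\langle g\rangle$ is normal in $N_G(K)$, whence $N_G(K) = C\, N_{N_G(K)}(\langle g\rangle)$ and $N_{N_G(K)}(\langle g\rangle)$ has a nontrivial Sylow $2$-subgroup. Second, the structure of a Sylow $2$-subgroup of ${\rm Aut}(M)$, namely $A_1\langle \phi_\sigma\rangle$ from Corollary \ref{Sylow 2}: if $C_{A_1}(\phi_x) = 1$, then a Sylow $2$-subgroup of $N_{{\rm Aut}(M)}(\langle\rho(g)\rangle)$ is conjugate into $\langle\phi_\sigma\rangle$, so some element of $N_G(K)$ would induce a nontrivial power of $\phi_\sigma$, which fixes $(1,0) \in M\setminus N$; this contradicts the conclusion $C_M(x) \le N$ for $x \in N_G(K)$ in Theorem \ref{2pairclass}. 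That fixed-point obstruction coming from $A_3$, together with the Frattini-supplied $2$-element normalizing $\langle g\rangle$, is the idea your proposal is missing.
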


\begin{proof}
Recall that $G/{\bf O}_2 (G) \cong N_G (K)/C$ is isomorphic to a subgroup of the affine group of $N$.  If $n > 6$, then the centralizer of $K$ in the affine group is normal and cyclic of order $2^n - 1$, so $C \langle g \rangle$ is a characteristic subgroup of $C_{N_G (K)/C} (K)$.  (In particular, the centralizer of the image of $K$ in $\Gamma (N)$ is $\Gamma_o (N)$.)  If $n = 6$, it is not difficult to see that the subgroup of order $7$ is normal in the affine group.  This implies that $C \langle g \rangle$ is normal in $N_G (K)$.  We know that $\langle g \rangle$ is a Hall subgroup of $C \langle g \rangle$.  By the Frattini argument, we know that $N_G (K)  = C N_{N_G (K)} (\langle g \rangle )$.  In particular, $N_{N_G (K)} (\langle g \rangle)$ has a nontrivial Sylow $2$-subgroup.

Observe that $\rho (\langle K, g \rangle)$ has odd order, so it is conjugate to a subgroup of $A_2 A_3$.  Since $A_2A_3$ is isomorphic to the affine group of $N$, it follows that $C_{A_2A_3} (\rho (K))$ is conjugate to $A_2$, and we conclude that $\rho (g)$ is conjugate to some element of $A_2$.

Let $x \in F$ be the element so that $\rho (g)$ is conjugate to $\phi_x$.  Let $\sigma$ be the generator for the Sylow $2$-subgroup of the Galois group of $F$.  Let $P = A_1 \langle \phi_\sigma \rangle$, and we know that $P$ is a Sylow $2$-subgroup of ${\rm Aut} (M)$.  Since $\sigma$ will normalize $\langle x \rangle$, it follows that $\phi_\sigma$ will normalize $\langle \phi_x \rangle$.  Hence, $\phi_\sigma$ will normalize $A_1 \langle \phi_x \rangle$.  Let $A = P \langle \phi_x \rangle$.  We conclude that $N_A (\langle \phi_x \rangle) = C_{A_1} (\phi_x) \langle \phi_x, \phi_\sigma \rangle$.

Suppose $C_{A_1} (\rho (g)) = 1$, then $C_{A_1} (\phi_x) = 1$.  This implies that $N_A (\langle \phi_x \rangle) = \langle \phi_x, \phi_\sigma \rangle$.  It follows that a Sylow $2$-subgroup of $N_{{\rm Aut} (M)} (\langle \rho (g) \rangle)$ is conjugate to a subgroup of $\langle \phi_\sigma \rangle$.  We next observe that $N_{N_G (K)} (\langle g \rangle)$ is isomorphic to a subgroup of $N_{{\rm Aut} (M)} (\langle \rho (g) \rangle)$.  Since $N_{N_G (K)} (\langle g \rangle)$ contains a nontrivial Sylow $2$-subgroup, it contains an element that $\rho$ maps to a nontrivial power of $\phi_\sigma$.  However, $\phi_\sigma$ centralizes elements of $M$ outside of $N$, and thus, $N_G (K)$ contains an element that centralizes elements of $M$ outside of $N$.  This is a contradiction of Theorem \ref{2pairclass}.
\end{proof}

\begin{corollary} \label{nonex}
Let $M$ be the Suzuki $2$-group $A (n,\Theta)$ with notation as above.  Let $(G,N)$ be as in the conclusion of Theorem \ref{2pairclass}.  Suppose $\Theta$ has order $l$, $n = kl$, and $\Theta (a) = a^{2^h}$ for all $a \in F$.  
If $d$ divides $n$ and does not divide $k$, then $C_G (K)$ does not contain a subgroup of order $2^d - 1$.
\end{corollary}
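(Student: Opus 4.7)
My plan is to assume for contradiction that $U \le C_G(K)$ is a subgroup of order $2^d - 1$ with $d \mid n$ and $d \nmid k$, produce an element $g \in U$ whose image under the restriction homomorphism $\rho \colon N_G(K) \to \mathrm{Aut}(M)$ has order $2^d - 1$ (or order $7$ in the $n = 6$ case), then derive a contradictory congruence on $h$ via Lemma \ref{nontrivcent} and Corollary \ref{numcond}. I will use throughout that since $\Theta$ is the $h$-th power of the Frobenius of $F$, the hypothesis that $\Theta$ has order $l = n/k$ forces $\gcd(n, h) = k$.

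The heart of the argument is the number-theoretic step. If $g \in U$ has order $2^d - 1$, then by Lemma \ref{nontrivcent} and Corollary \ref{numcond} applied to a representative $\phi_x$ of $\rho(g)$ with $o(x) = 2^d - 1$, I obtain $1 + 2^h \equiv 2^j \pmod{2^d - 1}$ for some $j \ge 1$. Reducing $h, j$ modulo $d = \mathrm{ord}_{2^d - 1}(2)$ to $h', j' \in \{0, 1, \ldots, d-1\}$, the congruence becomes $1 + 2^{h'} \equiv 2^{j'} \pmod{2^d - 1}$. The bound $|1 + 2^{h'} - 2^{j'}| \le 2^{d-1} < 2^d - 1$ (valid for $d \ge 2$) forces $1 + 2^{h'} = 2^{j'}$ as integers, so $h' = 0$ and $j' = 1$. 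Thus $d \mid h$; combined with $d \mid n$ and $\gcd(n, h) = k$, this yields $d \mid k$, contradicting the hypothesis.

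The main obstacle is producing the element $g$. In the generic case $n \ne 6$, the choice of $K$ in Theorem \ref{2pairclass} is a Zsigmondy prime divisor of $2^n - 1$, so the order of $2$ modulo $|K|$ equals $n$ and the image of $\rho(K)$ in $A_2 \cong F^\times$ is generated by a primitive element of $F$ over $Z_2$ whose Galois stabilizer is trivial. Since $\rho$ is injective on $N_G(K)$ (its kernel is $C_G(M) \cap N_G(K) \le N \cap N_G(K) = 1$ by Theorem \ref{2pairclass}(c) and (f)), and since $|U| = 2^d - 1$ is odd while $A_1$ is a $2$-group, $\rho(U)$ injects into $\mathrm{Aut}(M)/A_1 \cong A_2 A_3$ and lands in the centralizer of the image of $\rho(K)$, which by the preceding observation equals $A_2$. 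So $\rho(U)$, and hence $U$, is cyclic of order $2^d - 1$, and I take $g$ to be any generator.

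In the exceptional case $n = 6$, we have $k = 2$ and $d \in \{3, 6\}$, so $|U| \in \{7, 63\}$; either way $U$ contains a cyclic Sylow $7$-subgroup (the Sylow $7$ of $\rho(U)$ lies in the unique Sylow $7$ of $A_2 \cong F_{64}^\times$, which is of order $7$). I let $g$ be a generator of this Sylow $7$, so the $n = 6$ hypothesis of Lemma \ref{nontrivcent} (which requires $g$ of order $7$) is met. Corollary \ref{numcond} then gives $1 + 2^h \equiv 2^j \pmod 7$, which reduced modulo $3 = \mathrm{ord}_7(2)$ becomes $1 + 2^{h'} \equiv 2^{j'} \pmod 7$ with $h', j' \in \{0, 1, 2\}$; enumerating $2^{h'} + 1 \in \{2, 3, 5\}$ against $2^{j'} \in \{1, 2, 4\}$ shows only $(h', j') = (0, 1)$ is admissible, so $3 \mid h$. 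This contradicts $\gcd(6, h) = k = 2$.
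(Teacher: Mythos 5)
Your proposal is correct and follows essentially the same route as the paper: it combines Lemma \ref{nontrivcent} (forcing $C_{A_1}(\rho(g))>1$ with $\rho(g)$ conjugate to some $\phi_x$) with Corollary \ref{numcond}, reduces the resulting congruence $2^h+1\equiv 2^j \pmod{2^d-1}$ modulo $d$ and uses $\gcd(n,h)=k$ to contradict $d\nmid k$, exactly as the paper does (the paper phrases the number theory contrapositively, starting from $d\nmid h$). Your extra step showing the subgroup $U$ is cyclic (via its image landing in $A_2$), so that an element of order $2^d-1$, or of order $7$ when $n=6$, actually exists, is a small additional care the paper passes over by working directly with elements, and it is fine.
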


\begin{proof}

Suppose now that $d$ divides $n$ and does not divide $k$.  (We note that when $n = 6$, it suffices to assume that $d = 3$ since we must have $l = 3$ and $k = 2$ implies that $d = 3$ or $d = 6$.  However, if we can prove that $C_G (K)$ has no element of order $2^3 - 1 = 7$, then it will contain no element of order $2^ - 1 = 63$.)  We have ${\rm gcd} (h,n) = k$, so $d$ does not divide $h$. Observe that $d$ is the order of $2$ modulo $2^d - 1$.  It follows that $2^h \cong 2^{h'} ~({\rm mod}~2^d - 1)$ where $h'$ is the remainder upon dividing $h$ by $d$, and so $1 \le h' < d$.  Now, $2^{h'} + 1$ will not be congruent to $2^j$ modulo $2^d - 1$ for all integers $j$ with $0 \le j < d$ since both $2^{h'} + 1$ and $2^j$ are less than $2^d - 1$ and not equal.  We conclude that $2^h + 1$ is not congruent modulo $2^d - 1$ to any power of $2$.

Suppose $C_G (K)$ contains an element $g$ of order $2^d - 1$. By Lemma \ref{nontrivcent}, $\rho (g)$ is conjugate to $\phi_x$ for some element $x \in F$ and $C_{A_1} (\rho (g)) > 1$.  (If $n = 6$, then assumption $d = 3$ implies that $g$ has order $7$, so the hypotheses of Lemma \ref{nontrivcent} are met in this case.)  Since $x$ must have order $2^d - 1$, we have by Corollary \ref{numcond} that $C_{A_1} (\phi_x) = 1$.  Since $C_{A_1} (\rho (g))$ and $C_{A_1} (\phi_x)$ are congruent, this is a contradiction.
\end{proof}

Combining Lemma \ref{prime pow} with Theorem \ref{2pairclass} and Corollary \ref{nonex}, we obtain the conclusion.  This is Theorem 2 when $p = 2$ and $G$ is solvable, and thus, it completes the proof of Theorem 2.

\begin{corollary}
Let $(G,N)$ be a $2$-Gagola pair with $G$ solvable.  Then $|G:N|_2 \ge |N|^2$.
\end{corollary}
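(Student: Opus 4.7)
\medskip

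The plan is to argue by contradiction using Theorem \ref{2pairclass}: if conclusion (1) holds we are done, so we may assume conclusion (2). From that conclusion we have the Suzuki structure $M = [\mathbf{O}_2(G),K]$ with $M \cong A(n,\Theta)$, where $|N| = 2^n$ and $\Theta$ has odd order $l \ge 3$. Writing $n = kl$, Lemma \ref{aff} tells us that $2$ also divides $n$, and by conclusion (2)(g) the Hall $2$-complement $H$ of $G$ embeds faithfully into $\Gamma(N)$ (since $MH$ is Frobenius), with image of order $2^n - 1$ acting transitively on $N \setminus \{1\}$.

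Pick any prime divisor $r$ of $l$ and set $d = r^{v_r(n)}$. Then $d$ is a nontrivial prime power with $d \mid n$, and since $v_r(n) = v_r(k) + v_r(l) > v_r(k)$ we have $d \nmid k$. Apply Lemma \ref{prime pow} to the image of $H$ in $\Gamma(N)$: this produces an element $y \in H \cap \Gamma_o(N)$ of order $2^d - 1$.

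I would next show $y \in C_G(K)$, which splits into two cases matching the two possibilities in conclusion (2)(d). In the generic case $|K| = q$ is a Zsigmondy prime divisor of $2^n - 1$, so $q \equiv 1 \pmod{n}$ gives $q > n = |\Gamma(N){:}\Gamma_o(N)|$; hence the image of $K$ lies in $\Gamma_o(N)$, and since $\Gamma_o(N)$ is cyclic (abelian), $y$ centralizes $K$. In the exceptional case $|N| = 2^6$ with $|K| = 3$, the Sylow $7$-subgroup of $H$ is the unique subgroup of order $7$, it automatically lies in $\Gamma_o(N)$, and the conjugation action of $K$ on it factors through $\mathrm{Aut}(\mathbb{Z}/7\mathbb{Z}) \cong \mathbb{Z}/6\mathbb{Z}$, which contains no element of order $3$; hence $K$ centralizes the Sylow $7$-subgroup and in particular centralizes $y$.

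With $y \in C_G(K)$ of order $2^d - 1$ and $d \mid n$, $d \nmid k$, Corollary \ref{nonex} yields the contradiction that $C_G(K)$ contains no subgroup of order $2^d - 1$. Thus conclusion (2) of Theorem \ref{2pairclass} is impossible, leaving only conclusion (1), namely $|G{:}N|_2 \ge |N|^2$. The main obstacle is verifying that $y$ centralizes $K$: the generic case is an immediate consequence of Zsigmondy's size bound together with the abelianness of $\Gamma_o(N)$, but the $|N| = 2^6$ case requires the separate small-$\mathrm{Aut}$ observation, since no Zsigmondy prime is available to force $K$ into $\Gamma_o(N)$.
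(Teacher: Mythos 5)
Your overall strategy is the paper's: assume conclusion (2) of Theorem \ref{2pairclass}, take $d$ to be the $r$-part of $n$ for a prime $r$ dividing the order $l$ of $\Theta$ (so $d \mid n$, $d \nmid k$), use Lemma \ref{prime pow} to produce $y \in H \cap \Gamma_o(N)$ of order $2^d-1$, show $y$ centralizes $K$, and contradict Corollary \ref{nonex}. The generic case is fine: for a Zsigmondy prime $q$ of $2^n-1$ one has $n \mid q-1$, so $q \nmid n$ and the image of $K$ falls into the abelian group $\Gamma_o(N)$, whence $y$ centralizes $K$ (using that $H \cap {\bf O}_2(G)=1$, so commuting images give commuting elements of $H$). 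However, your exceptional case $|N| = 2^6$ contains a genuine error: you assert that ${\rm Aut}(\mathbb{Z}/7\mathbb{Z}) \cong \mathbb{Z}/6\mathbb{Z}$ ``contains no element of order $3$.'' It does --- the map $x \mapsto x^2$ has order $3$, which is exactly why the nonabelian Frobenius group of order $21$ exists --- so a group of order $3$ can act nontrivially on a group of order $7$, and your deduction that $K$ centralizes the Sylow $7$-subgroup collapses as written. Since the whole point of the case split is that no Zsigmondy prime is available when $n = 6$, this case cannot be waved away.

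The fact you need is nevertheless true, but it requires an actual argument. One fix: $H$ is a Frobenius complement of order $63$ and a Z-group, so by Lemma \ref{Z-group} its Sylow $3$-subgroup is cyclic of order $9$ and $K$ is its unique subgroup of order $3$, generated by $z^3$ for a generator $z$ of that Sylow $3$-subgroup; the image of $z$ in ${\rm Aut}(\langle y \rangle) \cong \mathbb{Z}/6\mathbb{Z}$ has odd order, hence order dividing $3$, so the image of $z^3$ is trivial and $K$ does centralize $\langle y \rangle$. The paper avoids the case split altogether: it shows $\Gamma_o(N) \le C_{\Gamma(N)}(\rho(K))$, i.e.\ $\rho(K) \le \Gamma_o(N)$, in both cases (for $n = 6$ because $\rho(K)$ is the unique order-$3$ subgroup of the cyclic Sylow $3$-subgroup of $\rho(H)$, which must meet $\Gamma_o(N)$), and then $\rho(H) \cap \Gamma_o(N) \le \rho(C_H(K))$ handles $y$ uniformly. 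A smaller quibble: the embedding of $H$ into $\Gamma(N)$ is not a consequence of ``$MH$ is Frobenius''; it comes from Lemma \ref{aff} (giving $G/{\bf O}_2(G) \le \Gamma(N)$) together with $H \cap {\bf O}_2(G) = 1$, and you should say so, since Lemma \ref{prime pow} is applied to that image.
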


\begin{proof}
If we do not have the conclusion, then $(G,N)$ satisfies conclusion 2 of Theorem \ref{2pairclass}.  We use the notation from there.  Let $l$ be the order of $\Theta$.  Let $p$ be a prime divisor of $l$, and take $d = n_p$.  Thus, $d$ divides $n$, but $d$ will not divide $n/l$.  We apply Corollary \ref{nonex} to see that $C_G (K)$ contains no element of order $2^d - 1$.  On the other hand, we know by Lemma \ref{aff} that $G/{\bf O}_2 (G)$ is isomorphic to a subgroup of $\Gamma (N)$. Let $\rho$ be the map from $G$ to $\Gamma (N)$.  It is not difficult to see $\Gamma_o (N) \le C_{\Gamma (N)} (\rho (K))$, so $\rho (H) \cap \Gamma_o (N) \le C_{\rho (H)} (\rho (K)) = \rho (C_H (K))$.  By Lemma \ref{prime pow}, we know that $\rho (H) \cap \Gamma_o (N)$ contains an element of order $2^d - 1$.  This implies that $C_H (K) {\bf O}_2 (G)/{\bf O}_2 (G) \cong C_H (K)$ contains an element of order $2^d - 1$, and so, $C_G (K)$ contains an element of order $2^d - 1$.  We now have a contradiction.
\end{proof}

\end{document}